\newcommand{\beq}{\begin{equation}}
\newcommand{\eeq}{\end{equation}}
\newcommand{\bea}{\begin{eqnarray}}
\newcommand{\eea}{\end{eqnarray}}
\newcommand{\beas}{\begin{eqnarray*}}
\newcommand{\eeas}{\end{eqnarray*}}
\newtheorem{theorem}{Theorem}[section]
\newtheorem{proposition}[theorem]{Proposition}
\newtheorem{prop}[theorem]{Proposition}
\newtheorem{corollary}[theorem]{Corollary}
\newtheorem{lemma}[theorem]{Lemma}
\theoremstyle{definition}
\newtheorem{definition}[theorem]{Definition}
\newtheorem{example}[theorem]{Example}
\newtheorem{assumption}[theorem]{Assumption}
\newtheorem{remark}[theorem]{Remark}
\newtheorem{examples}[theorem]{Examples}
\newtheorem*{acknowledgement}{Acknowledgement}
\newcommand{\R}{\mathbb R}
\newcommand{\cS}{\mathcal S}
\newcommand{\tth}{\tilde{\theta}}
\newcommand{\x}{\mathbf{x}}
\newcommand{\y}{\mathbf{y}}
\newcommand{\z}{\mathbf{z}}
\newcommand{\B}{\mathbf{B}}
\newcommand{\tmu}{\tilde{\mu}}
\newcommand{\tce}{\tilde{\mathcal{E}}}
\newcommand{\tL}{\tilde{\mathcal{L}}}
\newcommand{\tlam}{{\lambda}'}
\newcommand{\tq}{\tilde{q}}
\newcommand{\tp}{\tilde{p}}
\newcommand{\tP}{\tilde{P}}
\newcommand{\tE}{\tilde{E}}
\begin{document}

\title[Rigidity of the subelliptic heat kernel]{Rigidity of the subelliptic heat kernel on $\operatorname{SU}\left( 2 \right)$}
\author[Gordina]{Maria Gordina{$^{\dagger}$}}
\thanks{\footnotemark {$\dagger$} This research was supported in part by NSF Grant DMS-1954264.}
\address{Department of Mathematics\\
University of Connecticut\\
Storrs, CT 06269, USA} \email{maria.gordina@uconn.edu}
\author[Wang]{Jing Wang{$^{\ddag}$}}
\thanks{\footnotemark {$\ddag$} Research was supported in part by NSF Grant DMS-2246817.}
\address{Department of Mathematics\\
Purdue University\\
West Lafayette, IN 47907, USA} \email{jingwang@purdue.edu}

\keywords{Heat kernel rigidity, Hopf fibration, metric measure space, bundle isometry}

\subjclass{35K08, 31C25, 53C23, 53C17 }

\begin{abstract}
We study heat kernel rigidity for the Lie group $\operatorname{SU}\left( 2 \right)$ kernel equipped with a sub-Riemannian structure. We prove that a metric measure space equipped with a heat kernel of a special form is bundle-isometric to the Hopf fibration $\operatorname{U}\left( 1 \right)\to \operatorname{SU}\left( 2 \right)\to \mathbb{CP}^1$, which coincides with the sub-Riemannian sphere $\operatorname{SU}\left( 2 \right)$.
\end{abstract}

\maketitle

\tableofcontents

\section{Introduction}

Rigidity and stability problems have been studied both in analysis and geometry in different settings. One of the motivations is the fundamental question of how one can characterize the underlying geometry based on the observed transition density of a diffusion process on the space. This goes back to the classical question \emph{Can one hear the shape of a drum?} by M.~Kac in \cite{Kac1966}. Such questions have practical applications in shape matching \cite{BronsteinABronsteinMimmelMahmoudiSapiro} and segmentation \cite{De_GoesGoldensteinVelho2008}, as heat kernels are invariant under deformations in terms of isometric maps and preserve all of the intrinsic information about the shape. Applied analysts have developed various tools such as the heat kernel signature \cite{SunOvsjanikovGuibas2009}, diffusion maps \cite{CoifmanLafon2006}, eigenfunctions embedding \cite{BelkinNiyogi2001}. 

There have been many classical and new results addressing such rigidity problems in the Riemannian setting connecting geometry and properties of heat kernels, including Colding's rigidity \cite{colding1997} with a new proof given in the recent paper by Carron and Tewodrose \cite{CarronTewodrose2022}.  However, to the authors' knowledge, no results are available addressing heat kernel rigidity in the sub-Riemannian setting. A sub-Riemannian manifold is a smooth  manifold $M$ equipped with a fiberwise inner product on a sub-bundle $\mathcal{H}$ of its tangent bundle which is bracket-generating. There is a genuine distance $d_{cc}$ on $M$ given by the smallest length of all absolutely continuous curves that are tangential to $\mathcal{H}$. This distance is called a sub-Riemannian or Carnot-Carath\'eodory distance. Owing to its inherent degeneracy, sub-Riemannian geometry provides a rich body of structures, while notably its geometric aspects are considerably rougher than Riemannian manifolds. At the same time, it is a natural setting for the geometric control theory. The aim of our paper is to prove a heat kernel rigidity result for  the $3$-dimensional compact model space $S^3$ equipped with a sub-Riemannian structure, also known as the Hopf fibration, from a given transition density of a diffusion on it.  

It is often more convenient to describe the sub-Riemannian structure on $S^3$ using its identification with the Lie group $\operatorname{SU}(2)$, that is, the group of $2\times 2$ complex unitary matrices of the determinant $1$. A basis of its Lie algebra $\mathfrak{su}(2)$ is given by
 \[
\text{ }X=\left(
\begin{array}{cc}
~0~ & ~1~ \\
-1~& ~0~
\end{array}
\right) ,\text{ }Y=\left(
\begin{array}{cc}
~0~ & ~i~ \\
~i~ & ~0~
\end{array}
\right),
Z=\left(
\begin{array}{cc}
~i~ & ~0~ \\
~0~ & -i~
\end{array}
\right),
\]
which satisfy the bracket generating relations
\[
[Z, X]=2Y,\quad [X,Y]=2Z,\quad [Y,Z]=2X.
\]
We use the same notation $X, Y, Z$ for the corresponding left-invariant vector fields on $\operatorname{SU}(2)$. The sub-Riemannian structure is then given by the sub-bundle $\mathcal{H}(\operatorname{SU}(2))=\{\mathcal{H}_x, x\in \operatorname{SU}(2) \}$, where $\mathcal{H}_x=\operatorname{Span}\{X(x), Y(x)\}$, and a metric on $\mathcal{H}(\operatorname{SU}(2))$ is such that $X, Y$ are orthonormal. Then the operator 
\[
L:=X^2+Y^2
\]
is hypoelliptic by \cite{Hormander1967a}, often called a sub-Laplacian on $\operatorname{SU}(2)$. Let $P_t:=e^{tL}$ be the heat semigroup generated by $L$, and by $p_t$ we denote its (smooth) heat kernel. The explicit expression for $p_t$ has been obtained by Bonnefont-Baudoin in \cite{BaudoinBonnefont2009}. Using the cylindrical coordinates
 
\[
(r,\theta, z) \longrightarrow \exp \left(r \cos \theta X +r \sin \theta Y \right) \exp (z Z)
\] 
they show that the sub-elliptic heat kernel on $\operatorname{SU}\left( 2 \right)$ for the semigroup starting from the identity is given by
\begin{equation}\label{eq-kernel-1}
p_t(r,\theta)=\sum_{n=-\infty}^{\infty} \sum_{k=0}^\infty(2k+|n|+1)e^{-(4k(k+|n|+1)+2|n|)t}e^{in\theta}(\cos r)^{|n|}P_k^{0, |n|}(\cos 2r)
\end{equation}
for $t>0$, $0\leqslant r < \frac{\pi}{2}$ and $\theta \in [-\pi,\pi]$, where
\[
P_k^{0,|n|}(x)=\frac{(-1)^k}{2^kk!(1+x)^{|n|}}\frac{d^k}{dx^k}\left( (1+x)^{|n|}(1-x^2)^k \right)
\]
is the Jacobi polynomial.

In this paper, our aim is to show that if a Dirichlet metric measure space admits a heat kernel taking the form of \eqref{eq-kernel-1}, then this space can be identified with the sphere $S^3$ equipped with a  sub-Riemannian structure. There are several challenges compared to the Riemannian setting. First, unlike in \cite{CarronTewodrose2022} a sub-Riemannian distance, such as the Carnot-Carath\'eodory distance $d_{cc}$, does not appear in the heat kernel expression \eqref{eq-kernel-1}. Instead we need to work with two semi-metric $r$ and $\theta$. We should note that the connection between $d_{cc}$ and $(r,\theta)$ has been established in \cite{BaudoinBonnefont2009}, however this connection is not explicit. Another obvious obstacle is that to identify a sub-Riemannian structure we need to show the existence of a bracket-generating sub-bundle of the tangent bundle. However, starting with a general metric measure space, we do not have a smooth structure a priori.  Unlike Riemannian manifolds, we do not expect sub-Riemannian manifolds to be embeddible as metric spaces into Euclidean spaces. 
For example, Semmes in \cite{Semmes1996a} used the Heisenberg group as an example of a doubling metric space which is not bi-Lipschitz equivalent to a subset of any Euclidean space. Therefore finding isometry maps directly is not possible. 
The main idea we exploit is to take advantage on the connection between $\operatorname{SU}(2)$ equipped with a sub-Riemannian structure and the Hopf fibration
\[
S^1 \longrightarrow  \operatorname{SU}(2) \longrightarrow \mathbb{CP}^1,
\] 
where $S^1$ is the unit circle and $\mathbb{CP}^1$ is the complex projective space that can be identified with a $2$-dimensional sphere of radius $\frac12$. Escobales in \cite{Escobales1975} characterized the Hopf fibration $S^1 \longrightarrow \operatorname{SU}(2) \longrightarrow \mathbb{C}P^1$ as the unique Riemannian submersion of $S^3$ with totally geodesic fibers.  It is known that such a Riemannian submersion induces the sub-Riemannian structure on $\operatorname{SU}(2)$.

Our strategy is to start with a space $M$ with semi-metrics $r$ and $\theta$, and a Borel measure $\mu$. We first use $r$ and $\theta$ to induce a metric $\delta$. Then by defining the equivalence relation $x\sim y$ as $r(x,y)=0$, we obtain the  quotient space denoted as $\B:=M/\sim$. 

We then reduce the problem to finding bundle isometries $(\Phi, \Psi)$ from $\pi$ to $\Pi$ such that the following diagram below commutes. 

\begin{equation}
\begin{tikzcd}\label{eq-diagram}
  (M,\delta) \arrow[r, "\Phi"] \arrow[d, "\pi"]
    &  (\cS^3, d_{\cS^3})  \arrow[d, "\Pi"] \\
   (\B,r) \arrow[r, "\Psi"]
&  (\cS^2,d_{\cS^2}) 
\end{tikzcd}
\end{equation}
Here $(\cS^3, d_{\cS^3})$ and $(\cS^2,d_{\cS^2})$ are essentially the $\operatorname{SU}(2)$ and $\mathbb{C}P^1$ equipped with the standard Riemannian metrics, and $\Pi$ is a Riemannian submersion from $(\cS^3,d_{\cS^3})$ to $(\cS^2,d_{\cS^2})$ with totally geodesic fibers.

The rest of this paper is organized as follows. In Section~\ref{sec-setting} we introduce the setting and preliminaries for the metric measure spaces. In Section~\ref{sec-isom-1} we define the first isometry $\Phi:  (M,\delta)\to (\cS^3, d_{\cS^3})$. In Section~\ref{sec-isom-2} we introduce the second isometry $\Psi: (\B,r) \to  (\cS^2,d_{\cS^2})$. Lastly in Section~\ref{sec-Riem-sub} we show that $\Pi: (\cS^3, d_{\cS^3})\to (\cS^2, d_{\cS^2})$ is a sub-Riemannian submersion with totally geodesic fibers.

\section{Preliminaries}\label{sec-setting}

\subsection{Dirichlet forms, semigroups and heat kernels}\label{sec-Diri-M} 
Suppose a metric space $\left( X, d \right)$ is equipped with a non-negative regular $\sigma$-finite Borel measure $\mu$ on the Borel measurable space $\left( X, \mathcal{B}\left( X \right) \right)$. We call the triple $\left( X, d, \mu \right)$ a \emph{metric measure space}.  
We describe briefly the one-to-one correspondence between Markov semigroups, Dirichlet forms and heat kernels on $L^{2}\left( X, \mu \right)$. We refer to monographs \cite{ChenFukushimaBook2012, FukushimaOshimaTakedaBook2011} for more details, though in our exposition we rely on \cite{Grigor'yan2010a}.

First recall how (abstract) heat kernels can be defined on a metric measure space, see e.~g   \cite[Section~2.1]{Grigor'yan2010a}.

\begin{definition}[Abstract heat kernels]\label{d.HeatKernel} We call a family $\left\{ p_{t}\left( x, y \right) \right\}_{t>0}$ of measurable functions $p_{t}\left( x, y \right)$ on $X \times X$ 
a \emph{heat kernel} if the following conditions are satisfied, for $\mu$-almost all $x, y \in  M$ and all $s, t > 0$.
\begin{itemize}
\item[(i)]\label{i.positive} Positivity $p_{t}\left( x, y \right)  \geqslant 0$;

\item[(ii)]\label{i.symmetry} Symmetry $p_{t}\left( x, y \right)=p_{t}\left( y, x \right)$;

\item[(iii)]\label{i.conservative} Stochastic completeness (conservative heat kernel): for $\mu$-almost all $x \in  X$  and all $t > 0$ 
\[
\int_{X} p_{t}\left( x, y \right) d\mu(y)= 1;
\]
\item[(iv)]\label{i.semigroup} The semigroup property (Chapman-Kolmogorov inequalities)
\[
p_{s+t}\left( x, y \right)=\int_{X} p_{s}\left( x, z \right)p_{t}\left( z, y \right)d\mu(z);
\]
\item[(v)]\label{i.approximation} Approximation of identity:  for all $f \in L^{2}\left( X, \mu \right)$
\[
\int_{X} f\left( y \right) p_{s}\left( x, y \right)d\mu(y)\xrightarrow[{t\to 0+}]{L^2} f(x). 
\]
\end{itemize}
\end{definition}

There is one-to one correspondence between such heat kernels on metric measure spaces and heat (convolution) semigroup which are strongly continuous symmetric Markovian semigroups on $L^{2}\left( X, \mu \right)$. Namely, any heat kernel defines a heat semigroup $\left\{ P_{t} \right\}_{t \geqslant 0}$ where $P_{0}=\operatorname{Id}$ and $P_{t}, t >0$ is an operator in on $L^{2}\left( X, \mu \right)$ defined by
\[
P_{t}f\left( x \right) = \int_{X} f\left( y \right) p_{t}\left( x, y \right)d\mu(y),\quad  f \in L^{2}\left( X, \mu \right).
\]
Then by properties of the heat kernel in Definition~\ref{d.HeatKernel} we see that $P_{t}$ is Markovian semigroup, it is a bounded operator on $L^{2}\left( X, \mu \right)$. In addition, it is a contraction operator which is symmetric and, therefore, self-adjoint. Thus $\left\{P_{t} \right\}_{t \geqslant 0}$ is a strongly continuous symmetric Markovian semigroup on $L^{2}\left( X, \mu \right)$. 

For a strongly continuous symmetric Markovian semigroup $P_{t}$ on $L^{2}\left( X, \mu \right)$, we can define the \emph{infinitesimal generator} $L$ of the semigroup $P_{t}$ 
\[
Lf:= \lim\limits_{t \to 0^{+}}\frac{P_{t}f-f}{t},
\]
where the limit is taken in the $L^{2}$-norm. The domain $\mathcal{D}_{L}$ of the generator
$L$ are all functions $f \in L^{2}\left( X, \mu \right)$ for which the limit above exists. Then by the Hille-Yosida theorem, the operator $L$ is densely defined. Moreover, $L$ is a self-adjoint negative-definite operator, since the semigroup $P_{t}$ is self-adjoint
and Markovian. 

Let $\mathcal{E}$ be the associated Dirichlet form defined by 
\[
\mathcal{E}(f,g):= \lim\limits_{t \to 0^{+}}\left(\int_X \frac{P_{t}f-f}{t} g(x)d\mu(x)\right),
\] 
for all $f, g \in  L^2\left( X, \mu \right)$ for which such limit exists. Define

\[
\mathcal{D}_{\mathcal{E}}:=\left\{ f \in L^{2}\left( X, \mu \right): \mathcal{E}(f,f) < \infty\right\},
\]
then $\left( \mathcal{E}, \mathcal{D}_{\mathcal{E}}\right)$ is a Dirichlet form, and $\mathcal{D}_{\mathcal{E}}=\mathcal{D}_{L}$.

Recall that a \emph{Dirichlet form} $\mathcal{E}$ on $\left( X, d \right)$ is a positive-definite symmetric bilinear densely defined closed form $\mathcal{E}: \mathcal{D}_{\mathcal{E}}  \times \mathcal{D}_{\mathcal{E}}  \longrightarrow \mathbb{R}$, where $\mathcal{D}_{\mathcal{E}}$ is a dense subset of $L^{2}\left( X, \mu \right)$, and $\mathcal{E}$ is a closed Markovian symmetric form. This means that $\mathcal{D}_{\mathcal{E}}$ is a Hilbert space with respect to the inner product

\[
\langle f, g \rangle:=\int_{X} fg d\mu+\mathcal{E}\left( f, g \right), f, g \in \mathcal{D}_{\mathcal{E}},
\]
the \emph{Markov property}  of $\mathcal{E}$ means that for any $f \in \mathcal{E}$ we have $\min\left\{ \max\left\{0, f \right\}, 1 \right\} \in \mathcal{E}$ and $\mathcal{E}\left( \min\left\{ \max\left\{0, f \right\}, 1 \right\}, \min\left\{ \max\left\{0, f \right\}, 1 \right\} \right) \leqslant \mathcal{E}\left( f, f \right)$.

A Dirichlet form $\left( \mathcal{E}, \mathcal{D}_{\mathcal{E}}\right)$ is called \emph{regular} if it admits a core, that is, if there exists a subset $\mathcal{C}$ of $C_{0}\left( X \right) \bigcap \mathcal{E}$ that is dense both in $\mathcal{E}$ with respect to the norm induced by the inner product on $\mathcal{E}$, and in $C_{0}\left( X \right)$ with respect to the sup-norm. 

The form $\left( \mathcal{E}, \mathcal{D}_{\mathcal{E}}\right)$ is called \emph{local} if $ \mathcal{E}\left( f, g \right) = 0$ whenever $f, g \in \mathcal{D}_{\mathcal{E}}$ have compact disjoint supports, and it is called \emph{strongly local} if
$ \mathcal{E}\left( f, g \right) = 0$  whenever $f, g \in \mathcal{D}_{\mathcal{E}}$ have compact supports and $f\equiv \text{ const }$ in an open neighborhood of $\operatorname{supp} g$.

\subsection{Assumptions}
In this section we impose necessary assumptions for setting up the problem. Given a set $M$, we would like to equip it with two pseudo-metrics. Let us first recall the definition of a pseudo-metric.

\begin{definition}
A \emph{pseudo-metric} $r$ on $M$ is a non-negative function $r: M\times M \to \R_{\geqslant 0}$ such that for all $x, y, z\in M$

\begin{itemize}
\item[1.] $r(x,x)=0$, 
\item[2.] $r(x,y)=r(y,x)$,
\item[3.] $r(x,y)+r(y,z)\geqslant r(x,z)$.
\end{itemize}
\end{definition}
We then impose the following assumption. 

\begin{assumption}\label{Assump1}
Suppose that there are two pseudo-metrics $r\in[0,\pi/2)$ and $\theta\in[0,\pi)$ on $M$ such that $r(x,y)=\theta(x,y)=0$ implies that $x=y$, and suppose that $M$ is complete with respect to both $r$ and $\theta$.
\end{assumption}

\begin{assumption}[Assumptions on the underlying space]\label{Assump2}
We suppose that $\left( M, r, \theta \right)$ is a topological space equipped with a non-negative regular $\sigma$-finite Borel measure $\mu$ on $\left( M, \mathcal{B}\left( M \right) \right)$. 
\end{assumption}

\begin{assumption}\label{Ass.HeatKernel}(Heat kernel) Suppose  $\left( M, r, \theta, \mu \right)$ satisfies the Assumption \ref{Assump1} and  \ref{Assump2}. We assume that $p_t(x,y)$ is a heat kernel on $M$ taking the form $p_t(x,y)=p(t, r(x,y),\theta(x,y))$,  where
\begin{equation}\label{eq-kernel}
p(t, r,\theta)=\sum_{n=-\infty}^{\infty} \sum_{k=0}^\infty(2k+|n|+1)e^{-(4k(k+|n|+1)+2|n|)t}e^{in\theta}(\cos r)^{|n|}P_k^{0, |n|}(\cos 2r),
\end{equation}
and 
\[
P_k^{0,|n|}(x)=\frac{(-1)^k}{2^kk!(1+x)^{|n|}}\frac{d^k}{dx^k}\left( (1+x)^{|n|}(1-x^2)^k \right)
\]
are Jacobi polynomials.
\end{assumption}

We start by describing properties of the heat kernel $p_{t}\left( x, y \right)$ based on Equation~\eqref{eq-kernel}. Note that from symmetry of pseudo-metrics the heat kernel $p_{t}\left( x, y \right)$ is indeed symmetric. 

\begin{prop}[Properties of the heat kernel]\label{p.HeatKernel}
Suppose  $\left( M, r, \theta, \mu \right)$ satisfies Assumption \ref{Ass.HeatKernel}. Then the measure $\mu$ is a probability measure, and the heat kernel $p_{t}\left( x, y \right)$ is in $L^{2}\left( M \times M, \mu \times \mu \right)$. 
\end{prop}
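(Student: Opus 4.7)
The plan is to combine the explicit series representation \eqref{eq-kernel} with the heat-kernel axioms in Definition~\ref{d.HeatKernel}. I would first extract a uniform long-time asymptotic of $p(t, r, \theta)$ to the constant $1$, then invoke stochastic completeness to pin down $\mu(M) = 1$, and finally use Chapman--Kolmogorov together with symmetry to read off the $L^2$ norm of $p_t$.

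\emph{Step 1 (uniform large-time limit).} In \eqref{eq-kernel} the single term with $(n, k) = (0, 0)$ contributes the constant $1$ (since $P_0^{0,0} \equiv 1$), while every other term carries an exponential factor $e^{-\lambda_{n, k} t}$ with $\lambda_{n, k} := 4 k (k + |n| + 1) + 2|n| \geq 2$ for $(n, k) \neq (0, 0)$. Using the trivial bound
\[
\bigl|e^{i n \theta} (\cos r)^{|n|} P_k^{0, |n|}(\cos 2 r)\bigr| \leq P_k^{0, |n|}(1),
\]
the remainder after the $(0,0)$-term is dominated for $t > t_0 > 0$ by
\[
e^{-2(t - t_0)} \sum_{(n, k) \neq (0, 0)} (2k + |n| + 1) e^{-\lambda_{n, k} t_0} P_k^{0, |n|}(1) = e^{-2(t - t_0)} \bigl( p(t_0, 0, 0) - 1 \bigr).
\]
Since $p_{t_0}(x, x) = p(t_0, 0, 0)$ is a finite real number by Assumption~\ref{Ass.HeatKernel}, this gives $p(t, r, \theta) \to 1$ uniformly in $(r, \theta)$ as $t \to \infty$.

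\emph{Step 2 ($\mu(M) = 1$).} Choose $T$ so large that $p_T(x, y) \geq 1/2$ for all $x, y$, using Step~1. Stochastic completeness (iii) of Definition~\ref{d.HeatKernel} then yields
\[
1 = \int_M p_T(x, y) \, d\mu(y) \geq \tfrac{1}{2} \mu(M),
\]
so that $\mu$ is finite. Since $p_t$ is uniformly bounded for $t \geq T$, dominated convergence combined once more with (iii) yields $\mu(M) = \lim_{t \to \infty} \int_M p_t(x, y)\, d\mu(y) = 1$.

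\emph{Step 3 ($L^2$-integrability).} By the semigroup property (iv) and symmetry (ii) in Definition~\ref{d.HeatKernel},
\[
\int_M p_t(x, y)^2 \, d\mu(y) = p_{2t}(x, x) = p(2t, 0, 0),
\]
which is finite and independent of $x$. Integrating once more against $\mu$ and using $\mu(M) = 1$ from Step~2 gives $\|p_t\|_{L^2(M \times M, \mu \times \mu)}^2 = p(2t, 0, 0) < \infty$. The main technical point is Step~1: even though the double series in \eqref{eq-kernel} has obvious term-by-term decay, a small amount of care is needed to factor out the uniform exponential $e^{-2(t - t_0)}$ in order to reduce the tail to the a priori convergence of the diagonal kernel at a single reference time $t_0 > 0$.
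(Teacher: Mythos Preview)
Your argument follows the same three-step strategy as the paper's own proof: establish the uniform limit $p(t,r,\theta)\to 1$ as $t\to\infty$, deduce $\mu(M)=1$ from the heat-kernel axioms, and then read off the $L^{2}$ norm via Chapman--Kolmogorov and symmetry.

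One caveat in Step~1: the inequality $(\cos r)^{|n|}\,\bigl|P_k^{0,|n|}(\cos 2r)\bigr|\leq P_k^{0,|n|}(1)=1$ is correct but not ``trivial''. Without the weight $(\cos r)^{|n|}$ it is false, since for $|n|\geq 1$ one has $\max_{[-1,1]}|P_k^{0,|n|}|=\binom{k+|n|}{k}$, attained at $x=-1$. The weighted bound you use is the statement that the diagonal Wigner $d$-matrix elements of $\operatorname{SU}(2)$ have modulus at most $1$, and it deserves a word of justification or a reference. The paper sidesteps this by invoking Szeg\"{o}'s asymptotic $\max_{[-1,1]}|P_k^{0,|n|}|\sim k^{|n|}$ instead, which is cruder but easier to cite.

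Your Step~2 is in fact cleaner than the paper's: by first extracting $\mu(M)\leq 2$ from stochastic completeness and the pointwise lower bound $p_T\geq 1/2$, you legitimize the subsequent appeal to dominated convergence. The paper applies DCT directly to the Chapman--Kolmogorov integral, which tacitly presupposes that the constant dominator lies in $L^{1}(\mu)$, i.e.\ that $\mu(M)<\infty$---precisely the conclusion being sought.
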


\begin{proof}
The symmetry of the heat kernel follows from symmetry of pseudo-metrics. 
Now we show that the series in Equation~\eqref{eq-kernel} converges absolutely for any $t>0$. Indeed, Jacobi polynomials satisfy

\[
\max_{x \in [-1, 1]}\left\vert P_k^{0, |n|}(\cos 2r) \right\vert \sim k^{\max\left\{ 0, |n| \right\}}=k^{|n|}, k \to \infty
\]
by \cite[Theorem~7.32.1]{SzegoBook1939} due to Kogbetliantz-S.~Bernstein-Szeg\"{o}.
Therefore

\[
\left\vert p(t, r,\theta) \right\vert\leqslant \sum_{n=-\infty}^{\infty} \sum_{k=0}^\infty(2k+|n|+1)e^{-(4k(k+|n|+1)+2|n|)t}\left\vert P_k^{0, |n|}(\cos 2r) \right\vert \xrightarrow[t \to \infty]{} 1,
\]
and then $p(t, r,\theta) \xrightarrow[t \to \infty]{} 1$. Using the semigroup property (Chapman-Kolmogorov equations), we see that

\begin{align*}
& \int_{M} p_{t/2}\left( x, z \right) p_{t/2}\left( z, y \right) d\mu\left( z \right)=p_{t}\left( x, y \right) \xrightarrow[t \to \infty]{} 1,
\end{align*}
therefore $\mu\left( X \right)=1 < \infty$ by the Dominated Convergence Theorem. Therefore by the symmetry of the heat kernel and the semigroup property (Chapman-Kolmogorov equations) we have

\begin{align*}
& \int_{M} \int_{M} p_{t}\left( x, y \right)  d\mu\left( y \right)d\mu\left( x \right)=\int_{X} p_{2t}\left( x, x \right) d\mu\left( x \right)
\\
& =\int_{M} \sum_{n=-\infty}^{\infty} \sum_{k=0}^\infty(2k+|n|+1)e^{-(4k(k+|n|+1)+2|n|)2t} d\mu\left( x \right) < \infty.
\end{align*}

\end{proof}

Next we address some properties of the spectrum  associated to the heat kernel given in Assumption \ref{Ass.HeatKernel}.
For $k,n \geqslant 0$ we denote 
\begin{align}\label{eq-lam-nk}
\lambda_{k,n}=-(4k(k+n+1)+2|n|)
\end{align}
 and
\begin{align}\label{eq-p-nk}
& p_{k, n}(x,y)=2(2k+n+1)\cos (n\theta(x,y))(\cos r(x,y))^nP_{k}^{0, n}(\cos 2r(x,y)), n \not=0, 
\\
& p_{0, 0}(x,y)=1. \notag
\end{align}
Then the heat kernel in Assumption~\ref{Ass.HeatKernel} can be written as 
\begin{equation}\label{eq-spec-decomp}
p_t(x,y)=\sum_{n=0}^{\infty} \sum_{k=0}^\infty e^{\lambda_{k,n}t}p_{k, n}(x,y),
\end{equation}
since the series converges absolutely.

\begin{lemma}\label{lemma-spectrum}
Suppose $\left( M, r, \theta, \mu \right)$ satisfies Assumption~\ref{Ass.HeatKernel} and let $L$ be the associated self-adjoint operator. Then $L$ has discrete spectrum $\{\lambda_{k,n}\}_{k, n\geqslant 0}$, where $\lambda_{k,n}$ is given as in \eqref{eq-lam-nk}. In particular, we have

\begin{equation}\label{eq-eign-fun}
Lp_{k, n}(x, \cdot)=\lambda_{k, n}p_{k, n}(x, \cdot).
\end{equation}

\end{lemma}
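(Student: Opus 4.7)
The plan is threefold: first I establish the discrete spectrum by recognizing $P_t$ as a compact self-adjoint operator, next I identify the spectrum with $\{\lambda_{k,n}\}$ by comparing two spectral decompositions of $p_t$, and finally I derive the eigenfunction relation \eqref{eq-eign-fun} by applying $P_s$ to each term of the series \eqref{eq-spec-decomp}.

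By Proposition~\ref{p.HeatKernel}, $p_t \in L^2(M \times M, \mu \times \mu)$, so the integral operator $P_t$ with kernel $p_t$ is Hilbert--Schmidt on $L^2(M, \mu)$; symmetry of $p_t$ makes it self-adjoint. The spectral theorem for compact self-adjoint operators gives a discrete real spectrum accumulating only at $0$, which via $P_t = e^{tL}$ translates into a discrete spectrum $\{\alpha_i\}$ of $L$ with $\alpha_i \to -\infty$, together with an orthonormal basis $\{\phi_i\}$ of $L^2(M, \mu)$ of $L$-eigenfunctions. Mercer's theorem then yields the $L^2$-convergent decomposition $p_t(x,y) = \sum_i e^{\alpha_i t} \phi_i(x) \phi_i(y)$. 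To match this with $\{\lambda_{k,n}\}$, I regroup \eqref{eq-spec-decomp} by distinct eigenvalues: with $\Lambda = \{\lambda_{k,n} : k, n \geqslant 0\}$ (as a set) and $q_\lambda(x,y) = \sum_{(k,n):\,\lambda_{k,n} = \lambda} p_{k,n}(x,y)$, one has $p_t(x,y) = \sum_{\lambda \in \Lambda} e^{\lambda t} q_\lambda(x,y)$ with distinct exponents. Uniqueness of exponential expansions with distinct exponents (from the absolute convergence established in the proof of Proposition~\ref{p.HeatKernel}) forces $\Lambda$ to coincide with $\{\alpha_i\}$, giving the first assertion.

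For the eigenfunction claim \eqref{eq-eign-fun}, I fix $x_0 \in M$ and apply $P_s$ to $p_t(x_0, \cdot) \in L^2(M, \mu)$. The Chapman--Kolmogorov relation gives $P_s p_t(x_0, \cdot) = p_{s+t}(x_0, \cdot)$, and interchanging the bounded operator $P_s$ with the absolutely convergent sum yields
\[
\sum_{k, n} e^{\lambda_{k,n} t}\, P_s p_{k,n}(x_0, \cdot) \;=\; \sum_{k, n} e^{\lambda_{k,n}(s+t)}\, p_{k,n}(x_0, \cdot)
\]
as an identity in $L^2(M, \mu)$ for every $t > 0$. Uniqueness of the exponential series in $t$ then gives $P_s p_{k,n}(x_0, \cdot) = e^{\lambda_{k,n} s}\, p_{k,n}(x_0, \cdot)$, and differentiating at $s = 0^+$ inside the $L^2$ topology yields the eigenvalue equation.

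The delicate step is precisely this last one: the eigenvalues $\lambda_{k,n}$ do coincide (for instance $\lambda_{1,0} = \lambda_{0,4} = -8$), so uniqueness of Dirichlet-type series in $t$ alone only yields the aggregated identity $P_s q_\lambda(x_0, \cdot) = e^{\lambda s} q_\lambda(x_0, \cdot)$, not the individual one. To decouple the summands $p_{k,n}$ within a multidimensional eigenspace, I would exploit the angular factor $\cos(n \theta(x,y))$ in \eqref{eq-p-nk}: these factors are linearly independent across $n$, and combined with the orthogonality of the Jacobi polynomials $P_k^{0,n}(\cos 2r)$ in the radial variable they should provide a Fourier-type decoupling at the level of the heat kernel. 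Justifying this decoupling rigorously — in a setting where $M$ is only an abstract metric measure space endowed with the pseudo-metrics $r$ and $\theta$, with no a priori circle action on $\theta$ — is the main technical obstacle of the lemma.
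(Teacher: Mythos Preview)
Your approach is essentially the paper's: establish compactness of $P_t$ via the Hilbert--Schmidt property from Proposition~\ref{p.HeatKernel}, compare the spectral and series expansions of $p_t$ to identify the eigenvalues, and then argue termwise. The paper's final step differs only cosmetically --- rather than applying $P_s$ via Chapman--Kolmogorov and differentiating at $s=0^+$, it declares that the spectral projection onto $\ker(L-\lambda_{k,n}\,\mathrm{Id})$ is the integral operator with kernel $p_{k,n}$, and then invokes the commutation of this projection with $L$ (citing \cite{CarronTewodrose2022}) to place $p_{k,n}(x,\cdot)$ in $\mathcal D_L$.

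The multiplicity obstruction you flag (e.g.\ $\lambda_{1,0}=\lambda_{0,4}=-8$) is genuine, and you are right that in the abstract $(r,\theta)$ setting there is no a priori circle action to justify a Fourier-type decoupling in $\theta$. The paper's own proof does not address this point either: its assertion that the eigenprojection kernel equals the single $p_{k,n}$ rather than your aggregated $q_\lambda$ is stated without justification. So the gap you locate is shared with the paper, and your proposal is in fact the more careful of the two in naming it. For what it is worth, the specific instances of~\eqref{eq-eign-fun} actually used downstream --- $(k,n)=(0,1)$ for $L$ in Section~\ref{sec-eigen1}, and $k=1$ for $\tL$ on the quotient, where only the $n=0$ terms survive --- correspond to eigenvalues that are simple in the relevant index set, so the full strength of the lemma is never invoked.
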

\begin{proof}
For any $f\in L^2(M,\mu)$ we have

\[
\left( P_{t}f \right)\left( x \right)=\int_{M} p_{t}\left( x, y \right) f\left( y \right)d\mu\left( y \right)=\int_{-\infty}^{\infty} e^{t \lambda} d E\left( \lambda\right) f,
\]
where $E=E^{L}$ is the (unique) projection-valued measure (spectral measure) on $\mathcal{B}\left( \mathbb{R} \right)$ for the self-adjoint operator $L$. By Proposition \ref{p.HeatKernel} we know that $P_{t}$ is a Hilbert-Schmidt integral operator for any $t>0$, and therefore $P_{t}$  is compact and has a discrete spectrum. Then the positive Borel measure on $\mathbb{R}$ corresponding to the spectral measure $E$ is denoted by $\nu_{f}\left( A \right):=\langle E\left( A\right) f, f \rangle= \langle E\left( A\right)^{2} f, f \rangle=\langle E\left( A\right) f, E\left( A\right)f \rangle \geqslant 0$ for any $A \in \mathcal{B}\left( \mathbb{R} \right)$, and it is supported on a discrete subset of $\mathbb{R}$ . Then

\begin{align}\label{e.EigenfunctionExpansion}
& \langle P_{t}f, f \rangle=\int_{-\infty}^\infty e^{t\lambda}d\nu_{f}\left( \lambda \right)=\int_{M} \int_{M} p_{t}\left( x, y \right) f\left( x \right) f\left( y \right)d\mu\left( y \right)d\mu\left( x \right)
\notag
\\
& =\sum_{k,n=0}^\infty e^{\lambda_{k,n}t}\int_{M\times M}p_{k,n}(x,y)f(x)f(y)d\mu(x)d\mu(y) < \infty.
\end{align}

As $P_{t}$ has a discrete spectrum, so does $L$, and Equation~\eqref{e.EigenfunctionExpansion} implies that $\left\{ \lambda_{k,n}\right\}_{k, n=0}^{\infty}$ are eigenvalues for $L$ with finite-dimensional eigenspaces. 
We denote by $E_{k,n}:=\mathrm{Ker}(L-\lambda_{k,n}Id)$ the eigenspace in $L^2(M, \mu)$ that corresponds to $\lambda_{k,n}$, and let $P_{k,n}:L^2(M, \mu) \longrightarrow E_{k,n}$ be the projection operator. Then $P_{k,n}$ is an integral operator with the kernel $p_{k,n}$, namely for any $f\in L^2(M, \mu)$ we have that 
\begin{equation}\label{eq-P-kn}
P_{k,n} f(x)=\int_M  p_{k,n}(x,y)f(y)d\mu (y).
\end{equation}
Using the fact that $P_{k,n}$ commutes with $L$ and similar arguments as in \cite{CarronTewodrose2022} we obtain that $p_{k,n}$ is in the domain of $L$. Therefore it is an eigenfunction such that \eqref{eq-eign-fun} holds.
\end{proof}

\begin{remark}
One can easily verify that the Dirichlet form associated with the heat kernel $p_t(x,y)$ is local, by combining the small time estimate of $p(t,r, \theta)\asymp C\exp(-C' t^2)$ for $t$ small enough (see \cite{BaudoinBonnefont2009}) and the characterization theorem of localness in \cite{grigor2008dichotomy}.
\end{remark} 

\section{The first isometry}\label{sec-isom-1}

In this section we address the first isometry as described in the introduction. 

\subsection{The metric space $(M,\delta)$}
First we induce a metric $\delta$ on $M$ from the two pseudo-metrics $r$ and $\theta$ in the Assumption \ref{Assump1}. Note from \eqref{eq-p-nk} we know that the first non-constant eigenfunction corresponding to an eigenvalue  is 
\[
p_{0, 1}(x,y)=4\cos (\theta(x,y))(\cos r(x,y)).
\]
We define the function $\delta: M\times M\to \R_{\ge0}$ by
\begin{equation}\label{e.delta}
\delta(x,y):=\arccos(\cos r(x,y)\cos\theta(x,y)).
\end{equation}
In the lemma below we show that $\delta$ in fact defines a metric on $M$. 

\begin{lemma}\label{lemma-delta}
Let $M$ be a space satisfying Assumption \ref{Assump1}, and let $\delta: M\times M\to \R_{\geqslant 0}$ be given by \eqref{e.delta}.
Then $\delta$ is a distance on $M$.
\end{lemma}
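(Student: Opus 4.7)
My plan is to verify in turn the three defining properties of a metric. Symmetry is immediate since both $r$ and $\theta$ are symmetric. For non-degeneracy, I will use the constraints $r \in [0, \pi/2)$ and $\theta \in [0, \pi)$ from Assumption~\ref{Assump1}, which force $\cos r(x, y) \in (0, 1]$ and $\cos \theta(x,y) \in (-1, 1]$; in particular $\cos r(x,y) \cos \theta(x,y) \in (-1, 1]$, so $\delta$ is unambiguously defined and takes values in $[0, \pi)$, and the equation $\cos r(x,y) \cos \theta(x,y) = 1$ can only occur when $r(x,y) = \theta(x,y) = 0$, which by Assumption~\ref{Assump1} forces $x = y$.

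The substantive step is the triangle inequality. My strategy is to realize $\delta$ as an angular distance in the Hilbert space $L^2(M, \mu)$ via the reproducing kernel structure of the first non-trivial eigenspace $E_{0,1}$. For each $x \in M$ I set
\[
\phi_x(z) := p_{0,1}(x, z) = 4 \cos r(x, z) \cos \theta(x, z),
\]
which is bounded and hence in $L^2(M, \mu)$ since $\mu$ is a probability measure by Proposition~\ref{p.HeatKernel}, and which belongs to $E_{0,1}$ by Lemma~\ref{lemma-spectrum}. Because $P_{0,1}$ is an orthogonal projection onto a finite-dimensional subspace, the idempotency $P_{0,1}^2 = P_{0,1}$ translates via \eqref{eq-P-kn} into the reproducing relation
\[
\int_M p_{0,1}(x, z)\, p_{0,1}(z, y) \, d\mu(z) = p_{0,1}(x, y),
\]
which together with the symmetry of $p_{0,1}$ gives $\langle \phi_x, \phi_y \rangle_{L^2} = p_{0,1}(x, y) = 4 \cos \delta(x, y)$. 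In particular $\|\phi_x\|_{L^2} = 2$ for every $x$, so $\tilde \phi_x := \phi_x/2$ yields a family of unit vectors in $L^2(M, \mu)$ satisfying $\langle \tilde \phi_x, \tilde \phi_y \rangle = \cos \delta(x, y)$.

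With this identification in hand, $\delta(x, y)$ equals the angular distance between $\tilde \phi_x$ and $\tilde \phi_y$ on the unit sphere of $L^2(M, \mu)$, and the triangle inequality reduces to the classical fact that this angular distance is itself a metric. I will verify this last fact directly: for unit vectors $u, v, w$ with $\alpha := \arccos \langle u, v \rangle$ and $\beta := \arccos \langle v, w \rangle$, decomposing $u$ and $w$ into components parallel and perpendicular to $v$ and applying Cauchy--Schwarz to the perpendicular parts yields $\langle u, w \rangle \geqslant \cos(\alpha + \beta)$ whenever $\alpha + \beta \leqslant \pi$; the complementary case is automatic since $\delta$ is bounded above by $\pi$. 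I do not anticipate a substantive obstacle here, and the main content is really just the extraction of the reproducing-kernel identity from the idempotency of $P_{0,1}$, after which the triangle inequality becomes a short Hilbert-space computation.
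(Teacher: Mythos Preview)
Your approach is correct and genuinely different from the paper's. The paper establishes the triangle inequality by a direct trigonometric computation: denoting the three pairwise values by $r_i,\theta_i,\delta_i$, it splits into cases according to the size of $\delta_1+\delta_2$ and, using the sum-angle formulas together with the separate triangle inequalities for $r$ and $\theta$, reduces everything to the elementary inequality
\[
(\sin^2 r_1 + \cos^2 r_1 \sin^2\theta_1)(\sin^2 r_2 + \cos^2 r_2 \sin^2\theta_2) \geqslant (\sin r_1 \sin r_2 + \cos r_1 \cos r_2 \sin\theta_1 \sin\theta_2)^2.
\]
Your route---embedding $M$ into the unit sphere of $L^2(M,\mu)$ via $x\mapsto\tilde\phi_x$ and invoking the angular metric---is more conceptual, avoids the case analysis entirely, and in fact anticipates the eigenfunction-embedding strategy of Section~\ref{sec-eigen1}.

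Two points deserve care. First, the lemma as stated assumes only Assumption~\ref{Assump1}, whereas your argument needs the measure and the heat kernel (Assumptions~\ref{Assump2} and~\ref{Ass.HeatKernel}) to invoke Proposition~\ref{p.HeatKernel} and Lemma~\ref{lemma-spectrum}; so you prove a formally weaker statement, albeit one sufficient for everything downstream. Second, the idempotency $P_{0,1}^2=P_{0,1}$ is an operator identity on $L^2$, so the reproducing relation $\langle\phi_x,\phi_y\rangle=p_{0,1}(x,y)$ follows a priori only for $\mu\times\mu$-a.e.\ $(x,y)$; to obtain the triangle inequality for \emph{all} triples you must upgrade this to a pointwise identity, e.g.\ by extracting it from the Chapman--Kolmogorov equation for the explicitly defined kernel $p_t$ together with the absolute convergence of~\eqref{eq-spec-decomp} (as is done later for~\eqref{eq-p-nk-orthog}), rather than from abstract projection idempotency alone.
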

\begin{proof}
First note that $\delta(x,y)=0$ implies that $r(x,y)=\theta(x,y)=0$. From the definition of $r$ and $\theta$ we know that  $x=y$. Clearly $\delta$ is symmetric due to the symmetries of $r$ and $\theta$. We are left to show that $\delta$ satisfies the triangular inequality. 

Consider any $x, y, z\in M$, we denote by $r_1=r(x,y)$, $r_2=r(y,z)$, $r_3=r(x,z)$ and $\theta_1=\theta(x,y)$, $\theta_2=\theta(y,z)$, $\theta_3=\theta(x,z)$. Since $r$ and $\theta$ are pseudo-metrics we know that they satisfy the triangular inequality hence

\begin{equation}\label{eq-trig}
r_1+r_2\geqslant r_3,\quad \theta_1+\theta_2\geqslant \theta_3.
\end{equation}
Let $\delta_1=\delta(x,y)$, $\delta_2=\delta(y,z)$, $\delta_3=\delta(x,z)$. We want to show that $\delta_1+\delta_2\geqslant \delta_3$. We divide the discussion into 3 cases.

(1) If $\delta_1+\delta_2\in [\pi, 2\pi)$, since $\delta_3\in [0,\pi)$, inequality holds.

(2) If $\delta_1+\delta_2\in [0,\pi)$, and $\delta_1,\delta_2\in [0,\frac{\pi}2)$. From \eqref{e.delta} we know that $\theta_1\in [0,\delta_1 ] $ and  $\theta_2\in[0,\delta_2 ]$.
It suffices to show that $\cos(\delta_1+\delta_2)\leqslant \cos \delta_3$. We have
\begin{align}\label{eq-equality-1}
\cos(\delta_1+\delta_2)=\cos r_1\cos \theta_1\cos r_2\cos\theta_2-\sqrt{(1-\cos^2r_1\cos^2 \theta_1)(1-\cos^2r_2\cos^2 \theta_2)}.
\end{align}
On the other hand since  $\theta_3\leqslant \theta_1+\theta_2<\frac\pi2$, by \eqref{e.delta} and \eqref{eq-trig} we also have
\begin{align*}
\cos\delta_3&=\cos r_3\cos \theta_3\geqslant \cos (r_1+r_2)\cos (\theta_1+\theta_2)\\
&=(\cos r_1\cos r_2-\sin r_1\sin r_2)(\cos \theta_1\cos\theta_2-\sin \theta_1\sin \theta_2).
\end{align*}
Therefore it is enough to show that 
\begin{align*}
&\sin r_1\sin r_2\sin \theta_1\sin \theta_2-\cos r_1\cos r_2 \sin \theta_1\sin \theta_2-\sin r_1\sin r_2\cos \theta_1\cos\theta_2 \\
&\quad\geqslant -\sqrt{(1-\cos^2r_1\cos^2 \theta_1)(1-\cos^2r_2\cos^2 \theta_2)},
\end{align*}
which is equivalent to 
\begin{align}\label{eq-mid-ineq}
\sqrt{(1-\cos^2r_1\cos^2 \theta_1)(1-\cos^2r_2\cos^2 \theta_2)} \geqslant \sin r_1\sin r_2\cos(\theta_1+\theta_2)+\cos r_1\cos r_2 \sin \theta_1\sin \theta_2.
\end{align}
Note that $\sin r_1\sin r_2>0$, the right hand side of \eqref{eq-mid-ineq} is bounded from above by 
\[
\sin r_1\sin r_2+\cos r_1\cos r_2 \sin \theta_1\sin \theta_2,
\]
which is non-negative. Therefore we are reduced to show that 
\begin{equation}\label{eq-mid-ineq1}
(1-\cos^2r_1\cos^2 \theta_1)(1-\cos^2r_2\cos^2 \theta_2) \geqslant ( \sin r_1\sin r_2+\cos r_1\cos r_2 \sin \theta_1\sin \theta_2)^2.
\end{equation}
Note that 
\[
1-\cos^2r_1\cos^2 \theta_1=\sin ^2r_1+\cos^2r_1\sin^2\theta_1,\quad 1-\cos^2r_2\cos^2 \theta_2=\sin ^2r_2+\cos^2r_2\sin^2\theta_2
\]
The left hand side of \eqref{eq-mid-ineq1} is then 
\begin{align*}
&(\sin ^2r_1+\cos^2r_1\sin^2\theta_1)(\sin ^2r_2+\cos^2r_2\sin^2\theta_2)\\
&\quad=\sin ^2r_1\sin ^2r_2+\cos^2r_1\sin^2\theta_1\cos^2r_2\sin^2\theta_2+\sin ^2r_1\cos^2r_2\sin^2\theta_2+\cos^2r_1\sin^2\theta_1\sin ^2r_2\\
&\quad\geqslant \sin ^2r_1\sin ^2r_2+\cos^2r_1\sin^2\theta_1\cos^2r_2\sin^2\theta_2+2\sin r_1\cos r_2\sin\theta_2\cos r_1\sin \theta_1\sin r_2.
\end{align*}
We then obtain the desired  \eqref{eq-mid-ineq1} by realizing that its right hand side matched with the right hand side of the above inequality.  

(3) If $\delta_1+\delta_2\in[0,\pi)$ and $\delta_1\in [0,\frac\pi2 )$, $\delta_2\in [\frac\pi2, \pi)$, then $\theta_1\in[0,\delta_1]$ and $\theta_2\in [\delta_2, \pi)$. Again we just need to show that $\cos(\delta_1+\delta_2)\leqslant \cos \delta_3$. If $\delta_3<\frac\pi2$, then inequality holds. If $\delta_3\in (\frac\pi2, \pi]$, then $\theta_3\in [\delta_3, \pi]$. Similarly as in case (2), we still have the equality \eqref{eq-equality-1}. Next we use the fact $r_3\geqslant |r_1-r_2|$ to estimate 
\[
\cos r_3\leqslant (\cos r_1\cos r_2+\sin r_1\sin r_2).
\]
Hence 
\begin{align*}
\cos\delta_3&=\cos r_3\cos \theta_3\geqslant \cos r_3\cos (\theta_1+\theta_2)\\
&\ge(\cos r_1\cos r_2+\sin r_1\sin r_2)(\cos \theta_1\cos\theta_2-\sin \theta_1\sin \theta_2).
\end{align*}
Therefore it suffices to show that 
\begin{align*}
&-\sin r_1\sin r_2\sin \theta_1\sin \theta_2-\cos r_1\cos r_2 \sin \theta_1\sin \theta_2+\sin r_1\sin r_2\cos \theta_1\cos\theta_2 \\
&\quad\geqslant -\sqrt{(1-\cos^2r_1\cos^2 \theta_1)(1-\cos^2r_2\cos^2 \theta_2)},
\end{align*}
which is equivalent to 
\begin{align*}
&\sqrt{(1-\cos^2r_1\cos^2 \theta_1)(1-\cos^2r_2\cos^2 \theta_2)}
\geqslant -\sin r_1\sin r_2\cos(\theta_1+\theta_2)+\cos r_1\cos r_2 \sin \theta_1\sin \theta_2.
\end{align*}
We then just need to show that 
\[
\sqrt{(1-\cos^2r_1\cos^2 \theta_1)(1-\cos^2r_2\cos^2 \theta_2)}
\geqslant \sin r_1\sin r_2+\cos r_1\cos r_2 \sin \theta_1\sin \theta_2.
\]
using the same computation as in case (2).

The proof is then complete.
\end{proof}
In the proposition below we show that the metric space $(M,\delta)$ is in fact complete.
\begin{proposition}
Let $(M,\delta)$ be given as in Lemma \ref{lemma-delta}. Then it is a complete metric space.
\end{proposition}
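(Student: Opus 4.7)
The plan is to start with an arbitrary $\delta$-Cauchy sequence $\{x_n\} \subset M$ and produce a limit $z \in M$ with $\delta(x_n, z) \to 0$, by leveraging the defining identity $\cos \delta(x, y) = \cos r(x, y) \cos \theta(x, y)$ together with the range constraints $r \in [0, \pi/2)$ and $\theta \in [0, \pi)$ of Assumption~\ref{Assump1}.

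The first step is to deduce that $\{x_n\}$ is Cauchy with respect to each of the pseudo-metrics $r$ and $\theta$ individually. Since $\cos r > 0$ and $\cos \theta \leqslant 1$, the identity yields $\cos \delta \leqslant \cos r$, hence $\delta \geqslant r$ everywhere, so $r(x_n, x_m) \leqslant \delta(x_n, x_m) \to 0$. Once $\delta(x_n, x_m) < \pi/2$—which occurs for all large $n, m$—one has $\cos \delta > 0$, and combined with $\cos r > 0$ this forces $\cos \theta > 0$; then $\theta(x_n, x_m) \in [0, \pi/2)$ and $\cos \delta \leqslant \cos \theta$ gives $\theta \leqslant \delta$, so $\theta(x_n, x_m) \to 0$ as well.

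The second step uses the completeness of $M$ with respect to both $r$ and $\theta$ from Assumption~\ref{Assump1}, which I read as producing a common limit: a sequence Cauchy in both pseudo-metrics admits some $z \in M$ for which simultaneously $r(x_n, z) \to 0$ and $\theta(x_n, z) \to 0$. This joint-limit statement is consistent with the topological framework of Assumption~\ref{Assump2}. If one prefers to invoke the two completeness properties separately, one obtains $x^* \in M$ with $r(x_n, x^*) \to 0$ and $y^* \in M$ with $\theta(x_n, y^*) \to 0$, and must then identify $x^* = y^*$ via the separating condition of Assumption~\ref{Assump1}. For this, the pseudo-metric reverse triangle inequality makes $\theta(x_n, x^*)$ Cauchy in $\R$, and comparison with $\theta(x_n, y^*) \to 0$ yields $\theta(x_n, x^*) \to \theta(x^*, y^*)$; analogously $r(x_n, y^*) \to r(x^*, y^*)$, and one deduces from joint-Cauchy behaviour that both $r(x^*, y^*) = 0$ and $\theta(x^*, y^*) = 0$, hence $x^* = y^*$ by Assumption~\ref{Assump1}.

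The third step, given a common limit $z$, is the routine verification that $\delta(x_n, z) \to 0$: the identity $\cos \delta(x_n, z) = \cos r(x_n, z) \cos \theta(x_n, z)$ together with $\cos r(x_n, z) \to 1$ and $\cos \theta(x_n, z) \to 1$ gives $\cos \delta(x_n, z) \to 1$, hence $\delta(x_n, z) \to 0$ by continuity of $\arccos$ at $1$. The main obstacle is squarely the identification of the common limit in the second step; the essence is that under the jointness implicit in Assumption~\ref{Assump1}, the $r$- and $\theta$-limits of a sequence that is Cauchy in both must coincide, for otherwise one would produce a pair of distinct points which are nevertheless forced to lie at zero $r$- and zero $\theta$-distance from each other, contradicting the separating condition $r(x, y) = \theta(x, y) = 0 \Rightarrow x = y$.
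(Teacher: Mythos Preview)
Your primary argument---$\delta$-Cauchy implies $r$-Cauchy and $\theta$-Cauchy, then invoke Assumption~\ref{Assump1} to obtain a single $z\in M$ with both $r(x_n,z)\to 0$ and $\theta(x_n,z)\to 0$, whence $\delta(x_n,z)\to 0$---is exactly the paper's proof, with the first step spelled out in more detail than the paper gives (the paper simply reads off $r(x_m,x_n)\to 0$ and $\theta(x_m,x_n)\to 0$ from \eqref{e.delta}).

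The alternative route you sketch (obtain separate limits $x^*$ for $r$ and $y^*$ for $\theta$ and then identify them) has a real gap. You correctly deduce $\theta(x_n,x^*)\to\theta(x^*,y^*)$ and $r(x_n,y^*)\to r(x^*,y^*)$, but nothing in the argument forces either of these limits to be zero; the phrase ``one deduces from joint-Cauchy behaviour'' is doing work that has not been justified. Indeed, completeness with respect to each pseudo-metric \emph{separately} does not in general guarantee that the $r$-equivalence class of $x^*$ and the $\theta$-equivalence class of $y^*$ intersect. The paper, like your primary reading, simply takes Assumption~\ref{Assump1} to supply the common limit directly, and you should rest on that.
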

\begin{proof}
 Consider a Cauchy sequence $\{x_n\}_{n\ge1}$ in $(M,\delta)$ such that
\[
\lim_{m,n\to\infty}\delta(x_n,x_m)\to 0.
\]
From \eqref{e.delta} we know that 
\[
\lim_{m,n\to\infty}r(x_m,x_n)=\lim_{m,n\to\infty} \theta(x_n,x_m)= 0.
\]
From Assumption \ref{Assump1} we know that $M$ is complete with respect to $r$ and $\theta$. Hence there exists a $x\in M$ such that $r(x_n,x)\to0$ and $\theta(x_n,x)\to0$ as $n\to\infty$, which implies that $\lim_{n\to\infty}\delta(x_n,x)=0$. 
\end{proof}

\subsection{Dirichlet form and heat kernel on $(M, \delta, \mu)$}

We first construct a heat kernel on the metric measure space $(M, \delta, \mu)$ that is essential for the later construction of the isometry $ (M,\delta)\to (\cS^3, d_{\cS^3})$.  First let us recall the Poisson summation formula. 

\begin{definition}\label{def-q}
Let $q: [0,\infty)\times \R\to \R_{\ge0}$ be a function such that
\begin{align}\label{eq-q-kernel-1}
q(t, x):= \frac{\sqrt{\pi}e^t}{4t^{3/2}}\frac{1}{\sqrt{1- x^2}}\sum_{k\in \mathbb{Z}}(\arccos x+2k\pi) e^{-\frac{(\arccos x+2k\pi)^2}{4t}}
\end{align}
It follows from the Poisson summation formula that $q$ also admits the following expression
\begin{align}\label{eq-q-kernel-2}
q(t, \cos x)= \frac{\sqrt{\pi}e^t}{4t^{3/2}}\frac{x}{\sin x}\left( 1+ 2\sum_{k=1}^\infty e^{-\frac{k^2\pi^2}{t}}\left(\cosh\frac{k\pi}{t}+2k\pi\frac{\sinh\frac{k\pi\delta}{t}}{\delta}\right)\right).
\end{align}
It is an obvious fact that $q(t,\cdot)$ admits an analytic extension for $\delta\in \mathbb{C}\setminus (-\infty, -1]$. We then have that for any $x\in [1,\infty)$ and $t>0$,
\begin{align}\label{eq-q-kernel-3}
q(t,x)= \frac{\sqrt{\pi}e^t}{4t^{3/2}}\frac{\cosh^{-1}x}{\sqrt{x^2-1}}e^{\frac{(\cosh^{-1}x)^2}{4t}}(1+R(t,x)),
\end{align}
where $|R(t,x)|\leqslant Ce^{-\frac{c}{t}}$ for some constants $c, C>0$. For any $x\in (-1+\epsilon, 1]$ for some $\epsilon>0$ and any $t>0$,
\begin{align}\label{eq-q-kernel-4}
q(t,x)= \frac{\sqrt{\pi}e^t}{4t^{3/2}}\frac{\arccos x}{\sqrt{1-x^2}}e^{-\frac{(\arccos x)^2}{4t}}(1+R'(t,x)),
\end{align}
where $|R'(t,x)|\leqslant C_\epsilon e^{-\frac{c_\epsilon}{t}}$ for some constants $c_\epsilon, C_\epsilon>0$ that depend only on $\epsilon$.
\end{definition}

The lemma below reveals the relation between the function $q$ as defined above and the expression $p$ as given in \eqref{eq-kernel}.
\begin{lemma}\label{lemma-kernel-equi}
Let $p:[0,\infty)\times [0, \pi/2) \times [-\pi, \pi)\to \R$ be given as in \eqref{eq-kernel},
then for any $r,\theta\in [0, \pi/2) \times [-\pi, \pi)$ and $t\ge0$ we have that 
\begin{equation}\label{eq-kernel-int}
p(t,r,\theta)=\frac{1}{2\pi^2}\frac{1}{\sqrt{4\pi t}}\int_{-\infty}^{+\infty} e^{-\frac{(y+i\theta)^2}{4t}}q(t,\cos r\cosh y)dy.
\end{equation}
\end{lemma}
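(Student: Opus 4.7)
The plan is to verify the identity by substituting the spectral expansion of $q$ into the right-hand side and performing the resulting Gaussian integrals term by term. First I would apply Poisson summation to the $k$-series in \eqref{eq-q-kernel-1}: using the Fourier pair $f(y)=y\,e^{-y^2/(4t)}\leftrightarrow\widehat f(\xi)=-2it\xi\sqrt{4\pi t}\,e^{-t\xi^2}$ one obtains $\sum_{k\in\mathbb{Z}}(s+2\pi k)e^{-(s+2\pi k)^2/(4t)}=\tfrac{4t^{3/2}}{\sqrt{\pi}}\sum_{n\geqslant 1}n\sin(ns)e^{-tn^2}$, and after simplification the Chebyshev expansion
\[
q(t,\cos s)=\sum_{d=0}^{\infty}(d+1)\,U_d(\cos s)\,e^{-td(d+2)},\qquad U_d(\cos s)=\frac{\sin((d+1)s)}{\sin s},
\]
which by analyticity persists for $\cos s\in\mathbb{C}\setminus(-\infty,-1]$, in particular at $\cos s=\cos r\cosh y$.

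Second, each $U_d(\cos r\cosh y)$ is the $SL(2,\mathbb{C})$-character of the $(d+1)$-dimensional irreducible representation of $SU(2)$ evaluated at the complexified element $e^{rX}e^{iyZ}$, whose trace equals $2\cos r\cosh y$. Since $e^{iyZ}$ is diagonal in the $Z$-weight basis with eigenvalues $e^{-ny}$ for $n\in\{-d,-d+2,\dots,d\}$, and the diagonal Wigner matrix element has the Jacobi-polynomial form $d^{d/2}_{n/2,n/2}(2r)=(\cos r)^{|n|}P^{(0,|n|)}_{(d-|n|)/2}(\cos 2r)$, the character decomposes as
\[
U_d(\cos r\cosh y)=\sum_{\substack{|n|\leqslant d\\ n\equiv d\,(\mathrm{mod}\,2)}}(\cos r)^{|n|}\,P^{(0,|n|)}_{(d-|n|)/2}(\cos 2r)\,e^{-ny}.
\]

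Third, a completing-the-square computation that produces $(y+i\theta+2tn)^2$, combined with a contour shift, gives the elementary Gaussian identity
\[
\frac{1}{\sqrt{4\pi t}}\int_{-\infty}^{\infty}e^{-(y+i\theta)^2/(4t)}\,e^{-ny}\,dy=e^{in\theta+tn^2}.
\]
Plugging the three displays into the right-hand side of \eqref{eq-kernel-int} and interchanging the $y$-integral with the double sum in $(d,n)$ produces $\sum_{d,n}(d+1)e^{t(n^2-d(d+2))}e^{in\theta}(\cos r)^{|n|}P^{(0,|n|)}_{(d-|n|)/2}(\cos 2r)$. Reindexing with $k=(d-|n|)/2$ so that $d+1=2k+|n|+1$, and using the arithmetic $n^2-d(d+2)=-4k(k+|n|+1)-2|n|=\lambda_{k,n}$ from \eqref{eq-lam-nk}, recovers the series \eqref{eq-kernel} defining $p(t,r,\theta)$, up to the overall normalization absorbed by the $\tfrac{1}{2\pi^2}$ prefactor of the right-hand side.

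The main obstacle is justifying the term-by-term interchange of series and integral. For fixed $t>0$ and $r$ the bound $(d+1)|U_d(\cos r\cosh y)|e^{-td(d+2)}\leqslant(d+1)^2e^{d|y|-td(d+2)}$ gives a majorant growing at most like $e^{C_t|y|}$, which is integrable against $e^{-y^2/(4t)}$, so Fubini applies. The Wigner-Jacobi identity of step two is the algebraic heart of the proof; it can be derived either from the Peter-Weyl decomposition of $SU(2)$, or by checking that both sides satisfy the same Jacobi ODE in $\cos 2r$ with matching boundary data at $r=0$.
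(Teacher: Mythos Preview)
Your argument is correct and is genuinely different from the paper's. The paper proves the identity by a soft PDE argument: it recalls that $p(t,r,\theta)$ is a fundamental solution of $(\partial_t-\Delta)u=0$ with $\Delta$ the sub-Laplacian \eqref{eq-Delta}, observes that the Riemannian Laplacian $\Delta_R$ on $S^3$ is the pushforward of $\Delta+\partial_\theta^2$ under $\varrho(r,\theta)=\arccos(\cos r\cos\theta)$, and then cites \cite{BaudoinBonnefont2009} for the fact that the right-hand side of \eqref{eq-kernel-int} is also a fundamental solution of the same equation; uniqueness finishes the proof. No spectral expansion of $q$ and no representation theory enter.

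Your route trades that black-box citation for an explicit, self-contained computation: the Poisson-summation rewriting of $q$ as the Chebyshev series $\sum_d(d+1)U_d\,e^{-td(d+2)}$, the $SU(2)$ character decomposition $U_d(\cos r\cosh y)=\sum_n d^{d/2}_{n/2,n/2}(2r)\,e^{-ny}$ with the Wigner--Jacobi identity for the diagonal matrix elements, and the Gaussian integral in $y$. This is more hands-on and makes transparent \emph{why} the subelliptic spectrum $\lambda_{k,n}$ arises as $n^2-d(d+2)$; the price is that the Wigner--Jacobi identity must be supplied (your sketch via Peter--Weyl or the Jacobi ODE is fine). The interchange of sum and integral is justified exactly as you say. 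One small point: your final line ``up to the overall normalization absorbed by the $\tfrac{1}{2\pi^2}$'' is a bit loose---carrying your expansion through gives the series \eqref{eq-kernel} multiplied by $\tfrac{1}{2\pi^2}$, so either that prefactor in \eqref{eq-kernel-int} is a misprint inherited from a different volume normalization in \cite{BaudoinBonnefont2009}, or the constant belongs inside the definition of $q$; in any case your method pins the constant down rather than hiding it.
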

\begin{proof}
First from \cite{BaudoinBonnefont2009} one can verify that  $p(t,r,\theta)$ solves the following partial differential equation,
\begin{equation}\label{eq-heat-eqn}
\left(\frac{\partial}{\partial t}-\Delta\right)u(t,r,\theta)=0
\end{equation}
for all $(t, r,\theta)\in [0,\infty)\times [0, \pi/2) \times [-\pi, \pi)$ and satisfies that $\lim_{t\to0}p(t,r,\theta)=\delta_{(0,0)}(r,\theta)$
where $\delta_{(0,0)}(\cdot)$ is the dirac mass at $(0,0)$, and
\begin{equation}\label{eq-Delta}
\Delta=\frac{\partial^2}{\partial r^2}+2\cot 2r\frac{\partial}{\partial r}+\tan^2 r\frac{\partial^2}{\partial \theta^2}.
\end{equation}
On the other hand, we know that $q(t,\delta)$ solves the partial differential equation 
\[
\left(\frac{\partial}{\partial t}-\Delta_R\right)v(t,\delta)=0
\]
for all $(t,\delta)\in (0,\infty)\times [0, \pi)$ and satisfies that  $\lim_{t\to0}q(t,\cdot)=\delta_{(0)}(\cdot)$ where
\[
\Delta_R=\frac{\partial^2}{\partial\delta^2}+6\cot\delta\frac{\partial}{\partial \delta}.
\]
Consider the function $\varrho: [0, \pi/2) \times [-\pi, \pi)\to [0, \pi)$ such that for all $(r,\theta)\in [0, \pi/2)\times [-\pi, \pi)$,
\[
\varrho(r,\theta)=\arccos (\cos r\cos \theta).
\]
It is then easy to verify that $\Delta_R=\varrho_*(\Delta+\frac{\partial^2}{\partial\theta^2})$. Namely for any $f\in C^2([0, \pi),\R)$ and $(r,\theta)\in [0, \pi/2)\times [-\pi, \pi)$, we have that 
\[
(\Delta+\frac{\partial^2}{\partial\theta^2})(f\circ \varrho)(r,\theta)=(\Delta_R f)(\varrho(r,\theta)).
\] 
Now let $g$ be a function on $[0,\infty)\times [0, \pi/2) \times [-\pi, \pi)$ which takes the form of \eqref{eq-kernel-int}. It is shown in \cite{BaudoinBonnefont2009} that $g(t,r,\theta)$ is a fundamental solution of \eqref{eq-heat-eqn}. 

By the uniqueness of solution to the equation \eqref{eq-heat-eqn} we then obtain that $p_t(r,\theta)=g_t(r,\theta)$ for all $(t,r,\theta)\in [0,\infty)\times [0,\frac\pi2)\times[-\pi,\pi)$. 
\end{proof}
The equality \eqref{eq-kernel-int} has a conversed version as presented below.
\begin{corollary}
Let $p(t,r,\theta)$ and $q(t,\delta)$ be as in Lemma \ref{lemma-kernel-equi}. Let $\zeta(t,\varphi):=\frac{1}{\sqrt{4\pi t}}e^{-\frac{\varphi^2}{4t}}$ be the Gaussian kernel on $\R$. Then we have
\begin{equation}\label{eq-qt-conv}
q(t,\arccos (\cos r\cos \theta))=\int_\R \zeta(t, \theta-\varphi)p(t,r,\varphi)d\varphi.
\end{equation}
\end{corollary}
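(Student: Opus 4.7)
My plan is to prove \eqref{eq-qt-conv} by uniqueness for a parabolic Cauchy problem satisfied by both sides. Set
\[
v(t,r,\theta):=\int_\R\zeta(t,\theta-\varphi)p(t,r,\varphi)\,d\varphi,\qquad w(t,r,\theta):=q\bigl(t,\varrho(r,\theta)\bigr),
\]
with $\varrho(r,\theta):=\arccos(\cos r\cos\theta)$. Boundedness of $p(t,r,\cdot)$ (from the absolute convergence of \eqref{eq-kernel}) and its $2\pi$-periodicity in $\varphi$, combined with the Gaussian decay of $\zeta$, ensure $v$ is well defined and smooth, with derivatives passing under the integral.

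The first step is to check that $v$ solves
\[
\partial_t v=(\Delta+\partial_\theta^2)v,
\]
with $\Delta$ as in \eqref{eq-Delta}. Differentiating in $t$ and using $\partial_t p=\Delta p$ (from Lemma~\ref{lemma-kernel-equi}) together with $\partial_t\zeta=\partial_\varphi^2\zeta$ decomposes $\partial_t v$ into two pieces; integrating by parts twice in $\varphi$ transfers $\partial_\varphi^2$ off $\zeta$ and produces $\partial_\theta^2 v$, while the remaining piece equals $\Delta v$ since the $r$-coefficients of $\Delta$ commute with the $\varphi$-convolution. By the intertwining $(\Delta+\partial_\theta^2)(f\circ\varrho)=(\Delta_R f)\circ\varrho$ established in Lemma~\ref{lemma-kernel-equi} and the equation $\partial_t q=\Delta_R q$, the function $w$ satisfies the same PDE.

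To close by uniqueness I would match initial data as $t\to 0^+$: $\zeta(t,\cdot)$ is an approximate identity on $\R$ and $p(t,r,\cdot)$ concentrates at $(r,\varphi)=(0,0)$, so their convolution forces $v(t,\cdot,\cdot)$ to concentrate at $(r,\theta)=(0,0)$; simultaneously $q(t,\delta)\to\delta_0$ and $\varrho^{-1}(0)=\{(0,0)\}$ give the same limit for $w$. Standard parabolic uniqueness then yields $v\equiv w$. The main obstacle I anticipate is rigorously matching the initial data, since both building blocks of $v$ individually approximate deltas and one must pair them against a dense class of test functions while tracking normalizations. A fallback is the purely spectral route: Fourier-expand $p$ in $\varphi$ via \eqref{eq-kernel}, apply $\int_\R\zeta(t,\theta-\varphi)e^{in\varphi}\,d\varphi=e^{-n^2t}e^{in\theta}$, observe the collapse $4k(k+|n|+1)+2|n|+n^2=(2k+|n|)(2k+|n|+2)$, and identify the resulting series with the spectral expansion of $q\circ\varrho$ on $\cS^3$ via a standard zonal spherical identity for Jacobi polynomials.
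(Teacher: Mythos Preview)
Your proposal is correct and follows essentially the same approach as the paper: both show that the convolution integral solves $\partial_t v=(\Delta+\partial_\theta^2)v$ by combining $\partial_t p=\Delta p$ with the Gaussian heat equation $\partial_t\zeta=\partial_\varphi^2\zeta$ and integration by parts, then use the intertwining $(\Delta+\partial_\theta^2)(f\circ\varrho)=(\Delta_R f)\circ\varrho$ together with $\partial_t q=\Delta_R q$ to see the left-hand side satisfies the same equation, and finally match the initial data at $t\to0^+$. Your alternative spectral route (Fourier-expanding $p$ in $\varphi$ and using $\int_\R\zeta(t,\theta-\varphi)e^{in\varphi}\,d\varphi=e^{-n^2t}e^{in\theta}$) is also valid and is in fact exactly the computation the paper carries out in the subsequent Proposition~\ref{lemma-q-t} to obtain the explicit spectral form \eqref{eq-qt-explicit} of $q_t$.
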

\begin{proof}
We first show that the right hand side of \eqref{eq-qt-conv}, which we denote by $v(t,r,\theta)$, solves the differential equation
\[
\left(\frac{\partial}{\partial t}-\Delta_R\right)v(t,r,\theta)=0.
\]
Since
\begin{align*}
\Delta_R v(t, r, \theta)&=\left(\frac{\partial^2}{\partial r^2}+2\cot 2r\frac{\partial}{\partial r}+\sec^2 r\frac{\partial^2}{\partial \theta^2}\right)\int_\R \zeta(t, \theta-\varphi)p(t,r,\varphi)d\varphi \\
&=\int_\R \sec^2 r  \frac{\partial^2}{\partial \theta^2}\zeta(t, \theta-\varphi)p(t,r,\varphi)d\varphi + \int_\R \zeta(t, \theta-\varphi) \left(\frac{\partial^2}{\partial r^2}+2\cot 2r\frac{\partial}{\partial r}\right)p(t,r,\varphi) d\varphi.
\end{align*}
Using the  fact that $p(t,r,\theta)$ satisfies \eqref{eq-heat-eqn} we have
\[
\left(\frac{\partial^2}{\partial r^2}+2\cot 2r\frac{\partial}{\partial r}\right)p(t,r,\varphi)=\frac{\partial}{\partial t}p(t,r,\varphi)-\tan^2 r \frac{\partial^2}{\partial \varphi^2} p(t,r,\varphi).
\]
Using integral by parts and combining with the fact that $\zeta$ is a Gaussian kernel we obtain that
\begin{align}\label{eq-v-heat-eqn}
\Delta_R v(t, r, \theta)
&=\int_\R \bigg(  \frac{\partial}{\partial t}\zeta(t, \theta-\varphi)p(t,r,\varphi) + \zeta(t, \theta-\varphi)\frac{\partial}{\partial t}p(t,r,\varphi)  \bigg)  d\varphi \notag \\
&=\frac{\partial}{\partial t} v(t, r, \theta).
\end{align}
Moreover, we already know that $\lim_{t\to0}q(t,\cdot)=\delta_{(0)}(\cdot)$, $\lim_{t\to0}\zeta(t,\cdot)=\delta_{(0)}(\cdot)$ and $\lim_{t\to0}p(t,\cdot, \cdot)=\delta_{(0,0)}(\cdot,\cdot)$. Plugging them into \eqref{eq-v-heat-eqn} we can easily obtain that
\[
\lim_{t\to0}q(t,\arccos (\cos r\cos \theta))=\delta_{(0,0)}(r,\theta)=\lim_{t\to0}v(t,r,\theta).
\]
\end{proof}
We now define a function on $M^2\times \R_{\ge0}$ by convolving $q: [0,\infty)\times \R\to \R_{\ge0}$ and $\delta: M\times M\to \R_{\ge0}$ as given in \eqref{e.delta}, let
\begin{equation}\label{eq-Riem-kernel}
q_t(x,y):=q(t,\delta(x,y)).
\end{equation}
In the proposition below we show that $q_t(x,y)$ is in fact a heat kernel on $(M, \delta, \mu)$.

\begin{proposition}\label{lemma-q-t}
Let $q_t(x,y)$, $t\ge0$, $x,y\in M$ be as given in \eqref{eq-Riem-kernel}. Then 
\begin{equation}\label{eq-qt-explicit}
q_t(x,y)=\sum_{n=-\infty}^{\infty} \sum_{k=0}^\infty e^{\lambda'_{k,n} t}e^{in\theta(x,y)}\Phi_{k,n}(r(x,y))
\end{equation}
where $\lambda'_{k,n}=-(4k(k+|n|+1)+2|n|+n^2)$ and $\Phi_{k,n}(r)=4k(k+|n|+1)+2|n|(\cos r)^{|n|}P_k^{0, |n|}(\cos 2r)$.
Moreover, $q_t(x,y)$ is a heat kernel on  $(M, \delta, \mu)$. Precisely, it satisfies the conditions (i)-(v) in Definition~\ref{d.HeatKernel}.  
\end{proposition}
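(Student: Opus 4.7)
The plan is to first establish the series expansion \eqref{eq-qt-explicit}, and then use it together with Lemma~\ref{lemma-spectrum} to verify the five heat kernel axioms. To derive \eqref{eq-qt-explicit}, I would start from the convolution identity $q_t(x,y) = \int_\R \zeta(t, \theta(x,y) - \varphi)\, p(t, r(x,y), \varphi)\, d\varphi$ established in the corollary just above, substitute the absolutely convergent series \eqref{eq-kernel} for $p$, interchange summation and integration (justified by the same Jacobi polynomial bound invoked in Proposition~\ref{p.HeatKernel}), and evaluate each Gaussian Fourier integral by completing the square:
\[
\int_\R \zeta(t,\theta-\varphi)\, e^{in\varphi}\, d\varphi = e^{in\theta}\, e^{-n^2 t}.
\]
This identifies the eigenvalues as $\lambda'_{k,n} = \lambda_{k,n} - n^2 = -\bigl(4k(k+|n|+1) + 2|n| + n^2\bigr)$ and the radial factors as $\Phi_{k,n}(r) = (2k+|n|+1)(\cos r)^{|n|} P_k^{0,|n|}(\cos 2r)$.

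After pairing the $n$ and $-n$ terms (both $\lambda'_{k,n}$ and $\Phi_{k,n}$ depend only on $|n|$), the complex series \eqref{eq-qt-explicit} rearranges into a real spectral expansion of the form
\[
q_t(x,y) = \sum_{n,k \geqslant 0} e^{\lambda'_{k,n} t}\, p_{k,n}(x,y),
\]
built from the same $L$-eigenfunctions $p_{k,n}$ that appear in \eqref{eq-spec-decomp} for $p_t$. From here the heat kernel axioms follow essentially from the structure of $L$. Symmetry (ii) is immediate from $\delta(x,y) = \delta(y,x)$. Stochastic completeness (iii) follows by integrating term by term and using that for $(k,n) \neq (0,0)$ each $p_{k,n}(x,\cdot)$ is an $L$-eigenfunction with a nonzero eigenvalue, hence $L^2$-orthogonal to the constants, while the remaining $(0,0)$ term contributes $\int_M p_{0,0}\, d\mu = 1$ by Proposition~\ref{p.HeatKernel}. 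Chapman--Kolmogorov (iv) reduces to the projection identity
\[
\int_M p_{k,n}(x,z)\, p_{k',n'}(z,y)\, d\mu(z) = \delta_{(k,n),(k',n')}\, p_{k,n}(x,y),
\]
which is the kernel form of $P_{k,n} P_{k',n'} = \delta_{(k,n),(k',n')} P_{k,n}$ supplied by Lemma~\ref{lemma-spectrum}. Approximation of identity (v) follows from writing $Q_t f = \sum_{k,n} e^{\lambda'_{k,n} t} P_{k,n} f$ and applying dominated convergence to the tail of the spectral series.

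Positivity (i) is the only condition that does not reduce to the spectral calculus, and for it I would use the convolution representation directly: $\zeta(t, \cdot) > 0$ on $\R$, and $p(t,r,\varphi) \geqslant 0$ for every $\varphi \in \R$ since Definition~\ref{d.HeatKernel}(i) gives nonnegativity on $[0, \pi)$, while the series \eqref{eq-kernel} is manifestly even and $2\pi$-periodic in $\theta$, extending the bound to all of $\R$. The main obstacle I anticipate is the bookkeeping required to match the complex expansion indexed by $n \in \mathbb{Z}$ with the real projections $\{P_{k,n}\}_{n \geqslant 0}$ of Lemma~\ref{lemma-spectrum}: the terms with $n = 0$ and $k \geqslant 1$ are not explicitly covered by \eqref{eq-p-nk} (which defines $p_{k,n}$ only for $n \neq 0$ together with $p_{0,0} = 1$), so one must first extend that definition consistently with the eigenspace decomposition of $L^2(M,\mu)$ before invoking the projection identity used in (iv). All interchanges of sum and integral above are justified by the Jacobi polynomial estimate already employed in Proposition~\ref{p.HeatKernel}.
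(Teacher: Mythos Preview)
Your proposal is correct and follows essentially the same approach as the paper: derive \eqref{eq-qt-explicit} from the Gaussian convolution identity \eqref{eq-qt-conv} and the Fourier integral $\int_\R \zeta(t,\varphi)e^{-in\varphi}\,d\varphi=e^{-n^2t}$, then verify (i)--(v) via the real spectral expansion $q_t=\sum_{k,n\geqslant 0}e^{\lambda'_{k,n}t}p_{k,n}$ together with the projection identity for the $p_{k,n}$. The only minor deviations are that the paper reads positivity directly off the explicit formula \eqref{eq-q-kernel-1} for $q(t,\cdot)$ rather than the convolution, and obtains the projection identity \eqref{eq-p-nk-orthog} by expanding Chapman--Kolmogorov for $p_t$ and matching coefficients rather than citing Lemma~\ref{lemma-spectrum}; the bookkeeping gap you flag concerning $p_{k,0}$ for $k\geqslant 1$ is present in the paper's notation as well.
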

\begin{proof}
To obtain \eqref{eq-qt-explicit}, we just need to combining \eqref{eq-qt-conv} and \eqref{eq-kernel}, and obtain that
\begin{align}\label{eq-qt-exp}
q_t(x,y)&= \int_\R \zeta(t, \varphi)p(t,r(x,y),\theta(x,y)-\varphi)d\varphi \notag \\
&=\sum_{n=-\infty}^{\infty} \sum_{k=0}^\infty e^{\lambda_{k,n} t}e^{in\theta(x,y)}\Phi_{k,n}(r(x,y))\left( \int_\R \zeta(t, \varphi) e^{-in\varphi}d\varphi \right)
\end{align}
where $\lambda_{k,n}=-(4k(k+|n|+1)+2|n|)$, $\Phi_{k,n}(r)=a_{k,n} (\cos r)^{|n|}P_k^{0, |n|}(\cos 2r)$, and $a_{k,n}=2k+|n|+1 $.
Note that
\[
 \int_\R \zeta(t, \varphi) e^{-in\varphi}d\varphi = \int_\R \zeta(t, \varphi) e^{-i|n|\varphi}d\varphi=e^{-n^2t}  \quad\text{for all }n\in \mathbb{Z},
\]
we then obtain the desired expression.

Next to show that $q_t(x,y)$ is a heat kernel, firstly (i) and (ii) easily follow from \eqref{eq-Riem-kernel}. To see (iii), we rewrite $q_t$ as  
\begin{align}\label{eq-qt-spect}
q_t(x,y)
&=\sum_{n=0}^{\infty} \sum_{k=0}^\infty p_{k,n} (x,y)e^{\lambda'_{k,n} t}
\end{align}
where $p_{k,n}(x,y)$ are as given in \eqref{eq-p-nk}. Therefore we have that
\begin{equation}\label{eq-qt-iii}
\int_Mq_t(x,y)d\mu(y)=\int_M  \sum_{n=0}^{\infty} \sum_{k=0}^\infty p_{k,n} (x,y)e^{\lambda'_{k,n} t}d\mu(y).
\end{equation}
 Using the fact that $p_{k,n}(x,y)$ form an orthonormal basis for $L^2(\mu)$ and $p_{0,0}(x,y)=1$,  we have that 
 \[
 \int_M p_{k,n}(x,y)d\mu(y)= \int_M p_{k,n}(x,y)p_{0,0}(x,y)d\mu(y)=\delta_{0,0}(k,n).
 \]
 Hence 
 \[
  \int_Mq_t(x,y)d\mu(y)=1.
 \]
 Next we show (iv). By \eqref{eq-qt-spect} we know that
\begin{align}\label{eq-qt-iv}
&\int_Mq_s(x,z)q_t(z,y)d\mu(z)\notag\\
&=\int_M \sum_{n,m=0}^{\infty} \sum_{k,\ell=0}^\infty p_{k,n} (x,z)p_{\ell,m} (z,y)e^{(\lambda'_{k,n}s+\lambda'_{\ell,m} t)}d\mu(z)
 \end{align}
Moreover, due to the fact that $p_t(x,y)$ is a heat kernel and satisfies the Chapman-Kolmogorov equation, we have that
 \[
\int_Mp_s(x,z)p_t(z,y)d\mu(z)=p_{s+t}(x,y).
 \]
which implies that for all $s,t>0$ and $x,y\in M$,
\[
\int_M \sum_{n,m=0}^{\infty} \sum_{k,\ell=0}^\infty p_{k,n} (x,z)p_{\ell,m} (z,y)e^{(\lambda_{k,n}s+\lambda_{\ell,m} t)}d\mu(z)
=\sum_{n=0}^{\infty} \sum_{kl=0}^\infty p_{k,n} (x,y)e^{\lambda_{k,n}(s+t)}
\]
Identifying each terms on both sides, we obtain that 
\begin{equation}\label{eq-p-nk-orthog}
\int_M p_{k,n} (x,z)p_{\ell,m} (z,y)d\mu(z)=\delta_{k,n}(\ell,m)p_{n,k} (x,y), \quad\text{for all }n,k,m,\ell \ge0.
\end{equation}
Plugging these back into \eqref{eq-qt-iv} we then have that
\begin{align}\label{eq-qt-iv-a}
&\int_Mq_s(x,z)q_t(z,y)d\mu(z)= \sum_{n=0}^{\infty} \sum_{k=0}^\infty p_{k,n} (x,y)e^{\lambda_{k,n}(s+ t)}e^{-sn^2}e^{-tn^2}d\mu(z)
 \end{align}
We can then obtain the conclusion. 

 Lastly, let us prove (v). Again using \eqref{eq-qt-spect} we have for any $f\in L^2(\mu)$ that
\begin{align*}
 \int_M q_t(x,y)f(y)d\mu(y)&=\int_M \sum_{n=0}^{\infty} \sum_{k=0}^\infty p_{k,n} (x,y)e^{\lambda'_{k,n} t} f(y)d\mu(y) \\
 &= \sum_{n=0}^{\infty} \sum_{k=0}^\infty  C_{k,n}(x)  e^{\lambda'_{k,n} t}
 \end{align*}
 where $C_{k,n}(x) =\int_M  p_{k,n} (x,y) f(y)d\mu(y) $. By \eqref{eq-p-nk-orthog} we know that $C_{k,n}(x)$ are orthogonal in $L^2(\mu)$.
 Hence 
 \[
  \int_M \left( \int_M q_t(x,y)f(y) d\mu(y) -f(y)\right)^2d\mu(y) =  \sum_{n=0}^{\infty} \sum_{k=0}^\infty  \|C_{k,n}(x)\|^2_{L^2} ( e^{\lambda'_{k,n} t}-1)^2
 \]
 Clearly for each $n,k\ge0$, $\|C_{k,n}(x)\|^2_{L^2} ( e^{\lambda'_{k,n} t}-1)^2\to 0$ as $t\to0$. Moreover, since $ \|C_{k,n}(x)\|^2_{L^2} ( e^{\lambda'_{k,n} t}-1)^2\leqslant  \|C_{k,n}(x)\|^2_{L^2}$ and 
 \[
  \sum_{n=0}^{\infty} \sum_{k=0}^\infty  \|C_{k,n}(x)\|^2_{L^2} =\|f\|_{L^2}^2<\infty,
 \]
 by Dominated Convergence Theorem we know that 
 \[
  \sum_{n=0}^{\infty} \sum_{k=0}^\infty  \|C_{k,n}(x)\|^2_{L^2} ( e^{\lambda'_{k,n} t}-1)^2\to 0,\quad \text{as }t\to0.
 \]
 We thus complete the proof.
\end{proof}

It is a standard argument   to construct a symmetric Dirichlet form $\mathcal{E}'$ on  $(M, \delta, \mu)$ associated to the heat kernel $q_t(x,y)$, and thus a self-adjoint operator $L'$. 

\begin{corollary}\label{prop-qt-LDP}
The metric measure space $(M,\delta,\mu)$ can be endowed with a symmetric Dirichlet form $\mathcal{E}'$ admitting a heat kernel $q_t$. Moreover, for all $x,y\in M$
\begin{align}\label{eq-LDP}
\lim_{t\to0}-4t \log q_t(x,y)=\delta^2(x,y).
\end{align}
\end{corollary}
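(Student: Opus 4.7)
The corollary has two independent parts: existence of the Dirichlet form $\mathcal{E}'$, and the Varadhan-type identity \eqref{eq-LDP}. The first is a direct application of the standard heat-kernel/Dirichlet-form correspondence recalled in Section~\ref{sec-Diri-M}; the second reduces to the explicit Gaussian asymptotics for $q(t,\cdot)$ packaged into Definition~\ref{def-q}.

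For the Dirichlet form, I would simply invoke the correspondence. Proposition~\ref{lemma-q-t} has already verified that $q_t$ satisfies axioms (i)--(v) of Definition~\ref{d.HeatKernel}, so
\[
Q_t f(x):=\int_M q_t(x,y)\,f(y)\,d\mu(y)
\]
defines a strongly continuous symmetric Markovian semigroup on $L^{2}(M,\mu)$. Its infinitesimal generator $L'$, obtained as the $L^{2}$-limit of $(Q_t f-f)/t$, is self-adjoint and non-positive by Hille--Yosida, and the associated form
\[
\mathcal{E}'(f,g) := \lim_{t\to 0^{+}}\Big\langle \tfrac{f-Q_t f}{t},\,g\Big\rangle_{L^{2}},\qquad \mathcal{D}_{\mathcal{E}'}=\mathcal{D}_{L'},
\]
is a symmetric, closed, Markovian Dirichlet form.

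For the Varadhan identity, fix $x,y\in M$ and abbreviate $r=r(x,y)$, $\theta=\theta(x,y)$, $\delta=\delta(x,y)=\arccos(\cos r\cos\theta)$. The plan is to substitute directly into expansion~\eqref{eq-q-kernel-4}. Assumption~\ref{Assump1} forces $r<\pi/2$ and $\theta<\pi$, so $\cos r>0$ and $\cos\theta>-1$, whence $\cos r(1+\cos\theta)>0$, i.e.\ $\cos r\cos\theta>-\cos r\geqslant -1$. Thus the pair $(x,y)$ is bounded away from the cut value $-1$ by some $\epsilon=\epsilon(x,y)>0$, and \eqref{eq-q-kernel-4} yields
\[
q_t(x,y)=\frac{\sqrt{\pi}\,e^{t}}{4t^{3/2}}\cdot\frac{\delta}{\sin\delta}\,e^{-\delta^{2}/(4t)}\bigl(1+R'(t,\cos r\cos\theta)\bigr),
\]
with $|R'|\leqslant C_{\epsilon}\,e^{-c_{\epsilon}/t}$. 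Taking $-4t\log$ decomposes the limit into three contributions: the polynomial--trigonometric prefactor contributes $O(t\log t)\to 0$, the Gaussian exponent contributes exactly $\delta^{2}$, and $-4t\log(1+R')$ is $O(t\,e^{-c_{\epsilon}/t})\to 0$. Summing gives \eqref{eq-LDP}. The only mild subtlety is the uniformity of the remainder $R'$, which is already supplied by Definition~\ref{def-q}; no genuine obstacle arises here, because the strict bound $r<\pi/2$ from Assumption~\ref{Assump1} prevents $\delta$ from reaching the cut locus $\pi$, so the ``non-Gaussian'' images of the Poisson summation formula are all exponentially suppressed relative to the $k=0$ term.
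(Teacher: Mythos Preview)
Your proposal is correct and follows essentially the same route as the paper's proof: the first assertion is handled by invoking the heat-kernel/Dirichlet-form correspondence of Section~\ref{sec-Diri-M} (relying on Proposition~\ref{lemma-q-t}), and the Varadhan limit is obtained by plugging the asymptotic expansion~\eqref{eq-q-kernel-4} into the definition~\eqref{eq-Riem-kernel}. Your version is simply more explicit---in particular, your check that $\cos r\cos\theta>-1$ (so that \eqref{eq-q-kernel-4} is applicable) and your itemized treatment of the three contributions to $-4t\log q_t$ are details the paper leaves to the reader.
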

\begin{proof}
The first assertion follows from classical arguments as presented in Section \ref{sec-Diri-M}. To obtain \eqref{eq-LDP}, we combine \eqref{eq-q-kernel-4} and \eqref{eq-Riem-kernel} and obtain that
\[
\lim_{t\to0}-4t \log q_t(x,y)=\lim_{t\to0}-4t \log q(t, \delta(x,y))=\delta^2(x,y).
\]
\end{proof}

Next we will show that $ (M,\delta,\mu)$ is a length space. First let us address the  locally compactness of it.
\begin{lemma}\label{lemma-local-cpmt}
Let $(M, \delta, \mu)$ be as given in  Corollary \ref{prop-qt-LDP}. Then $(M,\delta)$ is locally compact. 
\end{lemma}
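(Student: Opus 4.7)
The plan is to establish the stronger statement that $(M,\delta)$ is in fact compact, from which local compactness is immediate. The strategy is to embed $M$ into the Hilbert space $L^2(M,\mu)$ through the heat kernel and exploit the Hilbert-Schmidt (hence compact) nature of the operator $Q_{t/2}$ associated with $q_t$.

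First I would fix a time $t>0$ small enough that the asymptotic expansion \eqref{eq-q-kernel-4} guarantees the function $\delta \mapsto q(t,\cos\delta)$ to be continuous and \emph{strictly} decreasing on $[0,\pi)$; in particular, it attains its strict maximum at $\delta = 0$. Then I would define
\[
\iota \colon M \to L^2(M,\mu), \qquad \iota(y) := q_t(\cdot,y).
\]
Using the Chapman--Kolmogorov property (item (iv) in Proposition \ref{lemma-q-t}) together with $q_t(x,y) = q(t,\delta(x,y))$, the identity
\[
\|\iota(y) - \iota(y')\|_{L^2}^2 \;=\; 2\,q(2t,1) - 2\,q\bigl(2t,\cos\delta(y,y')\bigr)
\]
shows that $\iota$ is continuous and injective, and the strict monotonicity chosen above means that $L^2$-convergence of $\iota(y_n)$ forces $\delta(y_n,y_m)\to 0$.

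Next I would show that $\iota(M)$ is relatively compact in $L^2(M,\mu)$. The factorization $\iota(y) = Q_{t/2}\bigl(q_{t/2}(\cdot,y)\bigr)$, combined with the uniform bound $\|q_{t/2}(\cdot,y)\|_{L^2}^2 = q(t,1)$, reduces the question to compactness of $Q_{t/2}$ as an operator on $L^2(M,\mu)$. Compactness follows by checking that $Q_{t/2}$ is Hilbert-Schmidt: from the spectral expansion \eqref{eq-qt-spect} and the orthonormality relations \eqref{eq-p-nk-orthog} for the $p_{k,n}$, the Hilbert-Schmidt norm is essentially $\sum_{k,n} e^{\lambda'_{k,n} t}$, which converges since the eigenvalues $|\lambda'_{k,n}|$ grow polynomially with finite multiplicities, exactly as in Proposition \ref{p.HeatKernel}.

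Putting these pieces together, any sequence $\{y_n\}\subset M$ admits a subsequence along which $\iota(y_n)$ converges in $L^2(M,\mu)$; the displayed norm identity then forces this subsequence to be Cauchy in $(M,\delta)$, and completeness of $(M,\delta)$ supplies a limit in $M$. Hence $(M,\delta)$ is sequentially compact, therefore compact, and in particular locally compact. The most delicate ingredient is the strict monotonicity of $\delta \mapsto q(t,\cos\delta)$ on $[0,\pi)$; for a generic $t$ this would require more care, but since only one value of $t$ is needed, choosing $t$ small enough makes the Gaussian factor in \eqref{eq-q-kernel-4} dominate all correction terms and reduces the claim to a routine calculus exercise.
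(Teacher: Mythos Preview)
Your approach is essentially the paper's: both embed $M$ into $L^2(M,\mu)$ via $x\mapsto q_t(\cdot,x)$, exploit compactness of the heat operator (you invoke Hilbert--Schmidt directly, the paper passes through weak convergence and then applies $Q_t$ to upgrade to strong), compute $L^2$ distances via Chapman--Kolmogorov, and read off $\delta$-Cauchyness from the small-time asymptotic \eqref{eq-q-kernel-4}. The one substantive difference is that you aim for global compactness while the paper restricts to balls of radius $R<1$; that localization keeps $\delta$ bounded away from $\pi$ so that \eqref{eq-q-kernel-4} applies uniformly, whereas your global version needs the strict monotonicity of $\delta\mapsto q(t,\cos\delta)$ on all of $[0,\pi)$---true (this is the radial $S^3$ heat kernel), but not quite delivered by \eqref{eq-q-kernel-4} alone, since the remainder bound there is only uniform on $(-1+\epsilon,1]$.
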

\begin{proof}
It is equivalent to show that all closed ball in $M$ is compact. Let $B\subset M$ be a closed ball of radius $R\in(0,1)$ and centered at $x_0\in M$. Take any sequence $\{x_k\}_{k\ge1}$, it suffices to show that $\{x_k\}$ is a Cauchy sequence with respect to $\delta$, namely $\delta(x_k,x_\ell)\to0$ as $k,\ell\to\infty$.

Let $u_k(\cdot):=q_t(x_k,\cdot)$ for all $k\ge1$. Clearly for fixed $t\in (0,1/2)$, by \eqref{eq-q-kernel-4} we have that
\[
\int_M u_k^2d\mu=q_{2t}(x_k, x_k)=\frac{\sqrt{\pi}e^{2t}}{4(2t)^{3/2}}(1+o(t)).
\]
This implies that $\{u_k\}$ is bounded in $L^2(\mu)$ hence weakly converges to a function in $L^2(\mu)$, say $u_\infty$. Next let
\begin{equation}\label{eq-vk}
v_k(\cdot):=\int_{M}u_k(z)q_t(z,\cdot)d\mu (z)=q_{2t}(x_k, \cdot).
\end{equation}
Using Chapman-Kolmogorov we know that 
\[
\int_{M}v_k^2(x)d\mu(x)=\int_{M}q_{2t}^2(x_k, x)d\mu(x)=q_{4t}(x_k, x_k)
\]
hence $v_k\in L^2(\mu)$. Moreover, from \eqref{eq-vk} and using bounded convergence theorem we know that for all $x\in M$
\[
\lim_{k\to\infty} v_k(x)=\lim_{k\to\infty} \int_{M}u_k(z)q_t(z,x)d\mu (z)= \int_{M}u_\infty(z)q_t(z,x)d\mu (z).
\]
We denote $v_\infty (x):=  \int_{M}u_\infty(z)q_t(z,x)d\mu (z)$, then by  dominated convergence theorem we know that $v_k$ converges to $v_\infty$ in $L^2(\mu)$. This implies that $\{v_k\}$ is a Cauchy sequence in $L^2(\mu)$. Note that
\begin{align}\label{eq-vk-Cauchy}
\|v_k-v_\ell\|_{L^2}^2&=\|v_k\|_{L^2}^2+\|v_\ell\|_{L^2}^2-2\int_M v_kv_\ell d\mu, \notag \\
&= q_{4t}(x_k, x_k)+q_{4t}(x_\ell, x_\ell)-2q_{4t}(x_k, x_\ell).
\end{align}
Moreover from \eqref{eq-q-kernel-4} we have for any $x,y\in B$,
\[
q_{4t}(x,y)= \frac{\sqrt{\pi}e^{4t}}{4(4t)^{3/2}}\frac{ \delta(x,y)}{\sin\delta(x,y)}e^{-\frac{\delta^2(x,y)}{8t}}(1+R'(t,\delta(x,y))).
\]
Plugging the above equality into \eqref{eq-vk-Cauchy} we have 
\[
\|v_k-v_\ell\|_{L^2}^2=\frac{\sqrt{\pi}e^{4t}}{4(4t)^{3/2}}\left(2+2R'(t,0)-2\frac{ \delta(x_k,x_\ell)}{\sin\delta(x_k,x_\ell)}e^{-\frac{\delta^2(x_k,x_\ell)}{8t}}(1+R'(t,\delta(x_k,x_\ell))) \right).
\]
Using the fact that $|R'(t,x)|\leqslant  Ce^{-\frac{c}{t}}$ for some $C, c>0$ we know that for all $t$ sufficiently small,
 \[
\|v_k-v_\ell\|_{L^2}^2\to 0 \quad\text{implies}\quad \frac{ \delta(x_k,x_\ell)}{\sin\delta(x_k,x_\ell)}e^{-\frac{\delta^2(x_k,x_\ell)}{8t}}\to 1.
\]
At last we can conclude that $\{x_k\}$ is a Cauchy sequence with respect to $\delta$.
 \end{proof}
\begin{proposition}\label{prop-lengthy}
Let $(M, \delta, \mu)$ be as given in  Corollary \ref{prop-qt-LDP}, then it is a geodesic space. 
\end{proposition}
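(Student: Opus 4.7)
The plan is to prove that $(M,\delta)$ is a geodesic space by invoking the Hopf--Rinow--Cohn-Vossen theorem, which requires completeness, local compactness, and the length-space property. Completeness was established in the proposition preceding Corollary \ref{prop-qt-LDP}, and local compactness is exactly Lemma \ref{lemma-local-cpmt}. Thus the substantive task is to verify that $\delta$ is an intrinsic (length) distance on $M$.

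To establish the length property, I would exploit the Dirichlet form $\mathcal{E}'$ admitting $q_t$ as its heat kernel (Corollary \ref{prop-qt-LDP}). First, I would verify that $\mathcal{E}'$ is strongly local by repeating the argument sketched in the remark following Lemma \ref{lemma-spectrum}: the short-time asymptotics \eqref{eq-q-kernel-3}--\eqref{eq-q-kernel-4} yield the Gaussian-type bound $q_t(x,y)\asymp C\,t^{-3/2}e^{-\delta^2(x,y)/(4t)}$, whence Grigor'yan's dichotomy in \cite{grigor2008dichotomy} forces strong locality. Next, introduce the intrinsic distance of $\mathcal{E}'$,
\begin{equation*}
d_{\mathcal{E}'}(x,y):=\sup\{u(x)-u(y)\colon u\in \mathcal{D}_{\mathcal{E}'}^{\mathrm{loc}}\cap C(M),\ \Gamma(u,u)\leq 1\ \mu\text{-a.e.}\},
\end{equation*}
and invoke the Varadhan-type short-time formula of Ramirez--Sturm--Ariyoshi--Hino for strongly local regular Dirichlet forms,
\begin{equation*}
\lim_{t\to 0^+} -4t\log q_t(x,y)=d_{\mathcal{E}'}^2(x,y).
\end{equation*}
Comparison with \eqref{eq-LDP} gives $d_{\mathcal{E}'}\equiv \delta$ on $M\times M$.

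Once $\delta$ is identified with the intrinsic distance of a strongly local regular Dirichlet form, I would apply Sturm's theorem, which asserts that such an intrinsic distance is a length distance whenever it metrizes the underlying topology. Since $d_{\mathcal{E}'}=\delta$ obviously induces the $\delta$-topology, this promotes $(M,\delta)$ to a length space; combined with completeness and local compactness, Hopf--Rinow--Cohn-Vossen then provides a length-minimizing curve between any two points, so $(M,\delta)$ is geodesic. The step I expect to be the principal obstacle is the rigorous confirmation that the intrinsic distance $d_{\mathcal{E}'}$ generates the original $\delta$-topology---this compatibility condition underpins Sturm's length-space theorem---and with it the uniform (rather than merely pointwise) validity of the Gaussian upper bound on $q_t$ needed to invoke strong locality via \cite{grigor2008dichotomy}.
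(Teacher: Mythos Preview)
Your proposal takes a genuinely different route from the paper's proof. The paper proceeds by directly establishing the midpoint property: starting from the Chapman--Kolmogorov identity $\int_M q_t(x,z)q_t(z,y)\,d\mu(z)=q_{2t}(x,y)$ and the small-time asymptotic $q_t(x,z)=e^{-\delta^2(x,z)/(4t)+o(1/t)}$ from \eqref{eq-LDP}, one raises both sides to the power $t$ (so the left side becomes an $L^{1/t}$-norm) and lets $t\to 0$ to obtain
\[
\inf_{z\in M}\bigl\{\delta^2(x,z)+\delta^2(z,y)\bigr\}=\tfrac12\,\delta^2(x,y).
\]
Local compactness (Lemma~\ref{lemma-local-cpmt}) gives a point $m$ realizing the infimum, and the elementary inequality $\delta^2(x,m)+\delta^2(y,m)\geqslant \tfrac12\delta^2(x,y)+\tfrac12(\delta(x,m)-\delta(m,y))^2$ forces $\delta(x,m)=\delta(m,y)=\tfrac12\delta(x,y)$. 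The conclusion then follows from \cite[Theorem~2.4.26]{BBI01}. This argument is essentially self-contained, using only ingredients already proved in Proposition~\ref{lemma-q-t} and Corollary~\ref{prop-qt-LDP}.

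Your route through the intrinsic distance of $\mathcal{E}'$ and Sturm's length-space theorem is conceptually reasonable, but it imports several heavy results whose hypotheses have not been secured in the present setting. Two points in particular: (i) the pointwise Varadhan identity you invoke is more delicate than the set-to-set version of Ariyoshi--Hino and generally requires regularity of the Dirichlet form, which has not been established for $\mathcal{E}'$ here; (ii) Sturm's theorem requires the intrinsic distance to induce the \emph{ambient} topology on $M$ (the one in Assumption~\ref{Assump2}), not merely the $\delta$-topology, so the equality $d_{\mathcal{E}'}=\delta$ by itself does not close the loop---you would still need to show that the $\delta$-topology coincides with the original topology, which you correctly flag as the main obstacle. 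The paper's Laplace-type midpoint argument sidesteps all of this by using \eqref{eq-LDP} directly, and is both shorter and lighter on external machinery.
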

\begin{proof}
We use similar argument as in \cite{CarronTewodrose2022}. We just need to show that for any $x,y\in M$, there exists an $m\in M$ such that 
\[
\delta(x,m)=\delta(m,y)=\frac12\delta(x,y).
\]
First consider the heat kernel $q_t$ on $M$. By Lemma \ref{lemma-q-t} (iv) we know that 
\begin{equation}\label{eq-qt-chap-Kol}
\int_M q_t(x,z)q_t(z,y)d\mu(z)=q_{2t}(x,y).
\end{equation}
Using the fact that for sufficiently small $t_0>0$, and all $t\in(0,t_0)$, it holds that 
\[
q_t(x,z)=e^{-\frac{\delta^2(x,z)}{4t}+o(\frac{1}{t})}
\]
for all $x,z\in M$. Hence \eqref{eq-qt-chap-Kol} can be rewritten as 
\[
\int_M e^{-\frac{\delta^2(x,z)+\delta^2(z,y) }{4t}+o(\frac{1}{t})}d\mu(z)=  e^{-\frac{\delta^2(x,y) }{8t}+o(\frac{1}{t})}
\]
Raising both sides to the power $t$ we have
\[
\left\|e^{-\frac{\delta^2(x,z)+\delta^2(z,y) }{4}+o(1)}\right\|_{L^{1/t}(\mu)}=e^{-\frac{\delta^2(x,y) }{8}+o(1)}.
\]
As $t\to0$ we then obtain that 
\[
\inf_{z\in M}\{\delta^2(x,z)+\delta^2(z,y)\}=\frac12\delta^2(x,y)
\]
From Lemma \ref{lemma-local-cpmt} we know that $(M,\delta)$ is locally compact, hence the minimum of the left hand side of the above equality can be attained, say by $m\in M$. That is 
\[
\delta^2(x,m)+\delta^2(m,y)=\frac12\delta^2(x,y).
\]
However, since for any $y\in M$ we have 
\[
\delta^2(x,m)+\delta^2(y,m)\geqslant \frac12 \delta^2(x,y)+\frac12(\delta(x,m)-\delta(m,y))^2.
\]
Combining the above two equalities we then obtain that 
\[
\delta(x,m)=\delta(y,m).
\]
By \cite[Theorem 2.4.26]{BBI01} we can then obtain the conclusion. 
\end{proof}

\subsection{Volume growth and dimension estimate of $(M,\delta,\mu)$ }
In this section, we want to estimate the Hausdorff dimension of $(M,\delta,\mu)$ using information contained in its heat kernel. A standard strategy is to obtain the volume growth estimate for a geodesic ball in it. 

\begin{lemma} \label{lemma-Hasud-dim}
Let $\delta$ be the distance as in Lemma \ref{lemma-delta}, and let $B_\delta(x,\rho)$ denote the ball  centered at $x\in (M,\delta)$ of radius $\rho>0$.  Then 
\[
 \mu(B_\delta(x,\rho))\leqslant C\, \rho^3.
\]
for some constant $C>0$.
\end{lemma}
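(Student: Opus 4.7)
The plan is to derive the volume upper bound from the near-diagonal Gaussian lower bound on the heat kernel $q_t$ together with its stochastic completeness, in the standard way: if $q_t$ is large on a ball and integrates to at most $1$ over all of $M$, the ball cannot be too large. This is a staple argument in heat-kernel analysis (cf.\ the method used in \cite{CarronTewodrose2022}), and the explicit expansion \eqref{eq-q-kernel-4} makes it essentially turnkey in our setting.

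First I would reduce to the regime of small $\rho$. By Proposition \ref{p.HeatKernel}, $\mu(M)=1$, so whenever $\rho \geqslant \rho_0$ for some fixed threshold we automatically have $\mu(B_\delta(x,\rho)) \leqslant 1 \leqslant \rho_0^{-3}\rho^3$. Hence I may assume $0 < \rho < \pi/2$, so that for every $y \in B_\delta(x,\rho)$ one has $\cos\delta(x,y) \in (0,1]$, which lies comfortably inside the domain of validity $(-1+\varepsilon,1]$ of the expansion \eqref{eq-q-kernel-4}. Combining \eqref{eq-q-kernel-4} with \eqref{eq-Riem-kernel}, together with the elementary inequality $\tfrac{s}{\sin s}\geqslant 1$ for $s\in[0,\pi/2]$ and the fact that the error $R'(t,\cdot)$ tends to zero uniformly as $t\to 0$, I obtain a constant $c>0$ and a threshold $t_0>0$ such that
\[
q_t(x,y) \;\geqslant\; \frac{c}{t^{3/2}}\, e^{-\delta(x,y)^2/(4t)} \;\geqslant\; \frac{c}{t^{3/2}}\, e^{-\rho^2/(4t)}
\]
for every $t \in (0,t_0)$ and every $y\in B_\delta(x,\rho)$.

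Integrating this lower bound over $B_\delta(x,\rho)$ and using the stochastic completeness established in Proposition \ref{lemma-q-t} yields
\[
1 \;=\; \int_M q_t(x,y)\, d\mu(y) \;\geqslant\; \int_{B_\delta(x,\rho)} q_t(x,y)\, d\mu(y) \;\geqslant\; \frac{c\,\mu(B_\delta(x,\rho))}{t^{3/2}}\, e^{-\rho^2/(4t)},
\]
so that $\mu(B_\delta(x,\rho)) \leqslant c^{-1} t^{3/2} e^{\rho^2/(4t)}$ for all admissible $t$. The optimal choice is $t=\rho^2$, provided $\rho^2 < t_0$; substituting gives $\mu(B_\delta(x,\rho)) \leqslant c^{-1} e^{1/4}\rho^3$, which is the desired bound for all sufficiently small $\rho$. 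Merging this with the trivial bound for $\rho$ bounded below by $\sqrt{t_0}$ produces a single constant $C$ valid uniformly in $x$ and $\rho$.

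I do not foresee a genuine obstacle: the only small technical point is to ensure the lower bound $\frac{c}{t^{3/2}} e^{-\delta^2/(4t)}$ holds with a constant that is uniform in $y$ over the ball, which is where restricting to $\rho < \pi/2$ is convenient, since it keeps $\cos\delta(x,y)$ uniformly away from $-1$ and lets me apply \eqref{eq-q-kernel-4} with a fixed $\varepsilon$.
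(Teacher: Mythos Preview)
Your proposal is correct and follows essentially the same approach as the paper: derive a Gaussian lower bound $q_t(x,y)\geqslant c\,t^{-3/2}e^{-\delta(x,y)^2/(4t)}$ from \eqref{eq-q-kernel-4}, integrate over the ball, use stochastic completeness to bound by $1$, and then optimize by choosing $t$ proportional to $\rho^2$. Your treatment is in fact slightly more careful than the paper's, since you explicitly handle large $\rho$ via $\mu(M)=1$ and justify uniformity of the constant by restricting $\rho<\pi/2$ so that \eqref{eq-q-kernel-4} applies with a fixed $\varepsilon$.
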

\begin{proof}
Combine Proposition \ref{lemma-q-t} and \ref{eq-q-kernel-4} we know that there exists a $t_0>0$, $\rho_0>0$ and some $C>0$ such that for all $t\in[0,t_0)$, $\rho\in(0,\rho_0)$, $x\in M$ and $y\in B_\delta(x, \rho)$
\[
\frac{1}{Ct^{3/2}} e^{-\frac{\delta(x,y)^2}{4t}}\leqslant q(t,x,y)\leqslant \frac{C}{t^{3/2}} e^{-\frac{\delta(x,y)^2}{4t}}.
\]
From the lower bound in the above estimate we know that 
\begin{align*}
 \int_{B_\delta(x,\rho)}\frac{e^{-\frac{\delta(x,y)^2}{4t}}}{Ct^{3/2}}d\mu(y)
& \leqslant  \int_{B_\delta(x,\rho)} q_t(x,y)d\mu(y)\leqslant \int_{M} q_t(x,y)d\mu(y) \leqslant 1.
\end{align*}
But the left hand side is bounded from below by
\[
\int_{B_\delta(x,\rho)}\frac{e^{-\frac{\rho^2}{4t}}}{Ct^{3/2}}d\mu(y)=\frac{e^{-\frac{\rho^2}{4t}}}{Ct^{3/2}}\mu(B_\delta(x,\rho)).
\]
Therefore we obtain that for any $0<t<t_0$
\[
\mu(B_\delta(x,\rho))\leqslant \frac{e^{\frac{\rho^2}{4t}}}{C} t^{3/2}. 
\]
Take $t=t_0\rho^2$ such that $t<t_0$ we then obtain that for any $x\in M$ and $\rho\leqslant \rho_0\wedge 1$
\[
\mu(B_\delta(x,\rho))\leqslant C'\,  \rho^3,
\]
for some constant  $C'>0$ that depends on $t_0$. 
\end{proof}

As a direct consequence, we can readily conclude the following lower bound on the Hausdorff dimension for $(M,\delta,\mu)$. 

\begin{proposition}\label{cor-dim-3}
The Hausdorff dimension of $(M,\delta,\mu)$ is at least $3$. 
\end{proposition}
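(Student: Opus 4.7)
The plan is to apply the classical mass distribution principle: since $\mu$ is a probability measure (by Proposition~\ref{p.HeatKernel}) and we have the uniform upper ball bound $\mu(B_\delta(x,\rho))\leqslant C\rho^3$ for small $\rho$ from Lemma~\ref{lemma-Hasud-dim}, these two facts together force $\mathcal{H}^3(M,\delta)>0$, which by definition of Hausdorff dimension gives $\dim_H(M,\delta)\geqslant 3$.

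To execute the mass distribution principle, fix $\eta\in(0,\rho_0)$ where $\rho_0$ is the threshold from Lemma~\ref{lemma-Hasud-dim}, and let $\{U_i\}_{i\geqslant 1}$ be any countable cover of $M$ with $\operatorname{diam}_\delta(U_i)\leqslant\eta$ for every $i$. Choosing an arbitrary $x_i\in U_i$, we have $U_i\subseteq B_\delta(x_i,\operatorname{diam}_\delta(U_i))$, so Lemma~\ref{lemma-Hasud-dim} yields $\mu(U_i)\leqslant C\operatorname{diam}_\delta(U_i)^3$. Summing and using countable subadditivity of $\mu$ together with $\mu(M)=1$,
\[
1 \;=\; \mu(M) \;\leqslant\; \sum_{i} \mu(U_i) \;\leqslant\; C\sum_i \operatorname{diam}_\delta(U_i)^3.
\]
Taking the infimum over all such covers, the Hausdorff pre-measure satisfies $\mathcal{H}^3_\eta(M)\geqslant 1/C$, and then sending $\eta\to 0$ gives $\mathcal{H}^3(M)\geqslant 1/C>0$. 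In particular $\mathcal{H}^s(M)=\infty$ for every $s<3$, so $\dim_H(M,\delta)\geqslant 3$.

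There is no real obstacle to overcome, as Lemma~\ref{lemma-Hasud-dim} is essentially tailor-made for this argument; the only minor point requiring care is that the volume bound holds only for radii below $\rho_0$, but this is harmless because the Hausdorff measure is defined by letting the diameter parameter tend to zero, so we may restrict to covers by sets of diameter at most $\rho_0$ from the outset.
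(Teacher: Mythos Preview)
Your argument is correct and is precisely the mass distribution principle, which is the content of (the easy direction of) Frostman's lemma; the paper's own proof simply invokes ``classical Frostman's Lemma'' without writing out the details. So your approach is essentially identical to the paper's, just made explicit.
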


\begin{proof}
The conclusion follows from classical Frostman's Lemma. 
\end{proof}

\subsection{The first isometry}\label{sec-eigen1} From \eqref{eq-qt-explicit} we know that when $k=0$, $n=1$, 
\begin{align}\label{eq-p01}
\lambda_{0,1}=-3,\quad p_{0,1}( \cdot,y)=4\cos \theta(\cdot,y)\cos r(\cdot,y)
\end{align}
and
\[
L\cos \theta(\cdot, y)\cos r(\cdot,y)=-2\cos \theta(\cdot,y)\cos r(\cdot,y).
\]
\begin{lemma}
Consider  the eigenspace $E_{0,1}$  of $L$ correspond to $\lambda_{0,1}=-2$, then $\dim E_{0,1}=4$, and 
\begin{align}\label{eq-E01}
E_{0,1}=\operatorname{Span}\{\cos \theta(x,\cdot)\cos r(x,\cdot), x\in M\}.
\end{align}
\end{lemma}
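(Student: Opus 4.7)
The plan is to leverage the fact, established in Lemma~\ref{lemma-spectrum}, that $P_{0,1}$ is the orthogonal projection of $L^2(M,\mu)$ onto $E_{0,1}$, realized as the integral operator with reproducing kernel $p_{0,1}(x,y)=4\cos\theta(x,y)\cos r(x,y)$ (see \eqref{eq-p-nk} and \eqref{eq-P-kn}). Both assertions of the lemma will then follow quickly from standard facts about finite-rank orthogonal projections once this picture is in place.

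For the dimension, I would compute $\dim E_{0,1}$ as the trace of $P_{0,1}$. Since $p_{0,1}$ is bounded and $\mu$ is a probability measure by Proposition~\ref{p.HeatKernel}, the kernel lies in $L^{2}(M\times M,\mu\times\mu)$, so $P_{0,1}$ is Hilbert--Schmidt. Applying $P_{0,1}$ to its own kernel sections $p_{0,1}(\cdot,y_0)\in E_{0,1}$, the idempotency $P_{0,1}^2=P_{0,1}$ together with the symmetry of $p_{0,1}$ gives the reproducing identity $p_{0,1}(y,y)=\int_{M}p_{0,1}(y,x)^{2}\,d\mu(x)$. Integrating in $y$ and using $p_{0,1}(y,y)=4$ (because $r(y,y)=\theta(y,y)=0$) together with $\mu(M)=1$ yields
\[
\dim E_{0,1}=\operatorname{tr}(P_{0,1})=\int_{M\times M}p_{0,1}(x,y)^{2}\,d\mu(x)\,d\mu(y)=\int_{M}p_{0,1}(y,y)\,d\mu(y)=4.
\]

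For the span identity I would argue by double inclusion. On one hand, each function $x\mapsto p_{0,1}(x,y)$ is an eigenfunction of $L$ with eigenvalue $-2$ by \eqref{eq-eign-fun}, hence lies in $E_{0,1}$. On the other hand, every $f\in E_{0,1}$ satisfies $f=P_{0,1}f=\int_{M}p_{0,1}(\cdot,y)f(y)\,d\mu(y)$, so $f$ lies in the $L^{2}$-closed span of $\{p_{0,1}(\cdot,y):y\in M\}$; because $E_{0,1}$ is finite-dimensional by the dimension step, the closure is automatic. The symmetry of $r$ and $\theta$ then rewrites the spanning set as $4\{\cos\theta(x,\cdot)\cos r(x,\cdot):x\in M\}$, giving \eqref{eq-E01}.

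The only step that needs any care is the identification of the Hilbert--Schmidt norm of $P_{0,1}$ with the diagonal integral $\int_{M}p_{0,1}(y,y)\,d\mu(y)$; this is where the symmetry of the kernel and the self-reproducing identity for $p_{0,1}$ on $E_{0,1}$ enter (equivalently, one may invoke Mercer's theorem for the continuous symmetric positive semi-definite kernel $p_{0,1}$). Everything else is routine bookkeeping on top of Lemma~\ref{lemma-spectrum}, and the whole lemma amounts to a two-line reproducing-kernel computation.
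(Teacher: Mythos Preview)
Your proof is correct and follows essentially the same route as the paper's: both use that $P_{0,1}$ is the orthogonal projection onto $E_{0,1}$ with kernel $p_{0,1}(x,y)=4\cos\theta(x,y)\cos r(x,y)$, compute the dimension by evaluating the kernel on the diagonal and integrating (you phrase this via the trace/Hilbert--Schmidt identity, the paper via an explicit orthonormal basis expansion $\sum_i\varphi_i(x)\varphi_i(y)=p_{0,1}(x,y)$), and establish the span equality by the same double inclusion using $f=P_{0,1}f$. Your treatment of the closure issue in the span argument is in fact slightly more careful than the paper's.
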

\begin{proof}
Let $\{\varphi_1,\dots, \varphi_\ell\}$ be continuous functions on $(M,\delta, \mu)$  that form an orthonormal basis of $E_{0,1}$ in $L^2(M, \mu)$. Then for any $f\in L^2(M, \mu)$ and $x\in M$ we have that
\[
P_{0,1}f(x)=4\int_M\cos \theta(x,y)\cos r(x,y)f(y)d\mu(y)=\sum_{i=1}^\ell \left( \int_M \varphi_i(y)f(y)d\mu(y)\right) \varphi_i(x),
\]
where $P_{0,1}$ is the projection operator as defined in \eqref{eq-P-kn}.
Hence for $\mu$-a.s. $x, y\in M$ 
\[
\sum_{i=1}^\ell \varphi_i(x)\varphi_i(y)=4\cos \theta(x,y)\cos r(x,y),
\]
and thus $\sum_{i=1}^\ell \varphi_i(x)^2=4$ for $\mu$-a.s. $x\in M$. 
Integrate both sides with respect to $\mu$ we then obtain that $\ell=4$. 
Moreover, if we denote by
\[
V_1:=\operatorname{Span}\{\cos \theta(x,\cdot)\cos r(x,\cdot), x\in M\}
\]
From \eqref{eq-p01} it is clear that $V_1\subset E_{0,1}$. On the other hand, since $E_{0,1}$ is the image of $P_{0,1}$ in $L^2(M, \mu)$, for any $g\in E_{0,1}$ we have that
\[
g(x)=P_{0,1}g(x)=4\int_M \cos \theta(x,y)\cos r(x,y)g(y)d\mu(y)\in V_{1}.
\]
Hence we have $V_1=E_{0,1}$.
\end{proof}
Now consider the space $\mathcal{D}_1:=\operatorname{Span}\{\delta_x(\cdot), x\in M\}$, where $\delta_x(\cdot)$ denote the Dirac mass at $x$. Clearly $\mathcal{D}_1$ is a subspace of the algebraic dual space $V_1^*$ of $V_1$.   Moreover, note that for any $f\in V_1$ such that $\eta(f)=0$ for all $\eta\in \mathcal{D}_1$ we have $f(x)=0$ for all $x\in M$. This means that
\[
\mathcal{D}_1=V_1^*.
\]
\begin{lemma}
Let $\{x_1,\dots, x_4\}\in M$ be such that $\{\delta_{x_1},\dots, \delta_{x_4}\}$ is a basis of $V_1^*$ dual to the basis $\{\varphi_1,\dots, \varphi_4\}$ in $V_1$. Then for $\mu$-a.s. any $x,y\in M$ we have 
\begin{equation}\label{eq-V-1-metric}
\cos \theta(x,y)\cos r(x,y)=\sum_{i,j=1}^4 \cos \theta(x_i,x_j)\cos r(x_i,x_j)\varphi_i(x) \varphi_j(y).
\end{equation}
\end{lemma}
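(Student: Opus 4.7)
The strategy is to realize $F(x,y):=\cos\theta(x,y)\cos r(x,y)$ as a bivariate element of $V_1\otimes V_1$ and then expand it in the chosen basis and its dual. The key observation is that by the definition of $V_1$ in the previous lemma, for every fixed $y\in M$ the function $F(\cdot,y)=\cos\theta(\cdot,y)\cos r(\cdot,y)$ is a generator of $V_1$, hence lies in $V_1$. Therefore it admits a unique expansion
\[
F(\cdot,y)=\sum_{j=1}^4 c_j(y)\,\varphi_j(\cdot)
\]
for some coefficients $c_j(y)$ depending on $y$.

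To identify the coefficients I would use the dual basis hypothesis $\varphi_j(x_k)=\delta_x{}_k(\varphi_j)=\delta_{jk}$. Evaluating the expansion at $x_k$ (legitimately, since by the previous lemma the $\varphi_i$ are continuous representatives and $F(\cdot,y)$ is continuous in its first argument) yields $c_k(y)=F(x_k,y)$, so
\[
F(x,y)=\sum_{j=1}^4 F(x_j,y)\,\varphi_j(x), \qquad \mu\text{-a.s.}
\]
Now the symmetry of both $r$ and $\theta$ gives $F(x_j,y)=F(y,x_j)$, and the very same argument applied to $F(\cdot,x_j)\in V_1$ produces $F(y,x_j)=\sum_{i=1}^4 F(x_i,x_j)\,\varphi_i(y)$. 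Substituting back,
\[
F(x,y)=\sum_{i,j=1}^4 F(x_i,x_j)\,\varphi_i(y)\,\varphi_j(x) =\sum_{i,j=1}^4 A_{ij}\,\varphi_i(y)\,\varphi_j(x),
\]
where $A_{ij}:=\cos\theta(x_i,x_j)\cos r(x_i,x_j)$. Finally $A_{ij}=A_{ji}$ by symmetry of $r$ and $\theta$, so relabeling the summation indices produces exactly the formula \eqref{eq-V-1-metric}.

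The main technical subtlety is the transition from $L^2$-identities to pointwise identities needed in order to evaluate at the specific points $x_j$. The previous lemma delivers a version of the reproducing formula that a priori holds only $\mu$-almost everywhere, while the current statement requires evaluating the expansion at the four chosen points $x_j$. I would address this by fixing continuous representatives of $\varphi_1,\dots,\varphi_4$ (guaranteed by the lemma), arguing that $F(\cdot,y)-\sum_j c_j(y)\varphi_j(\cdot)$ is a continuous element of $V_1$ vanishing $\mu$-a.e., and invoking a support/regularity argument (continuity of the $\varphi_i$ plus the fact that the support of $\mu$ is the whole of $M$, which follows from the structure of the heat kernel and Proposition~\ref{lemma-q-t}) to promote the equality to a pointwise one at $x_1,\dots,x_4$. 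Once the dual basis relation and the evaluations at $x_j$ are legitimate, the remainder of the proof is the short linear-algebraic manipulation sketched above.
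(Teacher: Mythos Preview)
Your argument is correct and essentially identical to the paper's: both expand $F(x,y)=\cos\theta(x,y)\cos r(x,y)$ in the basis $\{\varphi_j\}$ using the dual basis $\{\delta_{x_j}\}$ to read off the coefficients as $F(x_j,\cdot)$, then apply the same expansion a second time together with the symmetry of $F$. The only cosmetic difference is that the paper fixes the first variable and expands in the second (so no final index relabeling is needed), and it does not pause to discuss the a.e.-versus-pointwise issue that you flag.
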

\begin{proof}
By definition we can write for $\mu$-a.s. any $x\in M$ that 
\begin{equation}\label{eq-mid-3}
\cos \theta(x,\cdot)\cos r(x,\cdot)=\sum_{j=1}^4 \delta_{x_j}\left(\cos \theta(x,\cdot)\cos r(x,\cdot) \right) \varphi_j(\cdot)
=\sum_{j=1}^4\cos \theta(x,x_j)\cos r(x,x_j)\varphi_j(\cdot).
\end{equation}
Moreover, note that for each $j=1,\dots, 4$,
\[
\cos \theta(x,x_j)\cos r(x,x_j) =\cos \theta(x_j,x)\cos r(x_j,x)=\sum_{i=1}^4 \cos \theta(x_i,x_j)\cos r(x_i,x_j)\varphi_i(x),
\]
Plugging it into \eqref{eq-mid-3} we then obtain the conclusion.
\end{proof}

\begin{definition}\label{def-beta-Q}
Clearly $\{\cos \theta(x_i,\cdot)\cos r(x_i,\cdot)\}_{i=1}^4$ forms a basis for $V_1$.
Let
\begin{align}\label{eq-a}
a_{ij}:= \cos \theta(x_i,x_j)\cos r(x_i,x_j)\quad  \text{ for all } 1\leqslant i,j\leqslant 4.
\end{align}
and define a bilinear form on $\R^4$ by letting $\beta_1:\R^4\times \R^4\to\R$ such that
\begin{equation}\label{eq-beta1}
\beta_1 (\zeta,\zeta^{\prime}):=\sum_{i,j=1}^4a_{ij}\zeta_i\zeta_j^{\prime}, \quad \zeta=(\zeta_1,\dots, \zeta_4), \ \zeta^{\prime}=(\zeta^{\prime}_1,\dots, \zeta^{\prime}_4)\in\R^4.
\end{equation}
Let $Q: \R^4\to \R$ be the associated quadratic form, i.e. for any $\zeta\in \R^4$
\begin{equation}\label{eq-Q}
Q(\zeta)=\beta_1(\zeta,\zeta).
\end{equation}
We also define a map $\Phi: M\to \R^4$ by letting
\begin{align}\label{eq-Phi}
\Phi(x):= (\varphi_1(x),\dots, \varphi_4(x)).
\end{align}
\end{definition}

\begin{lemma} 
Let $\Phi:M\to \R^4$ be as given in \eqref{eq-Phi}, and $Q: \R^4\to \R$ the quadratic form that is given in \eqref{eq-Q}. Then $\Phi$ and $Q$ are both injective.
\end{lemma}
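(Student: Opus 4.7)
The plan is to short-circuit both claims by extracting a reproducing-kernel identity for $E_{0,1}$ from the preceding lemma. Its proof already established, for $\mu$-a.e.\ $x,y\in M$,
\[
\sum_{k=1}^{4}\varphi_k(x)\varphi_k(y)=4\cos\theta(x,y)\cos r(x,y),
\]
since this is the integral kernel of the projector $P_{0,1}$ written in two ways. The $\varphi_k$ are continuous, and $\delta$-convergence forces both $r$- and $\theta$-convergence (because $\cos r,\cos\theta\leqslant 1$ and their product tending to $1$ forces each factor to $1$), so the identity extends to all $x,y$ by continuity. Combining it with the defining duality $\varphi_k(x_i)=\delta_{ki}$ of the chosen points $x_1,\dots,x_4$ drives the entire argument.

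For $Q$, one evaluates the identity at $x=x_i,\,y=x_j$, so that the right-hand side collapses to $\delta_{ij}$, giving
\[
a_{ij}=\tfrac14\delta_{ij},\qquad Q(\zeta)=\tfrac14\sum_{i=1}^{4}\zeta_i^2,
\]
which is a positive-definite quadratic form. In particular the symmetric matrix $(a_{ij})$ is invertible, so the associated bilinear form $\beta_1$ is non-degenerate --- this is the intended content of ``$Q$ is injective''.

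For $\Phi$, suppose $\Phi(x)=\Phi(y)$, i.e.\ $\varphi_k(x)=\varphi_k(y)$ for all $k$. Substituting this into the pointwise kernel identity at an arbitrary $z\in M$ yields
\[
\cos\theta(x,z)\cos r(x,z)=\tfrac14\sum_{k=1}^{4}\varphi_k(x)\varphi_k(z)=\tfrac14\sum_{k=1}^{4}\varphi_k(y)\varphi_k(z)=\cos\theta(y,z)\cos r(y,z).
\]
Taking $z=x$ makes the left-hand side equal $1$, so $\cos r(y,x)\cos\theta(y,x)=1$ and hence $\delta(y,x)=\arccos 1=0$; Lemma~\ref{lemma-delta} then gives $x=y$. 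The only delicate point in the whole argument is promoting the $\mu$-a.e.\ kernel identity to a pointwise one, which is handled by the continuity step above; once that is done the two injectivity statements are essentially arithmetic.
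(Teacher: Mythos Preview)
Your argument for $Q$ contains a genuine error. You combine two properties of the $\varphi_k$: the reproducing-kernel identity $\sum_k\varphi_k(x)\varphi_k(y)=4\cos\theta(x,y)\cos r(x,y)$, which was derived for an \emph{$L^2$-orthonormal} basis of $E_{0,1}$, and the duality $\varphi_k(x_i)=\delta_{ki}$, which characterises the basis \emph{dual} to $\{\delta_{x_i}\}$. These two requirements are incompatible, and your own computation exposes this: you obtain $a_{ij}=\tfrac14\delta_{ij}$, yet by definition $a_{ii}=\cos\theta(x_i,x_i)\cos r(x_i,x_i)=1$. So no points $x_i$ can make an $L^2$-orthonormal basis satisfy $\varphi_j(x_i)=\delta_{ij}$. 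The paper's phrasing of the preceding lemma is admittedly sloppy on this point, but the $\varphi_i$ entering $\Phi$ and $\beta_1$ must be read as the dual basis to $\{\delta_{x_i}\}$, for which the orthonormal kernel identity you invoke simply does not hold. Consequently your conclusion that $Q$ is positive definite is unfounded; indeed the paper needs a separate Hausdorff-dimension argument (Lemma~\ref{lemma-dim}) to establish positive definiteness later.

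The paper's proof avoids this trap by working directly with the bilinear form: from \eqref{eq-V-1-metric} one has $\beta_1(\Phi(x),\Phi(y))=\cos r(x,y)\cos\theta(x,y)$, hence $Q(\Phi(x)-\Phi(y))=2-2\cos\delta(x,y)$, and $\Phi(x)=\Phi(y)$ forces $\delta(x,y)=0$. Your argument for the injectivity of $\Phi$ is essentially the same idea and is salvageable --- once $\varphi_k(x)=\varphi_k(y)$ for a basis of $V_1$, one gets $\cos\delta(x,z)=\cos\delta(y,z)$ for all $z$ since $\cos\delta(\cdot,z)\in V_1$ --- but you should justify it via \eqref{eq-V-1-metric} rather than the orthonormal identity.
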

\begin{proof}
By \eqref{eq-Q} and the bi-linearity of $\beta_1$ we know that 
\begin{align*}
Q(\Phi(x)-\Phi(y))&=\beta_1(\Phi(x),\Phi(x))+\beta_1(\Phi(y),\Phi(y))-2\beta_1(\Phi(x),\Phi(y)).
\end{align*}
Moreover, combining \eqref{eq-Q}, \eqref{eq-a} and \eqref{eq-V-1-metric} we know that 
\begin{align}\label{eq-beta1-sphere}
\beta_1(\Phi(x),\Phi(x))=\beta_1(\Phi(y),\Phi(y))=1
\end{align}
and
\[
\beta_1(\Phi(x),\Phi(y))=\cos r(x,y)\cos\theta(x,y).
\]
Therefore we obtain that 
\begin{align}\label{eq-Q-delta}
Q(\Phi(x)-\Phi(y))=2-2\cos r(x,y)\cos\theta(x,y).
\end{align}
If $\Phi(x)=\Phi(y)$, then $Q(\Phi(x)-\Phi(y))=0$ which implies that $r(x,y)=\theta(x,y)=0$. Hence we have that $x=y$. This implies the injectivity for both $\Phi$ and $Q$. 
\end{proof}

From the above lemma we know that $\operatorname{Ker}\beta_1=\left\{ 0 \right\}$.
Therefore we can have the following orthogonal decomposition of $\R^4$
\begin{equation}\label{eq-orth-decomp}
\R^4=E_{+}\oplus E_{-}\oplus \operatorname{Ker}\beta_1=E_{+}\oplus E_{-},
\end{equation}
where $E_{+}$ and $E_{-}$ are subspaces of $\R^4$ on which $\beta_1$ is positive definite and negative definite.

\begin{definition}
Denote by $\mathrm{P}_{E_{+}}$ and $\mathrm{P}_{E_{-}}$ the projection map  from $\R^4$ to the subspaces $E_+$ and $E_-$ respectively. We define
\begin{equation}\label{eq-Phi+-}
\Phi_{+}:=\mathrm{P}_{E_{+}}\circ \Phi,\quad \Phi_{-}:=\mathrm{P}_{E_{-}}\circ \Phi.
\end{equation}
Denote by $Q_{+}$ and $Q_{-}$ the restrictions of the quadratic form $Q$ on $E_{+}$ and $E_{-}$, i.e. for any $u\in \R^4$, 
\[
Q_+(u):=Q(\mathrm{P}_{E_{+}}(u)),\quad Q_-(u):=Q(\mathrm{P}_{E_{-}}(u)).
\]
We define the induced distance on $E_+$ from $Q_+$ by setting  for any $u,v\in E_{+}$ that 
\begin{equation}\label{eq-d_Q+}
d_{Q_{+}}(u,v):=\sqrt{Q_{+}(u-v)}.
\end{equation} 
\end{definition}

\begin{lemma}\label{lemma-dim}
Let $\Phi_+$ be as given in \eqref{eq-Phi+-}. Then $\Phi_{+}$ is a bi-Lipschitz embedding of $(M,\delta)$ into $(E_{+}, d_{Q_{+}})$. Moreover we have $\dim E_+=4$, and consequently $\beta_1$ as given in \eqref{eq-beta1} is positive definite on $\R^4$.
\end{lemma}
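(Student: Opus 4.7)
The plan is to pin down the bilinear form $\beta_1$ by direct computation and show that it equals $\tfrac{1}{4}$ times the standard Euclidean inner product on $\R^4$. This will force $\dim E_+=4$ at once, and the bi-Lipschitz property of $\Phi_+=\Phi$ then reduces to an elementary trigonometric estimate.

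First I would establish the lower bi-Lipschitz bound, which does not require the positive definiteness of $\beta_1$. Using the $\beta_1$-orthogonal decomposition $\R^4=E_+\oplus E_-$ and the identity \eqref{eq-Q-delta}, one has
\begin{align*}
d_{Q_+}(\Phi_+(x),\Phi_+(y))^2
&= Q_+(\Phi_+(x)-\Phi_+(y)) \\
&= Q(\Phi(x)-\Phi(y)) - Q_-(\Phi_-(x)-\Phi_-(y)) \\
&\geqslant 2 - 2\cos r(x,y)\cos\theta(x,y),
\end{align*}
since $Q_-\leqslant 0$ on $E_-$. Recalling $\cos\delta(x,y) = \cos r(x,y)\cos\theta(x,y)$ and invoking the elementary inequality $1-\cos t \geqslant 2t^2/\pi^2$ on $[0,\pi]$, this yields
\[
d_{Q_+}(\Phi_+(x),\Phi_+(y)) \geqslant \frac{2}{\pi}\delta(x,y),
\]
which also gives the injectivity of $\Phi_+$.

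Next I would compute $a_{ij}$ explicitly. By the dual basis property $\varphi_i(x_j)=\delta_{ij}$, each vector $\Phi(x_j)$ is the $j$-th standard basis vector of $\R^4$. Select the $x_j$ inside $\operatorname{supp}\mu$: this is possible because the continuous representatives $\varphi_i$ are linearly independent in $L^2(M,\mu)$ and therefore cannot simultaneously vanish on $\operatorname{supp}\mu$, so $\{\Phi(x):x\in\operatorname{supp}\mu\}$ already spans $\R^4$. The continuity of the $\varphi_k$ together with the definition of $\operatorname{supp}\mu$ promotes the $\mu$-a.e.\ reproducing identity $\sum_k\varphi_k(x)\varphi_k(y) = 4\cos r(x,y)\cos\theta(x,y)$ established in the preceding lemma to a pointwise identity on $\operatorname{supp}\mu\times\operatorname{supp}\mu$. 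Evaluating at $(x_i,x_j)$ gives
\[
\delta_{ij} = \sum_k\varphi_k(x_i)\varphi_k(x_j) = 4\cos r(x_i,x_j)\cos\theta(x_i,x_j) = 4a_{ij},
\]
so $a_{ij}=\tfrac{1}{4}\delta_{ij}$ and $\beta_1=\tfrac{1}{4}\langle\cdot,\cdot\rangle_{\R^4}$ is positive definite. Consequently $E_-=\{0\}$, $E_+=\R^4$, $\dim E_+=4$, and $\Phi_+=\Phi$.

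Finally, the upper bi-Lipschitz bound follows immediately from $Q_+=\tfrac{1}{4}\|\cdot\|^2$ and \eqref{eq-Q-delta}:
\[
d_{Q_+}(\Phi(x),\Phi(y))^2 = 2 - 2\cos\delta(x,y) \leqslant \delta(x,y)^2,
\]
using $1-\cos t \leqslant t^2/2$, so $d_{Q_+}(\Phi(x),\Phi(y))\leqslant \delta(x,y)$. The main delicate step is promoting the $\mu$-a.e.\ reproducing identity to pointwise validity at the specific dual basis points $x_j$; this hinges on choosing the $x_j$ inside $\operatorname{supp}\mu$ and fully exploiting the continuity of the eigenfunctions, after which every other step is a routine computation.
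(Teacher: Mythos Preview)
Your shortcut to compute $a_{ij}=\tfrac14\delta_{ij}$ directly is internally inconsistent. By the definition \eqref{eq-a}, $a_{ii}=\cos r(x_i,x_i)\cos\theta(x_i,x_i)=1$ for every $i$, so the conclusion $a_{ij}=\tfrac14\delta_{ij}$ is false on the diagonal. The source of the error is precisely the step you flag as delicate: the reproducing identity $\sum_k\varphi_k(x)\varphi_k(y)=4\cos r(x,y)\cos\theta(x,y)$ uses the \emph{orthonormal} basis $\{\varphi_k\}$, and evaluating at $x=y$ gives $\lvert\Phi(x)\rvert_{\R^4}^2=4$ on $\operatorname{supp}\mu$. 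But the dual basis condition forces $\Phi(x_j)=e_j$ with $\lvert e_j\rvert_{\R^4}^2=1$. These two constraints are incompatible, so the dual points $x_j$ can never lie in $\operatorname{supp}\mu$, and you cannot evaluate the reproducing identity at $(x_i,x_j)$. In other words, you cannot simultaneously have the $\varphi_k$ orthonormal (so the reproducing identity holds) and have the $x_j$ be dual points inside $\operatorname{supp}\mu$.

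The paper circumvents this entirely by a different mechanism: it first establishes the two-sided bound $4\sin^2\tfrac{\delta}{2}\le Q_+(\Phi_+(x)-\Phi_+(y))\le 4(1+\lambda)\sin^2\tfrac{\delta}{2}$ using only $Q=Q_++Q_-$ and an eigenvalue bound on $\beta_1$, and then invokes the heat-kernel based Hausdorff dimension estimate (Proposition~\ref{cor-dim-3}) to force $\dim E_+>3$. Your lower bi-Lipschitz bound is fine and matches the paper's; what is missing is an independent argument for $\dim E_+=4$, and the dimension estimate is exactly the tool the paper supplies for that purpose.
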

\begin{proof}
From \eqref{eq-orth-decomp} we have for any $x,y\in M$ that 
\[
Q(\Phi(x)-\Phi(y))=Q_{+}(\Phi_{+}(x)-\Phi_{+}(y))+Q_{-}(\Phi_{-}(x)-\Phi_{-}(y)).
\]
Moreover from \eqref{eq-Q-delta} and \eqref{e.delta} we know that
\[
Q(\Phi(x)-\Phi(y))=2-2\cos \delta(x,y)=4\sin^2\frac{\delta(x,y)}{2}.
\]
Let $\lambda$ be the largest absolute value of eigenvalues of $\beta_1$, then
\[
0\leqslant -Q_{-}(\Phi_{-}(x)-\Phi_{-}(y))\leqslant \lambda \cdot 4\sin^2\frac{\delta(x,y)}{2}.
\]
Hence
\[
4\sin^2\frac{\delta(x,y)}{2}\leqslant Q_{+}(\Phi_{+}(x)-\Phi_{+}(y))\leqslant 4(1+\lambda)\sin^2\frac{\delta(x,y)}{2}.
\]
This then implies that $\Phi_{+}$ is a bi-Lipschitz embedding of $(M,\delta)$ into  $(E_{+}, d_{Q_{+}})$.  Moreover, note \eqref{eq-beta1-sphere} implies that $\Phi(M)$ is a part of a hypersurface in $\R^4$, as well as $\Phi_+(M)$. Hence the dimension of $E_{+}$ is greater than the local Hausdorff dimension of $(M,\delta)$. By Proposition \ref{cor-dim-3} we obtain that
\[
\dim E_{+}> 3.
\]
Thus $3<\dim E_+\leqslant 4$. This implies $\dim E_+=4$. Combined with \eqref{eq-orth-decomp} we know that $E_+=\R^4$ and $\beta_1$ is positive definite on it.
\end{proof}

We can now define the sphere in $(\R^4,\beta_1)$ and a length distance on it. Let 
\begin{equation}\label{eq-d_Q+delta}
\cS^3:=\{u\in \R^4, Q_+(\zeta)=1\}, \quad d_{\cS^3}(u,v)=2\arcsin\left( \frac{d_ {Q_{+}}(u,v)}{2}\right),\quad u,v\in \cS^3.
\end{equation}
We are now ready to show isometry between $(M,\delta)$ and $(\mathcal{S}^3,d_{\cS^3})$ using an argument similar to \cite{CarronTewodrose2022}.
\begin{proposition}
The map $\Phi$ as given in \eqref{eq-Phi} is an isometry between $(M,\delta)$ and $(\mathcal{S}^3,d_{\cS^3})$.
\end{proposition}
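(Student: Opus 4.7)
The plan is to split the statement into the isometric embedding and the surjectivity assertions. For the embedding, Lemma~\ref{lemma-dim} has already established $E_+=\R^4$ with $\beta_1$ positive definite, so $\Phi_+ = \Phi$ and $Q_+=Q$. Identity~\eqref{eq-beta1-sphere} places $\Phi(M)\subseteq\cS^3$, and combining~\eqref{eq-Q-delta} with the definition~\eqref{e.delta} of $\delta$ gives
\[
d_{Q_+}\!\bigl(\Phi(x),\Phi(y)\bigr)^2 \;=\; Q\bigl(\Phi(x)-\Phi(y)\bigr) \;=\; 2-2\cos\delta(x,y) \;=\; 4\sin^2\!\tfrac{\delta(x,y)}{2}.
\]
Since $\delta(x,y)\in[0,\pi)$, the definition~\eqref{eq-d_Q+delta} of $d_{\cS^3}$ then yields $d_{\cS^3}(\Phi(x),\Phi(y)) = 2\arcsin(\sin(\delta(x,y)/2)) = \delta(x,y)$, so $\Phi$ is distance preserving and in particular injective.

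For surjectivity, I would first observe that $(M,\delta)$ is compact: it is complete, locally compact by Lemma~\ref{lemma-local-cpmt}, bounded (since $\delta<\pi$), and a geodesic space by Proposition~\ref{prop-lengthy}, so Hopf--Rinow applies. Consequently $\Phi(M)$ is compact, hence closed in $\cS^3$. Because $\Phi$ is distance preserving between geodesic spaces, every minimizing geodesic in $\cS^3$ joining two non-antipodal points of $\Phi(M)$ lies in $\Phi(M)$, so $\Phi(M)$ is geodesically convex in $\cS^3$. Moreover Lemma~\ref{lemma-dim} combined with Proposition~\ref{cor-dim-3} pins the Hausdorff dimension of $\Phi(M)$ to exactly $3$. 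I would finish by arguing that a closed geodesically convex subset of the round $\cS^3$ of full Hausdorff dimension has nonempty interior, so, being simultaneously closed and locally open inside the connected space $\cS^3$, it must equal all of $\cS^3$.

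The principal obstacle is this last step: upgrading closed-plus-geodesically-convex-plus-Hausdorff-dimension-$3$ to surjectivity. Small closed geodesic balls of radius strictly less than $\pi/2$ and closed hemispheres are closed geodesically convex proper subsets of $\cS^3$ of full Hausdorff dimension $3$, so convexity and dimension alone are not enough and additional input is genuinely needed. Following the Riemannian prototype of \cite{CarronTewodrose2022}, I would exploit the distance symmetry of the heat kernel $q_t$ established in Proposition~\ref{lemma-q-t}: the pushforward $\Phi_*\mu$ and the normalized Riemannian volume on $\cS^3$ are two probability measures whose convolutions with $q(t,\cdot)$ agree under $\Phi$, which should force them to coincide and in particular give $\Phi_*\mu(\cS^3\setminus\Phi(M)) = 0$. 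Combined with closedness of $\Phi(M)$ and strict positivity of the Riemannian volume on nonempty open sets of $\cS^3$, this yields $\Phi(M)=\cS^3$ and completes the isometry.
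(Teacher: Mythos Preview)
Your isometric-embedding paragraph is correct and matches the paper's argument essentially verbatim.

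The surjectivity sketch, however, has a real gap. You correctly diagnose that closed $+$ geodesically convex $+$ Hausdorff dimension $3$ is insufficient (closed balls and hemispheres are counterexamples), but the proposed repair does not close the hole. The statement ``convolutions with $q(t,\cdot)$ agree under $\Phi$'' unpacks only to
\[
\int_{\cS^3} q\bigl(t, d_{\cS^3}(u,v)\bigr)\, d(\Phi_*\mu)(v) \;=\; 1 \;=\; \int_{\cS^3} q\bigl(t, d_{\cS^3}(u,v)\bigr)\, d\nu(v)
\]
for $u\in\Phi(M)$, which is just stochastic completeness of the two heat kernels. To conclude $\Phi_*\mu=\nu$ you would need this identity for \emph{every} $u\in\cS^3$, and extending from $\Phi(M)$ to all of $\cS^3$ is precisely the surjectivity you are trying to prove. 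Your appeal to \cite{CarronTewodrose2022} is also misdirected: their surjectivity argument---and the one the paper adopts---is not a measure-coincidence argument at all.

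What the paper actually does is exploit a spectral fact you have available but do not use: each coordinate $\varphi_i$ of $\Phi$ lies in the eigenspace $E_{0,1}$, so any linear functional $H(x)=\beta_1(\xi_0,\Phi(x))$ satisfies $\int_M H\,d\mu=0$ by orthogonality to constants. If $\Phi(M)$ were contained in the closed hemisphere $\{H\le 0\}$, this forces $H\equiv 0$ $\mu$-a.e., hence $\Phi(M)$ sits inside a great $2$-sphere, contradicting $\dim\Phi(M)\ge 3$. Thus $\Phi(M)$ is contained in no closed hemisphere. Combined with your (correct) observation that $\Phi(M)$ is closed and geodesically convex, one finishes geometrically: if some $\xi_0\notin\Phi(M)$, take the nearest point $\xi_1\in\Phi(M)$; every minimizing geodesic from $\xi_1$ of length $<\pi$ that enters $B(\xi_0,r_0)$ must avoid $\Phi(M)$, and the union of these geodesics covers an open hemisphere disjoint from $\Phi(M)$, contradicting the no-hemisphere conclusion. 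The missing ingredient in your proof is exactly this mean-zero property of the eigenfunctions $\varphi_i$.
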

\begin{proof}
First note $\Phi(x)=1$ for all $x\in M$. Combining \eqref{eq-d_Q+delta},\eqref{eq-d_Q+} and \eqref{eq-Q-delta}  we  have
\begin{equation}\label{eq-def-d-beta-1}
d_{\cS^3}(\Phi(x),\Phi(y))=\arccos (\cos r(x,y)\cos\theta(x,y))=\delta(x,y),
\end{equation}
for all $x,y\in M$. Hence $\Phi$ is an isometric embedding of $(M,\delta)$ into $(\cS^3,d_{\cS^3})$. We are left to show that $\Phi$ is onto. We shall use an argument as in \cite{CarronTewodrose2022}. If $\cS^3\setminus \Phi(M)=\varnothing$, then proof is done. Otherwise, pick a $\xi_0\in \cS^3\setminus \Phi(M)$ and let  $r_0=d_{\cS^3}(\xi_0, \Phi(M))$.

We first show that $\Phi(M)$ is contained in no hemisphere, namely $r_0<\frac{\pi}2$. If not, we have that $\Phi(M)$ is outside of the hemisphere centered at $\xi_0$.  This implies that the function $H(x):=\beta_1(\xi_0, \Phi(x))\le0$ for all $x\in M$. By the definition of $\Phi$ we know that $H$ is a linear combination of $\varphi_1(x),\dots,\varphi_{4}(x)$. Hence $H\in V_1=E_{0,1}$. Recall that $E_{0,1}$ as given in \eqref{eq-E01} is orthogonal to constant functions in $L^2(\mu)$, hence we obtain that 
\[
\int_M H(x)d\mu(x)=0
\] 
This implies that $H(x)=0$ for $\mu$-a.s.$x$ in $M$, namely $\Phi(M)\subset \{\xi\in \cS^3, d_{\cS^3}(\xi_0, \xi)=\frac{\pi}{2}\}$. It contradicts with the fact that $\dim \Phi(M)\geqslant 3$.

Next,  Proposition \ref{prop-lengthy} indicates that $(M, \delta, \mu)$ is a geodesic space, hence  the image $\Phi(M)$ is a closed and totally geodesic subset of $\cS^3$. Therefore we can find a $\xi_1\in \Phi(M)$ such that $\delta(\xi_0, \xi_1)=r_0$. On the other hand, since $\Phi$ maps geodesics to geodesics hence all minimizing paths in $\cS^3$ that joining two points in $\Phi(M)$ are contained in $\Phi(M)$. This implies that any minimizing geodesics in $\cS^3$ started from $\xi_1$ that are of length less than $\pi$ and pass through the open ball $B_{d_{\cS^3}}(\xi_0,r)$ can not touch $\Phi(M)$. However, note that the union of such minimizing geodesics form an open hemisphere which is contained in $\cS^3\setminus \Phi(M)$. This contradicts with the fact that $\Phi(M)$ is contained in no hemisphere. We then obtain the conclusion. 
\end{proof}
 
\section{The second isometry}\label{sec-isom-2}
In this section we study the second isometry for the projected space. First we introduce the quotient space and address the metric, measure and Dirichlet form on it. Then we will build the second isometry using the spectral information of its heat kernel in the later part of this section.

\subsection{The quotient space $(\B,r,\tmu)$}
Let $(M,r,\theta,\mu)$ be as given in Assumption \ref{Assump1} and let $(M,\delta,\mu)$ be as given in Corollary \ref{prop-qt-LDP}.   We can establish a fibration structure on $M$ based on the semi-metric $r:M\times M\to \R_{\geqslant 0}$. For any $x\in M$, we define the orbit (leave) that contains $x$ by
\[
\x:=\{ y\in M, r(x,y)=0\}.
\]
\begin{lemma}\label{lemma-orbit-disj}
 For any orbits $\x, \y\subset (M,\delta)$ that are not identical, they are disjoint. Moreover, each orbit is a closed set in $(M,\delta)$.
\end{lemma}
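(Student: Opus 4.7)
The statement splits into two independent claims, each of which reduces almost immediately to the triangle inequality for the pseudo-metric $r$ together with the structure of $\delta$ defined in \eqref{e.delta}. I will handle them in that order.

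\textbf{Disjointness.} The plan is to show that if $\x \cap \y \neq \varnothing$, then $\x = \y$. Pick $z \in \x \cap \y$, so $r(x,z) = r(y,z) = 0$, and apply the triangle inequality for $r$ to conclude $r(x,y) \leq r(x,z) + r(z,y) = 0$. Then for any $w \in \x$ the same inequality yields $r(y,w) \leq r(y,x) + r(x,w) = 0$, giving $w \in \y$; the symmetric argument gives the reverse inclusion, so $\x = \y$. This only uses symmetry and the triangle inequality of the pseudo-metric.

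\textbf{Closedness in $(M,\delta)$.} The plan is to take a sequence $\{y_n\} \subset \x$ with $\delta(y_n, y) \to 0$ and deduce $r(x,y) = 0$. From the definition
$$\delta(y_n, y) = \arccos\bigl(\cos r(y_n, y) \cos \theta(y_n, y)\bigr),$$
continuity of $\arccos$ gives $\cos r(y_n, y) \cos \theta(y_n, y) \to 1$. Since $r(y_n, y) \in [0, \pi/2)$ and $\theta(y_n, y) \in [0, \pi)$ by Assumption~\ref{Assump1}, both factors are at most $1$, so their product can tend to $1$ only if each factor does. In particular $\cos r(y_n, y) \to 1$, hence $r(y_n, y) \to 0$. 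The triangle inequality for $r$ then yields
$$r(x,y) \leq r(x, y_n) + r(y_n, y) = r(y_n, y) \longrightarrow 0,$$
so $r(x,y) = 0$ and $y \in \x$.

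\textbf{Main obstacle.} There is no serious obstacle: both parts are direct consequences of the pseudo-metric axioms and the explicit form of $\delta$. The only slightly delicate step is justifying that $\cos r(y_n,y) \cdot \cos \theta(y_n,y) \to 1$ forces each factor individually to tend to $1$, which is immediate once one observes that $\cos \theta$ could in principle be negative but each factor is bounded above by $1$, so convergence of the product to the supremum forces both factors to the supremum.
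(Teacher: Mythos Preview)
Your proof is correct. The disjointness argument is essentially identical to the paper's.

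For closedness, you take a different route than the paper. The paper shows that each orbit $\x$ is \emph{complete}: it takes a $\delta$-Cauchy sequence $\{x_n\}$ in $\x$, observes that $r(x_n,x_m)=0$ forces $\theta(x_n,x_m)\to 0$, and then invokes the completeness of $M$ with respect to $r$ and $\theta$ (from Assumption~\ref{Assump1}) to produce a limit point lying in $\x$. You instead prove closedness directly: given a sequence in $\x$ that already $\delta$-converges to some $y\in M$, you extract $r(y_n,y)\to 0$ from the form of $\delta$ and then use the triangle inequality for $r$ to conclude $y\in\x$. Your argument is slightly more economical in that it does not need the completeness hypothesis on $r$ and $\theta$; the paper's version, on the other hand, simultaneously establishes completeness of the orbits, which is a marginally stronger conclusion (though not one used later).
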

\begin{proof}
If there exists a $z\in \x\cap \y$, then we  have for any $x\in\x$ and $y\in\y$ that
\[
r(x,z)=r(y,z)=0.
\]
By the triangle inequality  we obtain that $r(x,y)=0$. This implies that $\x\subset \y$ and $\y\subset \x$, hence the first conclusion. Next, to show that each $\x$ is closed, consider a Cauchy sequence $\{x_n\}_{n\ge1}$ in $\x$, namely as $m,n\to\infty$
\[
\delta(x_n,x_m)\to 0.
\]
Combining \eqref{e.delta} with the fact that $r(x_m,x_n)=0$ we know that 
\[
\theta(x_n,x_m)\to 0.
\]
From Assumption \ref{Assump1} we know that $M$ is complete with respect to $r$ and $\theta$. Hence there exists a $x\in M$ such that $r(x_n,x)\to0$ and $\theta(x_n,x)\to0$ as $n\to\infty$. This is equivalent to say that $x\in \x$ and $\delta(x_n,x)\to0$. The second assertion is proved.  \end{proof}
The above lemma guarantees the equivalent relation on $M$ that is induced by $r$.
\begin{definition}
For any $x,y\in M$, we say $x\sim y$ if $r(x,y)=0$. We define the quotient (projective)  space by
\[
\B:=M/\sim,
\]  
which is the metric reflection of the pseudo-metric space $(M,r)$.
\end{definition}
 Note that for any $x\in \x$ and $y_1, y_2\in \y$, by triangular inequality we have that
\[
r(x,y_1)+r(y_1,y_2)\geqslant  r(x,y_2)
\]
which implies that $r(x,y_1)\geqslant  r(x,y_2)$. Interchanging $y_1$ and $y_2$ we then realize that $r(x,y_1)= r(x,y_2)$. This then allows us to extend the definition of $r$ to $\B$. Let ${r}:\B\times \B\to \R_{\geqslant 0}$ be given as
\begin{equation}\label{eq-tilde-r}
{r}(\x,\y):=r(x,y).
\end{equation}
for any $\x,\y\in \B$ and any $x\in\x$, $y\in \y$.

\begin{lemma}
The space $(\B, {r})$ is a metric space.
\end{lemma}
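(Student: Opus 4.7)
The plan is to verify, in order, well-definedness and then the four metric axioms for $r$ on $\B \times \B$. All of these reduce cleanly to properties of $r$ as a pseudo-metric on $M$ together with the way the equivalence relation $\sim$ was defined, so no genuinely new content is required.

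First I would formally record that $r(\x,\y)$ does not depend on the choice of representatives. The paragraph immediately preceding the lemma already establishes this in one variable: for any $x \in \x$ and any $y_1, y_2 \in \y$, two applications of the triangle inequality for the pseudo-metric $r$ give $r(x,y_1) = r(x,y_2)$. Swapping the roles of $\x$ and $\y$ and applying symmetry of $r$ yields independence in the other variable as well, so $r: \B \times \B \to \R_{\geqslant 0}$ is unambiguously defined by \eqref{eq-tilde-r}.

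Next I would check the axioms. Non-negativity, symmetry, and the triangle inequality for $r$ on $\B$ all follow immediately by picking representatives: for any $\x,\y,\z \in \B$ and any $x \in \x$, $y \in \y$, $z \in \z$, the pseudo-metric inequalities $r(x,y) \geqslant 0$, $r(x,y) = r(y,x)$, and $r(x,z) \leqslant r(x,y) + r(y,z)$ descend verbatim via well-definedness. The only remaining axiom — the one for which the quotient was introduced — is the separation property: if $r(\x,\y) = 0$, then for any representatives $x \in \x$, $y \in \y$ we have $r(x,y) = 0$, which by the definition of $\sim$ means $x \sim y$, so $\x = \y$.

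I do not anticipate any obstacle here; the lemma is essentially a bookkeeping check that the standard ``metric reflection'' construction applies to our pseudo-metric $r$. The substantive preparation (disjointness of orbits, closedness, and the constancy of $r$ on orbits) has already been done in Lemma~\ref{lemma-orbit-disj} and the discussion preceding the present statement, so the proof itself amounts to little more than invoking those facts.
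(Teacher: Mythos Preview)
Your proposal is correct and follows essentially the same approach as the paper: verify the metric axioms on representatives using the pseudo-metric properties of $r$, with the separation axiom coming from the definition of $\sim$ (the paper phrases this last step via Lemma~\ref{lemma-orbit-disj}, but the content is identical). The only cosmetic difference is that you spell out well-definedness in both variables, whereas the paper handles this in the paragraph preceding the lemma.
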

\begin{proof}
Clearly if any $\x,\y\in \B$ satisfy ${r}(\x,\y)=0$, then $r(x,y)=0$ for some $x\in\x$ and $y\in\y$. This implies that  $x\in\y$, hence $\x=\y$ by Lemma \ref{lemma-orbit-disj}. Symmetry is obvious. To check the triangle inequality, for any $\x,\y,\z \in \B$, 
\[
{r}(\x,\y)+{r}(\y,\z)= r(x,y)+r(y,z)\geqslant  r(x,z)={r}(\x,\z),
\]
we then obtain the conclusion.
\end{proof}
We denote by $\pi$ the projection map $\pi: M\to \B$ such that it sends any $x\in M$ to the unique orbit $\x\in\B$ that contains $x$:
\begin{align}\label{eq-pi}
\pi(x)=\x.
\end{align}
Let $\tilde{\mu}$ be the measure on $\B$ that is obtained by the pushforward of $\pi$: 
\[
\tilde{\mu}:=\mu\circ \pi^{-1}.
\]
Clearly $\tilde{\mu}(\B)=\mu(M)=1$.
Let us recall the following disintegration theorem in \cite[p.~13]{Kazukawa2022a} will be useful later.

\begin{lemma}[Theorem 3.4, \cite{Kazukawa2022a}]\label{lemma-disint}
Let $(M,\delta, \mu)$ be a metric measure space as given previously. Let $\pi: (M, \delta)\to (\B, r)$ be the projection map as given in \eqref{eq-pi}. Then there exists a family $\{\mu_\x\}_{\x\in\B}$ of probability measures on $M$ such that
\begin{itemize}
\item[(1)]For any Borel $A\subset \B$, the map $\x\mapsto \tilde{\mu}_\x(A)$ is a Borel measurable function from $\B$ to $[0,1]$.
\item[(2)] For $\tilde{\mu}$ a.s. $\x\in \B$, $\mu_\x(M\setminus \pi^{-1}(\x))=0$.
\item[(3)] For any Borel measurable function $f$ on $M$
\[
\int_M f(x)d\mu(x)=\int_\B \int_{\pi^{-1}(\x)}f(x)d\mu_\x(x)d\tilde{\mu}(\x).
\]
\end{itemize}
\end{lemma}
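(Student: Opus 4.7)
\medskip\noindent\textbf{Proof proposal.}
Since this lemma is attributed to \cite[Theorem 3.4]{Kazukawa2022a}, my plan is to verify that the hypotheses of the cited disintegration theorem are satisfied in our concrete setting and then invoke it directly. The standard hypotheses for a disintegration along a Borel map $\pi:M\to\B$ are: (a) $(M,\delta)$ is a Polish space; (b) $\mu$ is a Borel probability (equivalently, Radon) measure on $M$; (c) the map $\pi$ is Borel measurable; and (d) $\tmu=\pi_{*}\mu$ is the pushforward.

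First I would verify the Polish condition. Completeness of $(M,\delta)$ was established in the proposition immediately following Lemma~\ref{lemma-delta}. For separability, I would combine Lemma~\ref{lemma-local-cpmt} (local compactness) with Proposition~\ref{prop-lengthy} (geodesic property): a complete, locally compact length space is proper by the Hopf--Rinow--Cohn-Vossen theorem, hence $\sigma$-compact and therefore separable. This is the step I expect to require the most care, since our starting abstract setting does not guarantee separability a priori; as a safety net, one may restrict to $\operatorname{supp}(\mu)$, since any finite Borel measure on a metric space has separable support and the disintegration only needs to hold $\tmu$-almost everywhere.

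Next I would verify that $\pi$ is Borel measurable. Since $\cos\theta\leq 1$ and $\cos r\geq 0$, the identity $\delta(x,y)=\arccos(\cos r(x,y)\cos\theta(x,y))$ yields $r(x,y)\leq\delta(x,y)$. Consequently $\pi:(M,\delta)\to(\B,r)$ is $1$-Lipschitz, in particular continuous, and hence Borel measurable. Hypothesis (b) is the content of Proposition~\ref{p.HeatKernel}, while (d) is the very definition of $\tmu$ given just before the statement.

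With hypotheses (a)--(d) in place, the existence of a family $\{\mu_\x\}_{\x\in\B}$ satisfying properties (1)--(3) follows directly from \cite[Theorem 3.4]{Kazukawa2022a}. The only genuine obstacle is the separability verification; all remaining points reduce to unpacking definitions or invoking results already established in Sections~\ref{sec-setting} and~\ref{sec-isom-1}.
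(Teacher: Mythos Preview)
The paper does not give its own proof of this lemma; it is stated purely as a quotation of \cite[Theorem~3.4]{Kazukawa2022a} and then used. Your proposal---checking that $(M,\delta)$ is Polish, that $\mu$ is a Borel probability measure, that $\pi$ is continuous (via the inequality $r\leq\delta$), and that $\tmu=\pi_*\mu$ by definition---is therefore more than the paper itself does, and it is the right way to justify invoking the cited result. Your reasoning for each hypothesis is sound; in particular the $1$-Lipschitz bound on $\pi$ follows immediately from $\cos\delta=\cos r\cos\theta\leq\cos r$, and your route to separability through Hopf--Rinow--Cohn-Vossen (or, alternatively, restriction to $\operatorname{supp}\mu$) is appropriate given that separability is not assumed outright.
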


\subsection{Heat kernel and Dirichlet form on $(\B, {r})$} In this subsection we describe a heat kernel on the quotient space  $(\B, {r})$ given the heat kernel $q_t$ on $(M,\delta)$. From it we then obtain the associated heat semigroup and the Dirichlet form, as well as the corresponding self-adjoint operator.

\begin{definition}\label{def-tqt}
Let $q_t(x,y)$ be the heat kernel on $(M,\delta,\mu)$ as in Proposition \ref{lemma-q-t}. We define the following function $\B^2\times [0,+\infty)\to [0,1]$ by
\begin{align}\label{eq-tildePt-1}
\tq_t(\x,\y):=\int_\x \int_\y q_t(x,y)d\mu_\y (y) d\mu_\x(x),
\end{align}
where $\mu_\x$ and $\mu_\y$ are as defined in Lemma \ref{lemma-disint}.
\end{definition}
In the lemma below we prove that $\tq_t$ is in fact a heat kernel on $(\B, r, \tilde{\mu})$.
\begin{proposition}\label{lemma-tq-t}
Let $\tq_t(\x,\y)$, $t\ge0$, $\x,\y\in \B$ be as given in \eqref{eq-tildePt-1}. Then for $\tmu-$a.s. $\x, \y\in \B$ and any $s,t\ge0$,  it satisfies the conditions (i)-(v) in Definition \ref{d.HeatKernel}. Consequently  $\tq_t(\x,\y)$ is a heat kernel on $(\B, r,\tmu)$. 
\end{proposition}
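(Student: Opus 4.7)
The plan is to verify each of the five conditions of Definition \ref{d.HeatKernel} for $\tq_t$. Properties (i) positivity and (ii) symmetry are immediate from the corresponding properties of $q_t$ together with Fubini's theorem. For (iii) stochastic completeness, I apply the disintegration identity of Lemma \ref{lemma-disint} to reduce to the already-known conservativity of $q_t$:
\[
\int_\B \tq_t(\x,\y)\,d\tmu(\y) = \int_\x d\mu_\x(x) \int_M q_t(x,y)\,d\mu(y) = \int_\x d\mu_\x(x) = 1.
\]
For the approximation of identity (v), the cleanest route is to recognize that $\tq_t$ is the integral kernel of the bounded self-adjoint operator $\tilde{P}_t := \pi_* P_t \pi^*$ on $L^2(\B,\tmu)$, where $\pi^* f := f \circ \pi$ and its $L^2$-adjoint $(\pi_* g)(\x) := \int_\x g\,d\mu_\x(x)$ satisfy $\pi_*\pi^* = \operatorname{Id}_{L^2(\B,\tmu)}$. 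Strong continuity of $P_t$ on $L^2(M,\mu)$, combined with boundedness of $\pi^*$ and $\pi_*$, then yields $\tilde{P}_t f \to f$ in $L^2(\B,\tmu)$ as $t \to 0^+$.

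The substantive work lies in the Chapman--Kolmogorov semigroup identity (iv). My plan is to exploit the spectral expansion of $q_t$ from Proposition \ref{lemma-q-t} to establish the reduced form
\[
\tq_t(\x,\y) = \sum_{k=0}^\infty e^{\lambda'_{k,0} t}\, p_{k,0}(\x,\y),
\]
keeping only the $n=0$ spectral components. The eigenfunctions $p_{k,0}(x,y) = 2(2k+1)P_k^{0,0}(\cos 2r(x,y))$ depend only on $r$ and hence descend unambiguously to $\B \times \B$. Once this reduced form is in hand, Chapman--Kolmogorov follows immediately: the $M$-orthogonality $\int_M p_{k,0}(x,z) p_{\ell,0}(z,y)\,d\mu(z) = \delta_{k\ell}\,p_{k,0}(x,y)$ descends via disintegration and fiber-constancy to $\int_\B p_{k,0}(\x,\z) p_{\ell,0}(\z,\y)\,d\tmu(\z) = \delta_{k\ell}\,p_{k,0}(\x,\y)$, and so
\[
\int_\B \tq_s(\x,\z) \tq_t(\z,\y)\,d\tmu(\z) = \sum_{k} e^{\lambda'_{k,0}(s+t)} p_{k,0}(\x,\y) = \tq_{s+t}(\x,\y).
\]

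The delicate step is establishing this reduction: writing $\tq_t = \sum_{k,n} e^{\lambda'_{k,n}t}\tilde p_{k,n}$ with $\tilde p_{k,n}(\x,\y) := \int_\x\int_\y p_{k,n}(x,y)\,d\mu_\y d\mu_\x$, I must show $\tilde p_{k,n} \equiv 0$ whenever $n \neq 0$. Setting $m=0$ in the orthogonality relation \eqref{eq-p-nk-orthog} (whose right-hand side vanishes when $n \neq 0$) and disintegrating in $z$ yields
\[
\int_\B p_{\ell,0}(\z,\y)\,A_{k,n}(x,\z)\,d\tmu(\z) = 0 \quad \text{for all } \ell, y \text{ and } n \neq 0, \qquad A_{k,n}(x,\z) := \int_\z p_{k,n}(x,z)\,d\mu_\z(z).
\]
Establishing the totality of $\{p_{\ell,0}(\cdot,\y)\}_{\ell \geq 0,\,\y\in \B}$ in $L^2(\B,\tmu)$ would then force $A_{k,n}(x,\cdot) \equiv 0$ for $n \neq 0$, and hence $\tilde p_{k,n} \equiv 0$ after a further averaging in $x$. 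Proving this totality is the main obstacle: morally it encodes the fact that $\mu_\x$ is the uniform measure along the (to-be-identified) circle fiber and that $\theta$ is an angular coordinate, but it must be extracted purely from the spectral data of $q_t$ and the structure of the eigenfunctions $p_{k,n}$.
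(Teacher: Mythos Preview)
Your overall strategy coincides with the paper's. For (i)--(iii) and (v) your arguments are at least as detailed as the paper's (which simply declares them ``easy consequences of the definition of $\tq_t$ and Proposition~\ref{lemma-q-t}''), and for (iv) you correctly reduce Chapman--Kolmogorov to the vanishing of the fiber-averaged kernels $\tilde{p}_{k,n}$ for $n\neq0$, which in turn hinges on the totality in $L^2(\B,\tmu)$ of the $n=0$ family.

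The gap you flag is genuine, and the paper closes it in the lemma immediately following the proposition (Lemma~\ref{lemma-Theta-delta}). Writing $p_{k,n}(x,y)=2F_{k,n}(\x,\y)\cos(n\theta(x,y))$ with $F_{k,n}$ depending only on $r$, and setting $\Theta_n(x,\z):=2\int_\z \cos(n\theta(x,z))\,d\mu_\z(z)$, your quantity factors as $A_{k,n}(x,\z)=F_{k,n}(\x,\z)\Theta_n(x,\z)$. The paper specializes the orthogonality relation to $x=y$, $n=0$, $\ell=0$, obtaining
\[
\int_\B F_{k,0}(\x,\z)\,\bigl[F_{0,m}(\x,\z)\Theta_m(x,\z)\bigr]\,d\tmu(\z)=0\qquad\text{for all }k\ge0,\ m\ge1.
\]
The decisive input is then the classical completeness of Legendre polynomials: since $F_{k,0}(\x,\z)=(2k+1)P_k^{0,0}(\cos 2r(\x,\z))$ and $\sum_{k\ge0}\tfrac{2k+1}{2}P_k^{0,0}(a)P_k^{0,0}(b)=\delta(a-b)$, the family $\{F_{k,0}(\x,\cdot)\}_{k\ge0}$ is total in $L^2(\B,\tmu)$ for each fixed $\x$, forcing $(\cos r)^m\Theta_m\equiv0$ and hence $\Theta_m\equiv0$ for every $m\ge1$. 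Note this is purely an argument about functions of the $r$-variable; contrary to your intuition, no a~priori knowledge of the fiber measure $\mu_\x$ or of the angular nature of $\theta$ is used. Rather, $\Theta_n(x,\y)=\delta_0(n)$ emerges as a \emph{consequence}, and it is precisely the statement that $\mu_\y$ integrates $\cos(n\theta(x,\cdot))$ to zero along each fiber.
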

\begin{proof}
Firstly (i) (ii) (iii) and (v) are easy consequences of the definition of $\tq_t$ and Proposition \ref{lemma-q-t}. We now prove (iv). We recall here the expression \eqref{eq-qt-spect} for $q_t$:
\begin{align}
q_t(x,y)=\sum_{n=0}^{\infty} \sum_{k=0}^\infty p_{k,n} (x,y)e^{-\tlam_{k,n} t}
\end{align}
where $p_{n,k}(x,y)$ are as given in \eqref{eq-p-nk} satisfying \eqref{eq-p-nk-orthog}, and
\begin{align}\label{eq-tlam}
\tlam_{k,n}=\lambda_{k,n}-n^2=-(4k(k+n+1)+2|n|+n^2).
\end{align}
For any $\x,\y\in \B$, we let
\[
\tp_{k,n}(x,\y):=\int_\y p_{k,n}(x,y)d\mu_\y(y)
\]
Also
\[
\tp_{k,n}(\x,\y):=\int_\x\int_\y p_{k,n}(x,y)d\mu_\x(x)d\mu_\y(y).
\]
Applying disintegration formula to \eqref{eq-p-nk-orthog} we then have that 
\begin{align}\label{eq-ck-equiv-pt}
\int_\B \int_\z p_{k,n} (x,z)p_{\ell,m} (z,y)d\mu_\z(z)d\tmu(\z)=\delta_{k,n}(\ell,m)p_{k,n} (x,y), \quad\text{for all }n,k,m,\ell \ge0.
\end{align}
Integrating both sides on $\x$ and $\y$ we obtain
\[
\int_\B \int_\z \tp_{k.n} (\x,z)\tp_{\ell,m} (z,\y)d\mu_\z(z)d\tmu(\z)=\delta_{k,n}(\ell,m)\tp_{k,n} (\x,\y), \quad\text{for all }n,k,m,\ell \ge0.
\]
It remains to show that 
\begin{align}\label{eq-ck-equiv}
\int_\B \int_\z \tp_{k,n} (\x,z)\tp_{\ell,m} (z,\y)d\mu_\z(z)d\tmu(\z)=\int_\B  \tp_{k,n} (\x,\z)\tp_{\ell,m} (\z,\y)d\tmu(\z).
\end{align}
We denote $F_{k,n}(x,y)=(2k+|n|+1)(\cos r(x,y))^{|n|}P_k^{0,|n|}(\cos 2r(x,y))$. By definition of $r$, we can also extend the definition of function $F_{k,n}$ to $\B\times \B$. In fact, for any $x\in\x$, $y\in\y$ we have that $F_{k,n}(x,y)=F_{k,n}(\x,\y)$. We denote
\[
\Theta_n(x,\y):= 2\int_\y \cos(n\theta(x,y))d\mu_{\y}(y) \quad \text{for } n\ge1 \text{ and } \quad \Theta_0(x,\y)=1.
\]
Additionally let
\[
\Theta_n(\x,\y):= 2\int_\x\int_\y \cos(n\theta(x,y))d\mu_{\y}(y)d\mu_{\x}(x) \quad \text{for } n\ge1 \text{ and } \quad \Theta_0(\x,\y)=1.
\]
Clearly we have 
\[
\tp_{k,n}(x,\y)=F_{k,n}(\x,\y)\Theta_n(x,\y)\quad \mbox{and}\quad \tp_{k,n}(\x,\y)=F_{k,n}(\x,\y)\Theta_n(\x,\y)
\]
Plugging the above expressions into \eqref{eq-ck-equiv} we see that it is equivalent to show
\[
 \int_\z {\Theta}_n(\x,z){\Theta}_n(\y,z)   d\mu_{\z}(z) =\Theta_n(\y,\z)\Theta_n(\x,\z),\quad \mbox{for all }x,z\in \z.
\]
It suffices to show that ${\Theta}_n(\x,z)={\Theta}_n(\x,\z)$ for all $z\in \z$, which we prove in the lemma below. The proof of present lemma is then completed.
\end{proof}

\begin{lemma}\label{lemma-Theta-delta}
Let $\Theta_n(x,\y)$ and $\Theta_n(\x,\y)$ be as given in Lemma \ref{lemma-tq-t}. Then for all $\x, \y\in \B$  and all $x\in \x$, 
\[
{\Theta}_n(x,\y)={\Theta}_n(\x,\y)=\delta_0(n).
\]
\end{lemma}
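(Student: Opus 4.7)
The case $n=0$ is immediate: $\Theta_0(x,\y)=1=\Theta_0(\x,\y)=\delta_0(0)$ by definition. For $n\geq 1$, I will show that $\int_\y \cos(n\theta(x,y))\,d\mu_\y(y)=0$ for every $x\in M$ and $\y\in \B$. Integrating in $x\in\x$ then yields $\Theta_n(\x,\y)=0$, and in particular $\Theta_n(x,\y)=\Theta_n(\x,\y)$.

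\textbf{Strategy.} The plan is to exploit the orthogonality \eqref{eq-p-nk-orthog} together with the observation that the $n=0$ eigenfunctions are already fiber-constant. Indeed, for every $\ell\geq 0$ and $x'\in M$,
\[
p_{\ell,0}(x',y)=(2\ell+1)P_\ell(\cos 2r(x',y))
\]
depends on $y$ only through $r(x',y)=r(\x',\pi(y))$, hence descends to $\B$. Let $\mathcal{F}_0\subset L^2(M,\mu)$ denote the closed linear span of $\{p_{\ell,0}(x',\cdot):\ell\geq 0,\ x'\in M\}$. By \eqref{eq-p-nk-orthog} applied with $(\ell,m)=(\ell,0)$, every $p_{k,n}(x,\cdot)$ with $n\geq 1$ satisfies $p_{k,n}(x,\cdot)\perp p_{\ell,0}(x',\cdot)$ in $L^2(M,\mu)$; hence $p_{k,n}(x,\cdot)\perp \mathcal{F}_0$.

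\textbf{Main step.} The heart of the proof will be the identification $\mathcal{F}_0=\pi^{*}L^2(\B,\tmu)$, i.e. $\mathcal{F}_0$ already spans all fiber-constant $L^2$ functions. Granted this, testing against indicators $h=\mathbf{1}_{\pi^{-1}(A)}$ and disintegrating through Lemma \ref{lemma-disint} gives
\[
\int_A F_{k,n}(\x,\y)\,\Theta_n(x,\y)\,d\tmu(\y)=0
\]
for every Borel $A\subset\B$ and every $n\geq 1$, so $F_{k,n}(\x,\y)\Theta_n(x,\y)=0$ for $\tmu$-a.e.\ $\y$. Taking $k=0$, the prefactor $F_{0,n}(\x,\y)=(n+1)(\cos r(\x,\y))^n$ is strictly positive (since $r<\pi/2$ by Assumption \ref{Assump1}), forcing $\Theta_n(x,\y)=0$ $\tmu$-a.e., and continuity of eigenfunctions (from heat kernel smoothness) extends this to all $\y$.

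\textbf{Main obstacle.} The crux is the identification $\mathcal{F}_0=\pi^{*}L^2(\B,\tmu)$. One cannot cite ``$\tq_t$ restricted to the $n=0$ spectrum is the heat kernel on $\B$,'' since that assertion is essentially equivalent to the lemma itself. The cleanest route is a Stone--Weierstrass argument on $(\B,r)$: this space is compact (closed balls in $(M,\delta)$ are compact by Lemma \ref{lemma-local-cpmt} and the diameter is bounded by $\pi$), the pseudo-metric $r$ separates points of $\B$ by construction of $\B=M/\sim$, so the functions $\y\mapsto \cos 2r(\x',\y)$ (for varying $\x'\in\B$) generate together with the constants a point-separating subalgebra of $C(\B)$, whose uniform closure is $C(\B)$. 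Since Legendre polynomials $P_\ell$ are polynomials in their argument, this algebra is contained in the linear span of $\{\y\mapsto P_\ell(\cos 2r(\x',\y))\}_{\ell,\x'}$; combining with $L^2$-density of $C(\B)$ in $L^2(\B,\tmu)$ and pulling back via $\pi^{*}$ completes the identification.
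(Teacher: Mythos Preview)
Your overall strategy is sound and the endgame (using $F_{0,n}=(n+1)(\cos r)^n>0$ to strip off the prefactor) matches the paper. The gap is in the Stone--Weierstrass step. You assert that the point-separating subalgebra $\mathcal{A}$ generated by $\{1\}\cup\{\cos 2r(\x',\cdot):\x'\in\B\}$ ``is contained in the linear span of $\{\y\mapsto P_\ell(\cos 2r(\x',\y))\}_{\ell,\x'}$''. This is false: that linear span equals $\operatorname{Span}\{(\cos 2r(\x',\cdot))^j : j\geq 0,\ \x'\in\B\}$ and is \emph{not} closed under multiplication. A cross-product such as $\cos 2r(\x'_1,\y)\cdot\cos 2r(\x'_2,\y)$ with $\x'_1\neq\x'_2$ lies in $\mathcal{A}$ but has no reason to be a finite linear combination of single-base-point powers $(\cos 2r(\x',\y))^j$; such a decomposition would amount to an addition theorem for zonal functions, i.e.\ essentially the spherical structure on $\B$ that the whole section is trying to establish. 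So Stone--Weierstrass gives you $\overline{\mathcal{A}}=C(\B)$, but you have not shown $\mathcal{A}\subset\mathcal{F}_0$, and the identification $\mathcal{F}_0=\pi^{*}L^2(\B,\tmu)$ remains unproved.

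The paper circumvents this by working with a \emph{single} base point: from \eqref{eq-p-nk-orthog} with $x=y$ and $n=m=0$ it first reads off that $\{F_{k,0}(\x,\cdot)\}_{k\geq 0}$ is orthogonal in $L^2(\B,\tmu)$, and then invokes the Legendre reproducing identity $\sum_k \tfrac{2k+1}{2}P_k(a)P_k(b)=\delta(a-b)$ to argue that this one-parameter family is already complete in $L^2(\B,\tmu)$. Setting $n=0$, $\ell=0$ in the same diagonal identity then yields $\int_\B F_{k,0}(\x,\z)F_{0,m}(\x,\z)\Theta_m(x,\z)\,d\tmu(\z)=0$ for all $k\geq 0$ and $m\geq 1$, and completeness of $\{F_{k,0}(\x,\cdot)\}_k$ forces $(\cos r(\x,\z))^m\Theta_m(x,\z)\equiv 0$. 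You could repair your argument by replacing the Stone--Weierstrass step with this single-base-point completeness.
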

\begin{proof}

Note from \eqref{eq-ck-equiv-pt} we have that
\begin{align*}
&\int_\B \int_\z F_{k,n}(\x,\z)F_{\ell,m}(\z,\y)(e^{in\theta(x,z)}+e^{-in\theta(x,z)})(e^{im\theta(z,y)}+e^{-im\theta(z,y)})d\mu_\z(z)d\tmu(\z)\\
&\quad=\delta_{k,n}(\ell,m)p_{k,n} (x,y).
\end{align*}
Plugging in $x=y$  we have that
\begin{align}\label{eq-identity}
\delta_{k,n}(\ell,m)(|n|+1)&=\int_\B \int_\z F_{k,n}(\x,\z)F_{\ell,m}(\x,\z) \big(\cos((n-m)\theta(x,z))+\cos((n+m)\theta(x,z))\big)d\mu_\z(z)d\tmu(\z) \notag\\
&=\int_\B F_{k,n}(\x,\z)F_{\ell,m}(\x,\z)\big( \Theta_{n-m}(x,\z)+ \Theta_{n+m}(x,\z)\big)d\tmu(\z).
\end{align}
Plugging in $n=m=0$ we have that 
\[
\delta_{k}(\ell)=2\int_\B F_{k,0}(\x,\z)F_{\ell,0}(\x,\z)d\tmu(\z)
\]
for all $n\in \mathbb Z$ and $k,\ell\geqslant 0$. This implies that the families of functions $\{F_{k,0}\}_{k\ge0}$ are orthogonal in $L^2(\B,\tmu)$. We further claim that $F_{k,0}(\x,\cdot)$, $k\ge0$ form a basis in $L^2(\B,\tmu)$. 
To see this, we only need to check completeness of $F_{k,0}$ as a basis in $L^2(\B,\tmu)$.
Recall its definition 
\[
F_{k,0}(\x,\y):=(2k+1)P_k^{0,0}(\cos 2r(\x,\y)),
\]
where $P_k^{0,0}(\cdot)$ is the Laguerre polynomial. It is a known fact that 
\[
\sum_{k=0}^\infty\frac{2k+1}{2}P_k^{0,0}(a)P_k^{0,0}(b)=\delta(a-b).
\]
Let $a=\cos 2r(\x,\y)$, $b=0$ we then obtain that 
\[
\frac12\sum_{k=0}^\infty F_{k,0}(\x,\y)=\delta(\x,\y).
\]
This implies that for any $f\in L^2(\B,\tmu)$,
\[
\sum_{k=0}^\infty \int_\B F_{k,0}(\x,\y)f(\y)d\tmu(\y)=2f(\x).
\]
Therefore we have that $\int_\B F_{k,0}(\x,\y)f(\y)d\tmu(\y)=0$ for all $k\ge0$ implies that $f=0$. This proves the completeness of  $F_{k,0}(\z,\cdot )$, $k\ge0$ as an orthogonal basis in $L^2(\B,\tmu)$.

Now set $n=0$ and $\ell=0$ in \eqref{eq-identity}. Then for all $m>0, k\ge0$ and all $\x,\z \in \B$, $x\in \x$,
\begin{align}\label{eq-identity-1}
0=\int_\B F_{k,0}(\x,\z)F_{0,m}(\x,\z)\Theta_{m}(x,\z)d\tmu(\z).
\end{align}
Note that $F_{0,m}(\x,\z)=(m+1)(\cos r(\x,\z))^m$. 
Using the fact that $F_{k,0}(\x,\cdot )$, $k\ge0$ is an orthogonal basis in $L^2(\B,\tmu)$ we obtain that 
\[
(\cos r(\x,\z))^m\Theta_{m}(x,\z)\equiv0,\quad \z\in\B
\]
for all $x\in \x$, $\x\in\B$. We then obtain the conclusion.

\end{proof}

As a consequence we obtain an explicit expression for $\tq_t$.
\begin{corollary} 
Let $\tq_t(\x,\y)$ be the heat kernel on $(\B, r,\tmu)$ as given in Proposition \ref{lemma-tq-t}. Then it can be written as
\begin{align}\label{eq-tq}
\tq_t(\x,\y)=\sum_{k=0}^\infty e^{\tlam_{k,0}t}F_{k,0}(r(\x,\y)),\quad \x, \y \in\B
\end{align}
where $\tlam_{k,0}=\lambda_{k,0}=-4k(k+1)$, and $F_{k,0}(r(\x,\y))=(2k+1)P_k^{0,0}(\cos 2r(x,y))$.
\end{corollary}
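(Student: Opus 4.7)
The plan is to start from Definition \ref{def-tqt} and substitute the spectral expansion \eqref{eq-qt-spect} of $q_t$, then use Lemma \ref{lemma-Theta-delta} to kill all terms with $n\neq 0$.

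First I would write
\[
\tq_t(\x,\y) = \int_\x\int_\y q_t(x,y)\,d\mu_\y(y)\,d\mu_\x(x)
= \int_\x\int_\y \sum_{n=0}^\infty\sum_{k=0}^\infty p_{k,n}(x,y) e^{\tlam_{k,n} t} d\mu_\y(y)\,d\mu_\x(x),
\]
and justify the interchange of the double sum with the integral against $\mu_\x \times \mu_\y$ using absolute convergence (the series for $q_t$ converges absolutely for $t>0$ by the same Jacobi-polynomial growth estimate used in Proposition \ref{p.HeatKernel}, and each $\mu_\x,\mu_\y$ is a probability measure by Lemma \ref{lemma-disint}). This yields
\[
\tq_t(\x,\y)=\sum_{n=0}^\infty\sum_{k=0}^\infty e^{\tlam_{k,n}t}\,\tp_{k,n}(\x,\y).
\]

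Next, I would exploit the factorization of $p_{k,n}(x,y)$ observed in the proof of Proposition \ref{lemma-tq-t}: since $r(x,y)$ depends only on the orbits $\x,\y$, the Jacobi-polynomial factor $F_{k,n}(\x,\y)=(2k+n+1)(\cos r(\x,\y))^{n}P_k^{0,n}(\cos 2r(\x,\y))$ is constant on $\x\times\y$, so that it pulls out of the integrals and gives
\[
\tp_{k,n}(\x,\y)=F_{k,n}(\x,\y)\,\Theta_n(\x,\y).
\]
Now I would invoke Lemma \ref{lemma-Theta-delta}, which tells us $\Theta_n(\x,\y)=\delta_0(n)$. Consequently $\tp_{k,n}(\x,\y)=0$ for every $n\geqslant 1$, and only the $n=0$ slice of the double sum survives.

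Finally, I would observe that $\Theta_0\equiv 1$ by definition, so $\tp_{k,0}(\x,\y)=F_{k,0}(r(\x,\y))=(2k+1)P_k^{0,0}(\cos 2r(\x,\y))$, while the eigenvalue simplifies to $\tlam_{k,0}=-(4k(k+1)+0+0)=\lambda_{k,0}$ because the $n^{2}$ correction in \eqref{eq-tlam} vanishes at $n=0$. Combining these, we obtain
\[
\tq_t(\x,\y)=\sum_{k=0}^\infty e^{\tlam_{k,0} t}F_{k,0}(r(\x,\y)),
\]
which is the claim. There is no real obstacle beyond justifying the interchange of sum and integral; the substantive content has already been done in Lemma \ref{lemma-Theta-delta}, and the present corollary is merely a clean restatement once that vanishing is available.
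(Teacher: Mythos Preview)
Your proposal is correct and is exactly the approach the paper takes: the paper's own proof is the single sentence ``The conclusion follows immediately from Lemma~\ref{lemma-Theta-delta},'' and you have simply unpacked what that means, using the factorization $\tp_{k,n}=F_{k,n}\Theta_n$ already established in the proof of Proposition~\ref{lemma-tq-t} together with $\Theta_n(\x,\y)=\delta_0(n)$.
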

\begin{proof}
The conclusion follows immediately from Lemma \ref{lemma-Theta-delta}.
\end{proof}
Using the heat kernel $\tq_t$ we can then define the associated heat semigroup $\tilde{P}_t$ on $L^2(\B, \tilde{\mu})$ by
\begin{align}\label{eq-tildeqt-1}
\tilde{P}_t f (\x)=\int_\B \tq_t(\x,\y) f(\y) d\tmu(\y), \quad t\ge0.
\end{align}

We can consequently define the Dirichlet form $\tce$ on $L^2(\tmu)$ by
\begin{equation}\label{eq-proj-E}
\tce(f,g):=\lim_{t\to0}\frac1t\int_\B {(\tilde{P}_tf-f)g}\,d\tmu, \quad f,g\in L^2(\tmu)
\end{equation}
and the self-adjoint generator $\tilde{L}$:
\begin{equation}\label{eq-proj-L}
(\tL f,g)_{L^2(\tmu)}:=\tce(f,g), \quad f,g\in L^2(\tmu).
\end{equation}

With the above result we can then obtain volume growth estimate on $(\B,r,\tmu)$ and hence the dimension estimate.
\begin{lemma} \label{lemma-Hasud-dim-B}
The Hausdorff dimension of $(\B,r,\tmu)$ is at least $2$.
\end{lemma}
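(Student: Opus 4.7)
The plan is to mimic the argument used for Proposition~\ref{cor-dim-3}: first establish a volume growth bound $\tmu(B_r(\x,\rho)) \leqslant C\rho^{2}$ for small $\rho$, and then appeal to Frostman's lemma as before.

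The key input is a Gaussian-type small time lower bound for $\tq_t$ analogous to the one in Lemma~\ref{lemma-Hasud-dim}. From the explicit spectral expansion \eqref{eq-tq},
\[
\tq_t(\x,\y)=\sum_{k=0}^\infty (2k+1)\,e^{-4k(k+1)t}\,P_k^{0,0}(\cos 2r(\x,\y)),
\]
which coincides (up to rescaling) with the heat kernel associated with the second-order operator
\[
\tilde L = \tfrac{\partial^2}{\partial r^2}+2\cot(2r)\tfrac{\partial}{\partial r}\cdot 2,
\]
that is, the Jacobi heat kernel on $[0,\pi/2)$ with Legendre eigenfunctions $P_k^{0,0}(\cos 2r)$. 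This is a well-studied kernel, and by Poisson-type summation (as was already done for $q_t$ in Definition~\ref{def-q} and Lemma~\ref{lemma-kernel-equi}) one obtains the small-time asymptotic
\[
\tq_t(\x,\y)\;\sim\;\frac{C}{t}\,\exp\!\left(-\frac{r(\x,\y)^2}{4t}\right)(1+o(1)),\qquad t\to 0^+,
\]
uniformly for $\x,\y$ with $r(\x,\y)$ bounded away from $\pi/2$. In particular there exist $t_0,\rho_0>0$ and a constant $c>0$ such that for all $t\in(0,t_0)$, $\rho\in(0,\rho_0)$, and $\y\in B_r(\x,\rho)$,
\[
\tq_t(\x,\y)\;\geqslant\;\frac{c}{t}\,\exp\!\left(-\frac{r(\x,\y)^2}{4t}\right).
\]

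Given such a lower bound, the rest is identical to the proof of Lemma~\ref{lemma-Hasud-dim}. Using stochastic completeness of $\tq_t$ on $(\B,r,\tmu)$,
\[
1=\int_\B \tq_t(\x,\y)\,d\tmu(\y)\;\geqslant\;\int_{B_r(\x,\rho)} \frac{c}{t}\,e^{-r(\x,\y)^2/4t}\,d\tmu(\y)\;\geqslant\;\frac{c}{t}\,e^{-\rho^2/4t}\,\tmu(B_r(\x,\rho)),
\]
which gives $\tmu(B_r(\x,\rho))\leqslant C t\,e^{\rho^2/4t}$. Choosing $t=\rho^2$ yields $\tmu(B_r(\x,\rho))\leqslant C'\rho^2$ for all $\rho\leqslant \rho_0\wedge 1$. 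Frostman's lemma then implies that the Hausdorff dimension of $(\B,r,\tmu)$ is at least $2$.

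The main obstacle here is justifying the Gaussian-type lower bound on $\tq_t$. The cleanest route is to recognize that \eqref{eq-tq} is the transition density of a diffusion whose radial coordinate $r$ satisfies the one-dimensional SDE with drift $2\cot(2r)$ on $[0,\pi/2)$, identical to the radial part of Brownian motion on the round $2$-sphere of radius $\tfrac12$. From there the standard short-time off-diagonal asymptotics (or an application of the Poisson summation formula analogous to the one already carried out for $q$ in Definition~\ref{def-q}, replacing the three-dimensional spherical weight by the two-dimensional one) produces exactly the required bound.
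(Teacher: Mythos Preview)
Your proposal is correct and follows essentially the same route as the paper: identify $\tq_t$ from \eqref{eq-tq} as a Jacobi heat kernel, extract a Gaussian-type lower bound $\tq_t(\x,\y)\gtrsim t^{-1}e^{-c\,r(\x,\y)^2/t}$, integrate against stochastic completeness to get $\tmu(B_r(\x,\rho))\leqslant C\rho^2$, and conclude via Frostman. The only real difference is in how the Gaussian bound is justified: the paper simply invokes the sharp Jacobi heat kernel estimates of Nowak--Sj\"ogren \cite{NowakSjogren2013} for $G_t^{0,0}$ (noting $\tq_t(\x,\y)=G_{4t}^{0,0}(\cos 2r(\x,\y))$), whereas you propose either a Poisson summation argument or the identification with the radial part of Brownian motion on the round $2$-sphere of radius $\tfrac12$. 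Both of your suggested routes work and are arguably more self-contained, while the paper's citation is shorter; the end estimate and the rest of the argument are identical.
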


\begin{proof}
From \eqref{eq-tq} know that $\tq_t(\x,\y)=\sum_{k=0}^\infty(2k+1)e^{-(4k(k+1))t}P_k^{0,0}(\cos 2r(\x,\y))$. From \cite{NowakSjogren2013} we know that for any $-1\leqslant x\leqslant 1$, $t>0$, if
\[
G_t^{\alpha,\beta}(x):=\sum_{k=0}^\infty C^{\alpha, \beta}_k\exp (-k(k+\alpha+\beta+1)t)P_k^{\alpha,\beta}(x)
\]
where $C^{\alpha, \beta}_k$ are some renormalization constants,
then there exist constant $C, c_1, c_2>0$ and $t_0>0$ such that for all $0<t<t_0$ and all $r\in[0,{\pi})$,
\[
\frac1C t^{-\alpha-1/2}(t+(\pi-r))^{-\beta-\frac12}\frac{1}{\sqrt{t}}e^{-c_1\frac{r^2}{t}}\leqslant G_t^{\alpha,\beta}(\cos r)\leqslant C t^{-\alpha-1/2}(t+(\pi-r))^{-\beta-\frac12}\frac{1}{\sqrt{t}}e^{-c_2\frac{r^2}{t}}\
\]
In our case since $\tq_t(\x,\y)=G^{0,0}_{4t}(\cos 2r(\x,\y))$, we can then easily obtain that for all $0<t<t_0$ and $\x,\y\in \B$
\[
 \frac{1}{4Ct}\frac{e^{-c_1\frac{r(\x,\y)^2}{t}}}{\sqrt{t+(\pi-2r(\x,\y))}}\leqslant \tq_t(\x,\y)\leqslant  \frac{C}{4t}\frac{e^{-c_2\frac{r(\x,\y)^2}{t}}}{\sqrt{t+(\pi-2r(\x,\y))}}.
\]
Therefore for any $0<t<t_0$ and $\x,\y\in \B$, we have
\[
 \frac{1}{4C^{\prime}t}\ e^{-c_1\frac{r(\x,\y)^2}{t}} \leqslant \tq_t(\x,\y)\leqslant  \frac{C^{\prime}}{4t\sqrt{\pi-2r(\x,\y)}}e^{-c_2\frac{r(\x,\y)^2}{t}}.
\]
At last by using classical argument  (see also \cite{CarronTewodrose2022}), we obtain that there exists $C>0$ and $\rho_0>0$  such that for any $\x\in \B$, $\rho\in(0,\rho_0)$,
\[
\tilde{\mu}(B_r(\x,\rho ))\leqslant C \rho^2
\]
where $B_r(\x,\rho )$ denotes the  ball in $(\B, r)$ centered at $\x$ of radius $\rho$. Therefore $\dim \B\geqslant 2$.
\end{proof}

\begin{proposition}\label{prop-lengthy-B}
The metric measure space $(\B,r,\tmu)$ is a geodesic space. 
\end{proposition}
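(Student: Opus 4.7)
The plan is to follow the same strategy as in the proof of Proposition \ref{prop-lengthy} (the length-space argument for $(M,\delta,\mu)$), now applied to the projected data $(\B, r, \tmu)$ with the heat kernel $\tq_t$ established in Proposition \ref{lemma-tq-t} and the explicit formula \eqref{eq-tq}. By \cite[Theorem 2.4.26]{BBI01}, it suffices to show that $(\B, r)$ is a complete, locally compact metric space admitting midpoints, i.e.\ that for every $\x, \y \in \B$ there exists $\m \in \B$ with $r(\x,\m) = r(\m,\y) = \tfrac{1}{2} r(\x,\y)$.

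First I would verify completeness and local compactness of $(\B, r)$. Completeness follows from Assumption~\ref{Assump1} and the construction of $r$ on the quotient. For local compactness I would mirror the argument of Lemma~\ref{lemma-local-cpmt}: fix a closed $r$-ball, consider a sequence $\{\x_k\}$ in it, set $\tilde{u}_k(\cdot) := \tq_t(\x_k, \cdot)$, and $\tilde v_k := \tilde P_t \tilde u_k = \tq_{2t}(\x_k,\cdot)$. Using the semigroup property \eqref{eq-qt-chap-Kol} (adapted to $\tq_t$, which was proved in Proposition~\ref{lemma-tq-t}(iv)) and the two-sided Gaussian bounds derived in the proof of Lemma~\ref{lemma-Hasud-dim-B} from \cite{NowakSjogren2013}, the sequence $\{\tilde v_k\}$ is Cauchy in $L^2(\tmu)$, which forces $\{\x_k\}$ to be Cauchy with respect to $r$.

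Next I would establish the small-time logarithmic asymptotic
\[
\lim_{t \to 0} -4t \log \tq_t(\x, \y) = r^2(\x, \y),
\]
analogous to \eqref{eq-LDP}. This follows from either (a) sharpening the Jacobi-heat-kernel bounds of \cite{NowakSjogren2013} used in Lemma~\ref{lemma-Hasud-dim-B} to matching leading constants, or, more cleanly, (b) recognizing from \eqref{eq-tq} that $\tq_t$ is (up to change of variable $2r$) the standard heat kernel on the round $2$-sphere of radius $\tfrac{1}{2}$ and invoking the classical on-sphere Varadhan asymptotics. With the LDP in hand, I would use Chapman-Kolmogorov (Proposition~\ref{lemma-tq-t}(iv)) to write
\[
\tq_{2t}(\x,\y) = \int_\B \tq_t(\x,\z)\,\tq_t(\z,\y)\, d\tmu(\z),
\]
substitute $\tq_t(\x,\z) = \exp\!\bigl(-r^2(\x,\z)/(4t) + o(1/t)\bigr)$ on both sides, raise to the power $t$, and send $t \to 0^+$ exactly as in the proof of Proposition~\ref{prop-lengthy}. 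This yields
\[
\inf_{\z \in \B}\bigl\{\, r^2(\x,\z) + r^2(\z,\y)\,\bigr\} = \tfrac{1}{2}\, r^2(\x,\y).
\]
By local compactness, the infimum is attained by some $\m \in \B$; combining this identity with the elementary inequality $r^2(\x,\m) + r^2(\m,\y) \geq \tfrac{1}{2} r^2(\x,\y) + \tfrac{1}{2}(r(\x,\m) - r(\m,\y))^2$, one concludes $r(\x,\m) = r(\m,\y) = \tfrac{1}{2} r(\x,\y)$, and \cite[Theorem~2.4.26]{BBI01} then gives the conclusion.

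The main obstacle is the sharp LDP constant in the second step: the bounds cited from \cite{NowakSjogren2013} carry potentially distinct constants $c_1, c_2$ and a factor $(\pi - 2r)^{-1/2}$ that must be shown to be subexponential in $1/t$ (true as long as $r(\x,\y) < \pi/2$, which is built into Assumption~\ref{Assump1}). Once the Varadhan-type logarithmic asymptotic is secured uniformly on compact subsets away from the cut locus, the rest of the argument is a direct transcription of Proposition~\ref{prop-lengthy}.
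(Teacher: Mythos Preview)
Your proposal is correct and follows essentially the same route as the paper: obtain Gaussian-type two-sided bounds on $\tq_t$ with the sharp exponential constant, combine with Chapman--Kolmogorov (Proposition~\ref{lemma-tq-t}(iv)), and transcribe the midpoint argument of Proposition~\ref{prop-lengthy} verbatim. The paper's own proof is in fact terser than yours---it simply asserts the bound $\frac{1}{Ct}e^{-r(\x,\y)^2/(4t)}\leqslant \tq_t(\x,\y)\leqslant \frac{C}{t}e^{-r(\x,\y)^2/(4t)}$ directly from \eqref{eq-tq} and then refers to Proposition~\ref{prop-lengthy}---so your more explicit treatment of local compactness and of the matching Varadhan constant (via the identification of $\tq_t$ with the round $2$-sphere heat kernel) fills in details the paper leaves implicit.
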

\begin{proof}
First note from \eqref{eq-tq} we know that the heat kernel $\tq_t$ on $(\B,r,\tmu)$ satisfies 
\[
\frac1{Ct}e^{-\frac{r(\x,\y)^2}{4}}\le\tq_t(\x,\y)\le\frac{C}{t}e^{-\frac{r(\x,\y)^2}{4}},\quad 0<t<t_0, \ \x,\y\in\B
\]
for some $C>1$ and $t_0>0$. Combining with Lemma \ref{lemma-tq-t} (iv) and applying the same argument as in the proof of Proposition \ref{prop-lengthy}, we then obtain the conclusion.
\end{proof}

\subsection{The second isometry}
Using \eqref{eq-tq}, we apply similar argument as in the previous two sections. 
\begin{lemma}\label{lemma-spectrum-1}
Let $\left(\B, r, \tmu, \tce \right)$ as given in  and let $\tL$ be the associated self-adjoint operator as given in \eqref{eq-proj-L}. Then $\tL$ has discrete spectrum $\{\lambda'_{k,0}\}_{k\geqslant 0}$ where $\lambda'_{k,0}$ is given as in \eqref{eq-tq}. Particularly we have
\begin{equation}\label{eq-eign-fun-B}
\tL\tp_{k,0}(\x,\cdot)=\lambda'_{k,0}\tp_{k,0}(\x,\cdot)
\end{equation}
where $\tp_{k,0}(\x,\y)=F_{k,0}(r(\x,\y))$.
\end{lemma}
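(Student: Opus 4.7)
The plan is to mirror the proof of Lemma~\ref{lemma-spectrum} verbatim, replacing $(M,\delta,\mu)$ with $(\B, r, \tmu)$ and $p_t$ with $\tq_t$. First I would show that $\tilde{P}_t$ is a Hilbert--Schmidt (hence compact) operator on $L^2(\B, \tmu)$. Using the explicit expansion \eqref{eq-tq} together with the Szeg\H{o}-type estimate $\max_{x\in[-1,1]}|P_k^{0,0}(x)|\sim 1$ as $k\to\infty$ from \cite[Theorem~7.32.1]{SzegoBook1939}, the series representing $\tq_t(\x,\y)$ converges absolutely and uniformly for each $t>0$, and the computation
\[
\int_\B \int_\B \tq_t(\x,\y)^2\, d\tmu(\x)d\tmu(\y)=\int_\B \tq_{2t}(\x,\x)\, d\tmu(\x)=\sum_{k=0}^\infty (2k+1) e^{-8k(k+1)t}<\infty
\]
uses the semigroup property (Proposition~\ref{lemma-tq-t}(iv)) together with the orthogonality relation $\int_\B F_{k,0}(r(\x,\y))F_{\ell,0}(r(\x,\y))d\tmu(\y)=\tfrac12\delta_{k\ell}$ established inside the proof of Lemma~\ref{lemma-Theta-delta}.

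By the spectral theorem for compact self-adjoint operators, $\tilde{P}_t$ has a discrete spectrum; since $\tL$ is the generator of $\tilde{P}_t$, it also has a discrete spectrum. To identify the eigenvalues as $\{\lambda'_{k,0}\}_{k\geqslant 0}$, I would expand $\langle \tilde{P}_t f, f\rangle_{L^2(\tmu)}$ against the spectral measure and against the orthogonal family $\{F_{k,0}(r(\x,\cdot))\}_{k\geqslant 0}$, exactly as in equation \eqref{e.EigenfunctionExpansion}: the two expansions must agree, so the support of the spectral measure is precisely $\{\lambda'_{k,0}\}_{k\geqslant 0}$, each with finite multiplicity.

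For the eigenfunction statement, let $\tilde{E}_{k,0}:=\mathrm{Ker}(\tL-\lambda'_{k,0}\,\mathrm{Id})\subset L^2(\B,\tmu)$ and let $\tilde{P}_{k,0}$ be the corresponding orthogonal projection. The spectral decomposition of $\tilde{P}_t$ combined with the orthogonality of $\{F_{k,0}(r(\x,\cdot))\}_{k\geqslant 0}$ identifies $\tilde{P}_{k,0}$ as the integral operator with kernel $\tp_{k,0}(\x,\y)=F_{k,0}(r(\x,\y))$. Because $\tilde{P}_{k,0}$ commutes with $\tL$ and maps $L^2(\B,\tmu)$ into $\tilde{E}_{k,0}\subset \dom(\tL)$, an approximation argument identical to the one in \cite{CarronTewodrose2022} used in the proof of Lemma~\ref{lemma-spectrum} shows $\tp_{k,0}(\x,\cdot)\in \dom(\tL)$ and $\tL\tp_{k,0}(\x,\cdot)=\lambda'_{k,0}\tp_{k,0}(\x,\cdot)$, which is \eqref{eq-eign-fun-B}.

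The only nontrivial step is the absolute/uniform convergence needed to justify the Hilbert--Schmidt bound and to interchange summation with the spectral resolution; but this is handled by the same Szeg\H{o}-type polynomial estimates already invoked in Proposition~\ref{p.HeatKernel}, and in the present one-parameter case $(n=0)$ it is in fact simpler since Jacobi polynomials $P_k^{0,0}$ are uniformly bounded on $[-1,1]$. Thus no new analytic input is required beyond what has already been developed in Section~\ref{sec-setting}.
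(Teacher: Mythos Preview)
Your proposal is correct and follows essentially the same approach as the paper's own proof, which likewise expands $\langle \tilde P_t f,f\rangle$ against the spectral measure using \eqref{eq-tq}, invokes uniqueness to identify the discrete spectrum $\{\lambda'_{k,0}\}_{k\geqslant 0}$, and then argues exactly as you do (via the projection $\tP_{k,0}$ commuting with $\tL$ and the \cite{CarronTewodrose2022} argument) that $\tp_{k,0}(\x,\cdot)\in\dom(\tL)$ satisfies \eqref{eq-eign-fun-B}. The only difference is cosmetic: the paper omits the explicit Hilbert--Schmidt computation you spell out, presumably because the analogous bound was already established in Proposition~\ref{p.HeatKernel} and Lemma~\ref{lemma-spectrum}.
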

\begin{proof}
For any fixed $f\in L^2(\B,\tmu)$, let $(f, \tE_\lambda f)$ be the projection-valued measure of $\tL$ associated with $f$. Then from \eqref{eq-tq} we have
\[
\int_{-\infty}^\infty e^{t\lambda}d(f, E_\lambda f)=\sum_{k=0}^\infty e^{\tlam_{k,0}t}\int_{\B\times \B}\tp_{k,0}(\x,\y)f(\x)f(\y)d\tmu(\x)d\tmu(\y),
\]
By uniqueness of the map $f\to(f, \tilde{E}_\lambda f)$ we know that $\tL$ has the discrete spectrum $\{\tlam_{k,0}\}_{k\geqslant 0}$.  

We denote by $\tE_{k,0}:=\mathrm{Ker}(\tL-\tlam_{k,0}Id)$ the eigenspace in $L^2(\B, \tmu)$ that corresponds to $\tlam_{k,0}$, and by $\tP_{k,0}:L^2(M, \mu) \longrightarrow \tE_{k,0}$ the projection operator. One can then  easily verify that $\tP_{k,n}$ is an integral operator with the kernel $\tp_{k,0}$, namely for any $f\in L^2(\B, \tmu)$ we have that 
\begin{equation}\label{eq-tP-k0}
\tP_{k,0} f(\x)=\int_\B  \tp_{k,0}(\x,\y)f(\y)d\tmu (\y).
\end{equation}
Using the fact that $\tP_{k,0}$ commutes with $\tL$ and similar arguments as in \cite{CarronTewodrose2022} we obtain that $\tp_{k,0}$ is in the domain of $\tL$. Therefore \eqref{eq-eign-fun-B}
holds.
\end{proof}

Now let us focus on the eigenvalue $\tlam_{1,0}=-8$ and a corresponding eigenfunction.
 \begin{align}\label{eq-tp-10}
 \tp_{1,0}(\x,\y)=3\cos 2r(\x,\y), \quad \x,\y\in \B.
 \end{align}
For any $\x,\y\in \B$ we have that
\[
\tL_\y\cos 2r(\x,\y)=-8\cos 2r(\x,\y).
\]
\begin{lemma}
Consider  the eigenspace $\tE_{1,0}$  of $\tL$ corresponding to $\lambda'_{1,0}=-9$, then $\dim \tE_{0,1}=3$ and 
\begin{align}\label{eq-tE01}
\tE_{1,0}=\operatorname{Span}\{\cos 2r(\x,\cdot), \x\in \B\}.
\end{align}
\end{lemma}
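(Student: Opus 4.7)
The plan is to mirror, in the quotient setting, the argument used for $E_{0,1}$ in the first isometry section. First I would let $\{\psi_1,\dots,\psi_\ell\}$ be an orthonormal family of continuous eigenfunctions forming a basis of $\tE_{1,0}$ in $L^2(\B,\tmu)$. Since $\tP_{1,0}$, the spectral projection onto $\tE_{1,0}$, is the integral operator with kernel $\tp_{1,0}$ (by the analogue of \eqref{eq-tP-k0}), for any $f\in L^2(\B,\tmu)$ and $\tmu$-a.s. $\x\in\B$ one has
\begin{equation*}
\tP_{1,0}f(\x)=\int_{\B}3\cos 2r(\x,\y)f(\y)\,d\tmu(\y)=\sum_{i=1}^\ell \left(\int_{\B}\psi_i(\y)f(\y)\,d\tmu(\y)\right)\psi_i(\x).
\end{equation*}
Comparing the two integral representations of $\tP_{1,0}$ gives
\begin{equation*}
\sum_{i=1}^\ell \psi_i(\x)\psi_i(\y)=3\cos 2r(\x,\y)\quad \text{for $\tmu\otimes\tmu$-a.s. }(\x,\y)\in\B\times\B.
\end{equation*}

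Next I would specialize to $\x=\y$, obtaining $\sum_{i=1}^\ell \psi_i(\x)^2=3$ for $\tmu$-a.s. $\x$, and integrate against $\tmu$. Since $\tmu(\B)=1$ (this follows from $\tmu=\mu\circ\pi^{-1}$ and $\mu(M)=1$, see Proposition~\ref{p.HeatKernel}) and the $\psi_i$ are $L^2$-orthonormal, we conclude $\ell=3$, hence $\dim \tE_{1,0}=3$.

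For the span identification, let $W:=\operatorname{Span}\{\cos 2r(\x,\cdot):\x\in\B\}$. By Lemma~\ref{lemma-spectrum-1} (in particular \eqref{eq-eign-fun-B} with $k=1$, $n=0$), each generator $\cos 2r(\x,\cdot)=\tfrac13\tp_{1,0}(\x,\cdot)$ lies in $\tE_{1,0}$, so $W\subset\tE_{1,0}$. Conversely, if $g\in\tE_{1,0}$, then $g=\tP_{1,0}g$, which writes as
\begin{equation*}
g(\x)=\int_{\B}3\cos 2r(\x,\y)\,g(\y)\,d\tmu(\y),
\end{equation*}
and the same finite-rank argument used for $V_1$ in Section~\ref{sec-isom-1} (choose $\{\y_1,\dots,\y_3\}$ so that the evaluation functionals $\delta_{\y_j}$ dualize $\{\psi_j\}$) shows that $g$ is a finite linear combination of $\cos 2r(\y_j,\cdot)$, whence $g\in W$. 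This gives $\tE_{1,0}=W$ and completes the proof.

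The only place that is not entirely routine is justifying that the functions $\psi_i$ may be taken continuous (so that pointwise evaluation of the duality basis makes sense, as in the first isometry) and that the identity $\sum\psi_i(\x)\psi_i(\y)=3\cos 2r(\x,\y)$ upgrades from $\tmu\otimes\tmu$-a.s.\ to a suitable set allowing the diagonal specialization; this will rely on the heat-kernel regularity already exploited in the first isometry argument together with the explicit form of $\tp_{1,0}$.
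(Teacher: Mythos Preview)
Your proposal is correct and follows essentially the same approach as the paper: orthonormal-basis expansion of the projector kernel $\tp_{1,0}$, diagonal specialization and integration against the probability measure $\tmu$ to get $\ell=3$, then the two inclusions $W\subset\tE_{1,0}$ (via \eqref{eq-eign-fun-B}) and $\tE_{1,0}\subset W$ (via $g=\tP_{1,0}g$). The paper simply asserts $g=\tP_{1,0}g\in V_2$ without further comment, whereas you supply an explicit finite-rank/dual-basis justification and flag the continuity and a.e.-to-diagonal issues; these are reasonable elaborations but do not alter the argument.
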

\begin{proof}
If we denote by $\{\psi_1,\dots, \psi_\iota\}$ an orthonormal basis of the eigenspace $\tE_{1,0}$ , then by \eqref{eq-tP-k0} and \eqref{eq-tp-10}  we have for any ${f}\in L^2(\B,\tilde{\mu})$ and $\x\in \B$ that
\[
\tP_{1,0} {f}(\x)=3\int_\B \cos 2r(\x,\y){f}(\y)d\tilde{\mu}(\y)=\sum_{i=1}^\iota\left( \int_\B \psi_i(\y){f}(\y)d\tilde{\mu}(\y)\right)\psi_i(\x).
\]
Thus for any $\x,\y\in \B$,
\[
\sum_{i=1}^\iota\psi_i(\x)\psi_i(\y)=3 \cos 2r(\x,\y).
\]
Let $\y=\x$ we then obtain that
\[
\sum_{i=1}^\iota\psi_i(\x)^2=3,\quad \forall \x\in \B.
\]
Integrate both sides of the above equation we have
\[
3=\int_\B \sum_{i=1}^\iota\psi_i(\x)^2 d\tilde{\mu}(\x)=\iota.
\]
Now let $V_2:=\operatorname{Span}\{\cos (2r(\x,\cdot)), \x\in \B \}$. Clearly it is a subspace of $\tE_{1,0}$. On the other hand, for any $g\in \tE_{1,0}$, we have
\[
g(\x)=\tP_{1,0}g(\x)=3\int_\B \cos 2r(\x,\y)g(\y)d\tilde{\mu}(\y)\in V_2.
\]
Hence we have that $V_2=\tE_{1,0}$.
\end{proof}
Next consider the space $\mathcal{D}_2:=\operatorname{Span}\{\delta_{\x}(\cdot), \x\in \B\}$. Clearly it is a subspace of the algebraic dual space $V_2^*$ of $V_2$.   Moreover, note that for any ${f}\in V_2$ such that $\eta({f})=0$ for all $\eta\in \mathcal{D}_2$ we have ${f}(\x)=0$ for all $\x\in \B$. This means that
\[
\mathcal{D}_2=V_2^{\ast}.
\]
\begin{lemma}
Let $\{\x_1,\x_2, \x_3\}\in \B$ be such that $\{\delta_{\x_1},\delta_{\x_2}, \delta_{\x_3}\}$ is a basis of $V_2^*$ dual to the basis $\{\psi_1,\psi_2, \psi_3\}$ in $V_2$.  Then for $\mu-$a.s. any $\x,\y\in\B$ we have
\begin{equation}\label{eq-V-2-metric}
\cos 2r(\x,\y)=\sum_{i,j=1}^3 \cos 2r(\x_i,\x_j)\psi_i(\x) \psi_j(\y).
\end{equation}
\end{lemma}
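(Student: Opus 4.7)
The plan is to mirror the argument used to prove \eqref{eq-V-1-metric} in the first isometry section: use the duality between the basis $\{\psi_1,\psi_2,\psi_3\}$ of $V_2$ and the basis $\{\delta_{\x_1},\delta_{\x_2},\delta_{\x_3}\}$ of $V_2^*$, together with the symmetry of $\cos 2r(\cdot,\cdot)$, to expand $\cos 2r(\x,\y)$ twice.

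First I would fix $\x\in\B$ and observe that $\cos 2r(\x,\cdot)$ belongs to $V_2=\tE_{1,0}$ by its very definition. Expanding in the basis $\{\psi_j\}$, the dual relation $\delta_{\x_j}(\psi_i)=\psi_i(\x_j)=\delta_{ij}$ forces the coefficient of $\psi_j$ to be the value of the function at $\x_j$, so for $\tmu$-a.s.\ $\x\in\B$,
\[
\cos 2r(\x,\cdot)=\sum_{j=1}^3 \delta_{\x_j}\bigl(\cos 2r(\x,\cdot)\bigr)\,\psi_j(\cdot)=\sum_{j=1}^3 \cos 2r(\x,\x_j)\,\psi_j(\cdot).
\]

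Next I would expand each coefficient $\cos 2r(\x,\x_j)$ using the same principle. By symmetry of $r$ we have $\cos 2r(\x,\x_j)=\cos 2r(\x_j,\x)$, and the function $\cos 2r(\x_j,\cdot)\in V_2$ admits the same kind of expansion. Evaluating at $\x$ gives
\[
\cos 2r(\x,\x_j)=\cos 2r(\x_j,\x)=\sum_{i=1}^3 \cos 2r(\x_j,\x_i)\,\psi_i(\x)=\sum_{i=1}^3 \cos 2r(\x_i,\x_j)\,\psi_i(\x),
\]
where I used symmetry once more in the last step. Substituting this identity into the previous display, then evaluating at $\y$, yields the required formula \eqref{eq-V-2-metric}.

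The argument is essentially formal once one has the basis and its dual, so no real obstacle is expected. The only subtle point is the ``$\mu$-a.s.'' qualifier, which arises because the two expansions above are first identities in $L^2(\B,\tmu)$; if one wanted them pointwise one would have to appeal to continuity of the eigenfunctions $\psi_1,\psi_2,\psi_3$ on $(\B,r)$, in the same spirit as in the proof of \eqref{eq-V-1-metric}. Everything else is a direct transcription of the first-isometry argument, with $V_1$, $E_{0,1}$ and $\cos\theta\cos r$ replaced by $V_2$, $\tE_{1,0}$ and $\cos 2r$.
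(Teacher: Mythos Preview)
Your proposal is correct and follows essentially the same approach as the paper: expand $\cos 2r(\x,\cdot)\in V_2$ in the basis $\{\psi_j\}$ via the dual basis to get coefficients $\cos 2r(\x,\x_j)$, then expand each coefficient once more using symmetry of $r$, and substitute. The paper's proof is identical in structure, only more terse.
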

\begin{proof}
For any $\x\in \B$, we can write
\begin{align}\label{eq-mid-cos}
\cos 2r(\x,\cdot)=\sum_{j=1}^3 \delta_{\x_j}\left(\cos 2r(\x,\cdot) \right) \psi_j(\cdot)
=\sum_{j=1}^3\cos 2r(\x,\x_j)\psi_j(\cdot).
\end{align}
Moreover, note that
\[
\cos 2r(\x,\x_j) =\cos 2r(\x_j,\x)=\sum_{i=1}^3 \cos 2r(\x_i,\x_j)\psi_i(\x),
\]
Plugging it into \eqref{eq-mid-cos} we obtain the conclusion.
\end{proof}

\begin{definition}
Clearly $\{\cos 2r(\x_i,\cdot)\}_{i=1}^3$ forms a basis of $V_2$.
We denote $b_{ij}:= \cos 2r(\x_i,\x_j)$ for all $1\leqslant i,j\leqslant 3$, it then defines a bilinear form $\beta_2$ on $\R^3\times\R^3$,  
\begin{align}\label{eq-beta2}
\beta_2 (\zeta,\zeta^{\prime})=\sum_{i,j=1}^3b_{ij}\zeta_i\zeta_j^{\prime},\quad \zeta=(\zeta_1,\dots, \zeta_3),\ \zeta^{\prime}=(\zeta^{\prime}_1,\dots, \zeta^{\prime}_3)\in\R^3.
\end{align}
Let $Q_2: \R^3\to \R$ be the associated quadratic form, i.e. for any $\zeta\in \R^3$,
\begin{align}\label{eq-Q2}
Q_2(\zeta)=\beta_2(\zeta,\zeta).
\end{align}
\end{definition}
We now define a map $\Psi: \B\to \R^3$:
\begin{align}\label{eq-Psi}
\Psi(\x):= (\psi_1(\x),\dots, \psi_3(\x)).
\end{align}
then from \eqref{eq-V-2-metric} we can write 
\[
\cos 2r(\x,\y)=\beta_2(\Psi(\x),\Psi(\y)).
\]
Particularly when $\x=\y$, we have $\beta_2(\Psi(\x),\Psi(\x))=Q_2(\Psi(\x))=1$. It implies that $\Psi(\x)\in \{\zeta\in\R^3, \beta_2(\zeta, \zeta)=1\}=: \mathcal{S}^2$. Hence $\Psi(\B)$ is a subset of $ \mathcal{S}^2$.

\begin{lemma}
Let $\Psi:\B\to\R^3$ be as given in \eqref{eq-Psi} and $Q_2$ the quadratic form on $\R^3$ as given in \eqref{eq-Q2}. Then $\Psi$ and $Q_2$ are both injective. Consequently we have $\mathrm{Ker}\, \beta_2=\{0\}$.
\end{lemma}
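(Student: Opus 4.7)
My plan is to mirror the proof of the analogous injectivity statement for the pair $(\Phi,Q)$ in Section~\ref{sec-isom-1}. The key computation exploits bilinearity of $\beta_2$: expanding gives
\[
Q_2(\Psi(\x) - \Psi(\y)) = \beta_2(\Psi(\x), \Psi(\x)) + \beta_2(\Psi(\y), \Psi(\y)) - 2\beta_2(\Psi(\x), \Psi(\y)).
\]
From \eqref{eq-V-2-metric} together with the definition \eqref{eq-beta2} one reads off $\beta_2(\Psi(\x), \Psi(\y)) = \cos 2r(\x, \y)$; in particular $\beta_2(\Psi(\x), \Psi(\x)) = 1$. Therefore
\[
Q_2(\Psi(\x) - \Psi(\y)) = 2 - 2\cos 2r(\x, \y) = 4\sin^2 r(\x, \y) \geq 0.
\]

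Injectivity of $\Psi$ is then immediate: $\Psi(\x) = \Psi(\y)$ forces $\sin r(\x,\y) = 0$, and since $r(\x,\y)\in [0,\pi/2)$ by Assumption~\ref{Assump1}, this yields $r(\x,\y) = 0$, so $\x = \y$ in the quotient $\B$. In particular $Q_2$ vanishes on the set of differences $\{\Psi(\x)-\Psi(\y) : \x,\y\in\B\}$ only at the zero vector.

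For the injectivity of $Q_2$ and the consequent non-degeneracy $\mathrm{Ker}\,\beta_2 = \{0\}$: since $\psi_1,\psi_2,\psi_3$ are linearly independent in $L^2(\B,\tmu)$, the image $\Psi(\B)$ spans $\R^3$ (any linear functional vanishing on $\Psi(\B)$ would produce a non-trivial linear relation $\sum_i a_i \psi_i \equiv 0$ on $\B$), and hence so does the difference set. Following Lemma~\ref{lemma-dim}, I would then run the $\beta_2$-orthogonal decomposition $\R^3 = E_+ \oplus E_- \oplus \mathrm{Ker}\,\beta_2$ and use the non-negativity $Q_2 \geq 0$ on the difference set together with the Hausdorff dimension bound $\dim_H \B \geq 2$ from Lemma~\ref{lemma-Hasud-dim-B} to force $E_-$ and $\mathrm{Ker}\,\beta_2$ to be trivial. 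The main obstacle is this signature analysis, as the purely algebraic injectivity of $\Psi$ is straightforward once the $4\sin^2 r(\x,\y)$ formula is in hand.
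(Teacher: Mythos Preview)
Your core computation --- expanding $Q_2(\Psi(\x)-\Psi(\y))$ via bilinearity to obtain $4\sin^2 r(\x,\y)$ and concluding injectivity of $\Psi$ --- is exactly the paper's argument, line for line. The paper's proof actually stops there; it does not spell out the ``consequently $\mathrm{Ker}\,\beta_2=\{0\}$'' step at all.

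Your third paragraph, however, overshoots. The decomposition $\R^3=E_+\oplus E_-$ together with the Hausdorff dimension bound from Lemma~\ref{lemma-Hasud-dim-B} is precisely the content of the \emph{next} lemma (Lemma~\ref{lemma-dim1}), whose conclusion is the stronger fact that $\beta_2$ is positive definite. For the present lemma you only need non-degeneracy, and there is a much cheaper route: by construction (equation~\eqref{eq-mid-cos}) the matrix $(b_{ij})=(\cos 2r(\x_i,\x_j))$ is the change-of-basis matrix from the basis $\{\psi_j\}$ of $V_2$ to the basis $\{\cos 2r(\x_i,\cdot)\}$, hence invertible, so $\mathrm{Ker}\,\beta_2=\{0\}$ immediately. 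Your observation that $\Psi(\B)$ spans $\R^3$ is correct and related in spirit, but you need not invoke the bi-Lipschitz/dimension machinery here.
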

\begin{proof} Using the bi-linearity of $\beta_2$ we have
\begin{align}\label{eq-Q-2-delta}
Q_2(\Psi(\x)-\Psi(\y))&=\beta_2(\Psi(\x),\Psi(\x))+\beta_2(\Psi(\y),\Psi(\y))-2\beta_2(\Psi(\x),\Psi(\y))\notag
\\
&=2-2\cos 2r(\x,\y)=4\sin^2 r(\x,\y).
\end{align}
If $\Psi(\x)=\Psi(\y)$ then $Q_2=0$, thus $r(\x,\y)=0$ which implies that $\x=\y$.
\end{proof}
With the above lemma we can then orthogonally decompose $\R^3$ by
\[
\R^3=E_{+}\oplus E_{-}
\]
where $E_{+}$ and $E_{-}$ are subspaces of $\R^3$ on which $\beta_2$ is positive and negative. 
\begin{definition}
Let $\mathrm{P}_{E_+}$ and $\mathrm{P}_{E_-}$ denote the projection maps from $\R^3$ to $E_+$ and $E_-$ respectively. Define 
\begin{align}\label{eq-psi}
\Psi_{+}=:\mathrm{P}_{E_{+}}\circ \Psi,\quad \Psi_{-}=:\mathrm{P}_{E_{-}}\circ \Psi
\end{align}
Denote by $Q_2^+$ and $Q_2^-$ the restrictions of the quadratic form $Q_2$ on $E_{+}$ and $E_{-}$, i.e., for any $u\in E_{+}$ and $v\in E_{-}$,
\[
Q_2^+(u):=Q_2(P_{E_+}(u)), \quad Q_2^-(v)=Q_2(P_{E_-}(v)).
\]
We define the induced distance on $E_+$ by:
\begin{equation}\label{eq-d_Q2+}
d_{Q_2^+}(u,v):=\sqrt{Q_2^+(u-v)},\quad u,v\in E_+.
\end{equation}
\end{definition}

\begin{lemma}\label{lemma-dim1}
Let $\Psi_+$ be as given in \eqref{eq-psi}. Then $\Psi_{+}$ is a bi-Lipschitz embedding of $(\B,r)$ into $(E_{+}, d_{Q_2^{+}})$. Moreover, we have $\dim E_+=3$, consequently $E_+=\R^3$, and $\beta_2$ given in \eqref{eq-beta2} is positive definite on $\R^3$.
\end{lemma}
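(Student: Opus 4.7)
The plan is to mirror the strategy used for the first isometry in Lemma~3.11, applied now to the projected space $(\B, r)$ with the bilinear form $\beta_2$ on $\R^3$. The key tools are the orthogonal decomposition $\R^3 = E_+ \oplus E_-$ and the identity \eqref{eq-Q-2-delta} relating $Q_2 \circ (\Psi(\x)-\Psi(\y))$ to the distance $r(\x,\y)$.

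First, I would establish the bi-Lipschitz estimate. From the orthogonal decomposition, for any $\x,\y \in \B$,
\[
Q_2(\Psi(\x)-\Psi(\y)) = Q_2^+(\Psi_+(\x)-\Psi_+(\y)) + Q_2^-(\Psi_-(\x)-\Psi_-(\y)).
\]
By \eqref{eq-Q-2-delta} the left-hand side equals $4\sin^2 r(\x,\y)$. Let $\lambda$ denote the largest absolute value of the eigenvalues of $\beta_2$; since $\beta_2$ is negative definite on $E_-$, one has $0 \leq -Q_2^-(\Psi_-(\x)-\Psi_-(\y)) \leq 4\lambda \sin^2 r(\x,\y)$. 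Combining these gives
\[
4\sin^2 r(\x,\y) \;\leq\; Q_2^+(\Psi_+(\x)-\Psi_+(\y)) \;\leq\; 4(1+\lambda)\sin^2 r(\x,\y),
\]
so $\Psi_+ : (\B, r) \to (E_+, d_{Q_2^+})$ is bi-Lipschitz onto its image.

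Next I would pin down $\dim E_+$ by a dimension-counting argument. Since $\beta_2(\Psi(\x),\Psi(\x)) = Q_2(\Psi(\x)) = 1$, the image $\Psi(\B)$ lies on the hypersurface $\{\zeta \in \R^3 : Q_2(\zeta) = 1\}$, and projecting via $\mathrm{P}_{E_+}$ puts $\Psi_+(\B)$ inside a hypersurface of $E_+$. Therefore $\dim E_+$ must strictly exceed the local Hausdorff dimension of the bi-Lipschitz image, which equals that of $(\B, r)$. By Lemma~\ref{lemma-Hasud-dim-B} this Hausdorff dimension is at least $2$, so $\dim E_+ > 2$. Since $\dim E_+ \leq 3$, we conclude $\dim E_+ = 3$, hence $E_- = \{0\}$, $E_+ = \R^3$, and $\beta_2$ is positive definite on $\R^3$.

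The only subtle point, analogous to the corresponding step in Lemma~\ref{lemma-dim}, is justifying that the Hausdorff-dimension lower bound survives bi-Lipschitz embedding into a hypersurface and forces the strict inequality $\dim E_+ > 2$. This relies on the fact that a subset of a $d$-dimensional linear subspace lying inside a (smooth, nontrivial) hypersurface has Hausdorff dimension at most $d-1$; applied to $\Psi_+(\B) \subset E_+$ with $\dim_H \Psi_+(\B) \geq 2$, this forces $\dim E_+ \geq 3$. Everything else is a direct transcription of the argument from the $\R^4$ case.
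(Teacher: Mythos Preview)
Your proposal is correct and follows essentially the same approach as the paper: the orthogonal decomposition together with \eqref{eq-Q-2-delta} gives the two-sided estimate $4\sin^2 r \leqslant Q_2^+(\Psi_+(\x)-\Psi_+(\y)) \leqslant 4(1+\lambda)\sin^2 r$, and then the hypersurface constraint combined with Lemma~\ref{lemma-Hasud-dim-B} forces $\dim E_+ \geqslant \dim \B + 1 \geqslant 3$. Your identification of the hypersurface step as the only subtle point matches exactly what the paper leaves implicit.
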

\begin{proof}
For any $\x,\y\in \B$, we have
\[
Q_2(\Psi(\x)-\Psi(\y))=Q_2^+(\Psi_{+}(\x)-\Psi_{+}(\y))+Q_2^-(\Psi_{-}(\x)-\Psi_{-}(\y)).
\]
Let $\lambda$ be the largest modulus of an eigenvalue of $\beta_2$, we know that
\[
0\leqslant -Q_2^-(\Psi_{-}(\x)-\Psi_{-}(\y))\leqslant \lambda \cdot 4\sin^2 r(\x,\y).
\]
Hence
\[
4\sin^2 r(\x,\y)\leqslant Q_2^+(\Psi_{+}(\x)-\Psi_{+}(\y))\leqslant 4(1+\lambda)\sin^2r(\x,\y).
\]
This then implies that $\Psi_{+}$ is a bi-Lipschitz embedding of $(\B,r)$ into a hypersurface $\cS^2$ inside $(E_{+}, d_{Q_2^+})$, hence 
\[
\dim E_+\geqslant \dim \B+1.
\]
The conclusion then follows from Lemma \ref{lemma-Hasud-dim-B}.
\end{proof}

Define the sphere in $(\R^3,\beta_2)$ and a length distance on it. Let 
\begin{equation}\label{eq-d_Q+delta2}
\cS^2:=\{u\in \R^4, Q_2^+(\zeta)=1\}, \quad d_{\cS^2}(u,v)=2\arcsin\left( \frac{d_ {Q_2^{+}}(u,v)}{2}\right),\quad u,v\in \cS^2.
\end{equation}
We are now ready to show the isometry between $(\B,r)$ and $(\mathcal{S}^2, \frac12d_{\cS^2})$ under $\Psi$, by using similar argument as previously.

\begin{proposition}
The map $\Psi$ as given in \eqref{eq-Psi} is an isometric embedding of $(\B,r)$ into $(\mathcal{S}^2,d_{\cS^2})$.
\end{proposition}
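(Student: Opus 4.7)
The plan is to prove this isometric embedding in almost perfect parallel with the proof of the first isometry, now using the explicit computation of $Q_2$ on $\Psi(\x)-\Psi(\y)$ established right before the proposition.

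First I would record that $\Psi(\B)\subset\cS^2$, which is already observed: $\beta_2(\Psi(\x),\Psi(\x))=\cos 2r(\x,\x)=1$. By Lemma~\ref{lemma-dim1} we have $E_+=\R^3$ and $\beta_2$ is positive definite, so $Q_2=Q_2^+$ on all of $\R^3$ and the quantity $d_{Q_2^+}$ is a genuine Euclidean-type distance. The key computation, \eqref{eq-Q-2-delta}, gives
\[
d_{Q_2^+}\bigl(\Psi(\x),\Psi(\y)\bigr)^2 = Q_2\bigl(\Psi(\x)-\Psi(\y)\bigr) = 4\sin^2 r(\x,\y).
\]
Since $r(\x,\y)\in[0,\pi/2)$ by Assumption~\ref{Assump1}, we may take the positive square root to obtain $d_{Q_2^+}(\Psi(\x),\Psi(\y)) = 2\sin r(\x,\y)$.

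Plugging into the definition \eqref{eq-d_Q+delta2} of the spherical distance, and using again that $r(\x,\y)\in[0,\pi/2)$ so that $\arcsin$ is a left inverse of $\sin$ on this range,
\[
d_{\cS^2}\bigl(\Psi(\x),\Psi(\y)\bigr) = 2\arcsin\bigl(\sin r(\x,\y)\bigr) = 2\,r(\x,\y).
\]
Thus $\Psi:(\B,r)\to(\cS^2,\tfrac12 d_{\cS^2})$ is an isometric embedding (this matches the description preceding the proposition, corresponding to identifying $\cS^2$ with the round sphere of radius $\tfrac12$, i.e.\ $\mathbb{CP}^1$ with its Fubini–Study metric). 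Injectivity of $\Psi$ was already established, so this gives the desired embedding.

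To upgrade to surjectivity onto $\cS^2$, I would mimic the argument used for $\Phi$. Proposition~\ref{prop-lengthy-B} says $(\B,r,\tmu)$ is a geodesic space, so $\Psi(\B)$ is closed and totally geodesic inside the round sphere $\cS^2$. If $\Psi(\B)$ were contained in a hemisphere around some $\xi_0\in\cS^2\setminus\Psi(\B)$, then the function $\x\mapsto\beta_2(\xi_0,\Psi(\x))$, which lies in $V_2=\tE_{1,0}$, would be of constant sign; but $\tE_{1,0}$ is the eigenspace for $\tlam_{1,0}\neq 0$, hence orthogonal to constants in $L^2(\B,\tmu)$, forcing the function to vanish $\tmu$-a.e. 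This would confine $\Psi(\B)$ to a great circle, contradicting $\dim\B\geqslant 2$ from Lemma~\ref{lemma-Hasud-dim-B}. Then, picking a closest point $\xi_1\in\Psi(\B)$ to any omitted $\xi_0$ and following geodesics through $\xi_1$ yields a hemisphere disjoint from $\Psi(\B)$, the same contradiction.

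The only delicate step I anticipate is this last hemisphere argument, precisely because the target here is $2$-dimensional so the sub-dimension bookkeeping (``$\Psi(\B)$ is not in a great circle'') must be done using $\dim\B\geqslant 2$ rather than $\geqslant 3$ as in Section~\ref{sec-isom-1}; otherwise every ingredient (eigenspace structure, the explicit spherical identity, completeness/geodesic character of $(\B,r,\tmu)$) has already been assembled in the preceding lemmas.
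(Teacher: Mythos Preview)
Your proposal is correct and follows essentially the same approach as the paper: you derive $d_{\cS^2}(\Psi(\x),\Psi(\y))=2r(\x,\y)$ from \eqref{eq-Q-2-delta} and \eqref{eq-d_Q+delta2} to get the isometric embedding, and then run the identical hemisphere/eigenspace argument (using $\tE_{1,0}\perp$ constants and $\dim\B\geqslant 2$) for surjectivity. The only cosmetic difference is that the paper phrases the last geodesic step slightly more explicitly, but the logic is the same.
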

\begin{proof}
First combining \eqref{eq-Q-2-delta}, \eqref{eq-d_Q2+} and \eqref{eq-d_Q+delta2} we have that
\begin{equation}\label{eq-def-d-beta-2}
d_{\cS^2}(\Psi(\x),\Psi(\y))=2r(\x,\y),
\end{equation}
for all $\x,\y\in \B$. Hence $\Psi$ is an isometric embedding of $(\B,r)$ into $(\cS^2,\frac12d_{\cS^2})$. We are left to show that $\Psi$ is onto. We shall use similar argument as in \cite{CarronTewodrose2022}. Assume that $\Psi$ is not onto, then we can find a point $\zeta_0\in \cS^2\setminus \Psi(\B)$. Let  $2r_0=d_{\cS^2}(\zeta_0, \Psi(\B))$. 

We  claim that $\Psi(\B)$ is contained in no hemisphere.
To show the claim, it suffice to show that $2r_0<\frac{\pi}2$. Assume that $2r_0\geqslant \frac{\pi}2$, then $\Psi(\B)$ is contained outside the hemisphere centered at $\zeta_0$, which implies that $\tilde{H}(\x):=\beta_2(\zeta_0, \Psi(\x))\le0$ for all $\x\in \B$. By the definition of $\Psi$ we know that $\tilde{H}$ is a linear combination of $\psi_1(\x),\dots,\psi_{3}(\x)$. Hence $\tilde{H}\in {V}_2=\tE_{1,0}$. Recall that $\tE_{1,0}$ as in \eqref{eq-tE01} is orthogonal to constant functions in $L^2(\tmu)$, hence we obtain that 
\[
\int_\B \tilde{H}(\x)d\tmu(\x)=0.
\] 
This implies that $\tilde{H}(\x)=0$ for $\tmu$-a.s.$\x$ in $\B$, namely $\Psi(\B)\subset \{\zeta\in \cS^2, 2d_{\cS^2}(\zeta_0, \zeta)=\frac{\pi}{2}\}$. It contradicts with the fact that $\dim \Psi(\B)\geqslant 2$. Hence we obtain that $2r_0<\frac{\pi}2$.

From Proposition \ref{prop-lengthy-B} we know that $(\B, r, \tmu)$ is a geodesic space, hence  the image $\Psi(\B)$ is a closed and totally geodesic subset of $\cS^2$.
By closeness of $\Psi(\B)$ we know that $r_0>0$ and there exists a $\zeta_1$ such that $d_{\cS^2}(\zeta_0,\zeta_1)=2r_0$.

Note that  $\Psi$ maps geodesics to geodesics hence all minimizing paths in $\cS^2$ that joining two points in $\Psi(\B)$ are contained in $\Psi(\B)$.  
This implies that any minimizing geodesics in $(\cS^2,\frac12 d_{\cS^2})$ started from $\zeta_1$ that are of length less than $\frac\pi2$ and pass through the open ball $B_{\frac12d_{\cS^2}}(\zeta_0,r_0)$ can not touch $\Psi(\B)$. However, note that the union of such minimizing geodesics form an open hemisphere which is contained in $\cS^2\setminus \Psi(\B)$. This contradicts with the claim. Hence we can conclude that $\Psi$ is onto.
The proof is then completed.
 \end{proof}

\section{Riemannian submersion and bundle isometry}\label{sec-Riem-sub}  
Recall the diagram \eqref{eq-diagram}. We have already shown in previous sections the two isometries $\Phi: (M,\delta)\to (\cS^3, d_{\cS^3}) $ and $\Psi: (\B,r)\to (\cS^2,\frac12d_{\cS^2})$. Let $\pi:(M,\delta)\to (\B,r)$ be as given in \eqref{eq-pi}.

In this section we show that  $\Pi:=\Psi\circ\pi\circ \Phi^{-1}$ is a Riemannian submersion from $(\cS^3,d_{\cS^3})$ to $(\cS^2,\frac12d_{\cS^2})$ with totally geodesic fibers, which then implies the desired bundle isometry. 

The term bundle isometry was introduced in \cite{Escobales1975} in the context of connecting two equivalent Riemannian submersions. In present paper, we extend this concept for a more general situation.
\begin{definition}
Let $\Pi: M\to B$ and $\pi:N\to P$ be Riemannian submersions with totally geodesic fibers. Let $\Phi: M\to N$ and $\Psi: B\to P$ be two isometries, and the following diagram is commutative:
\[
\begin{tikzcd}
 M\arrow[r, "\Phi"] \arrow[d, "\pi"]
    &  N \arrow[d, "\Pi"] \\
  B \arrow[r, "\Psi"]
&  P 
\end{tikzcd}
\]
Then the pair $(\Phi,\Psi)$ is called bundle isometry between $\Pi$ and $\pi$.
\end{definition}
\begin{theorem}
Let $\Pi=\Psi\circ\pi\circ \Phi^{-1}$. It is a  Riemannian submersion from $(\cS^3,d_{\cS^3})$ to $(\cS^2,\frac12d_{\cS^2})$. Consequently $(\Phi,\Psi)$ is a bundle isometry between $\Pi$ and $\pi$.
\end{theorem}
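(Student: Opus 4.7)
The plan is to establish that $\Pi := \Psi \circ \pi \circ \Phi^{-1}$ is a submetry from $(\cS^3, d_{\cS^3})$ to $(\cS^2, \tfrac12 d_{\cS^2})$ whose fibers are totally geodesic great circles, and then to invoke Escobales's characterization \cite{Escobales1975} to promote it to a genuine smooth Riemannian submersion agreeing with the Hopf fibration. Once this is done, the commutativity of the diagram \eqref{eq-diagram} (which holds by the very definition of $\Pi$, since $\Pi \circ \Phi = \Psi \circ \pi$) delivers the bundle isometry.

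I would first analyze the fibers of $\Pi$. For each $\x \in \B$, the preimage $\Pi^{-1}(\Psi(\x)) = \Phi(\pi^{-1}(\x)) = \Phi(\x)$, so it suffices to describe the orbit $\x \subset (M, \delta)$. On $\x$ one has $r \equiv 0$, so \eqref{e.delta} collapses to $\delta(x_1, x_2) = \arccos(\cos\theta(x_1, x_2)) = \theta(x_1, x_2)$, i.e.\ $\delta|_{\x} = \theta|_{\x}$. Using completeness of $M$ under $\theta$ (Assumption~\ref{Assump1}) and the closedness of $\x$ in $(M, \delta)$ established in Lemma~\ref{lemma-orbit-disj}, together with the length-space argument of Proposition~\ref{prop-lengthy} restricted to $\x$, I would show that $(\x, \delta|_{\x})$ is a complete geodesic metric space isometric to a round circle of length $2\pi$. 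Through $\Phi$ this becomes an isometric copy of $S^1(1)$ inside $\cS^3$, and an isometrically embedded closed geodesic of the ambient sphere is necessarily a great circle, hence totally geodesic.

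Next, I would verify the submetry property. For $x \in \x$, $y \in \y$, every $x' \in \x,\ y' \in \y$ satisfies $r(x', y') = r(x, y)$, so
\[
\inf_{x' \in \x,\, y' \in \y} \delta(x', y') \;=\; \arccos\!\Bigl(\cos r(x, y)\cdot \sup_{x', y'} \cos \theta(x', y')\Bigr) \;=\; r(\x, \y),
\]
provided $\inf_{x', y'} \theta(x', y') = 0$. The latter follows because within each orbit, now known to be a circle, the function $\theta(\cdot, y')$ attains every value in $[0, \pi)$ continuously, so by compactness of the orbit the infimum is attained at $0$. Transported through the isometries $\Phi$ and $\Psi$ (invoking \eqref{eq-def-d-beta-1} and \eqref{eq-def-d-beta-2}), this yields the defining submetry identity
\[
\tfrac12 d_{\cS^2}(\Pi(u), \Pi(v)) \;=\; \inf\{\, d_{\cS^3}(u', v') : \Pi(u') = \Pi(u),\ \Pi(v') = \Pi(v)\,\}.
\]

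Finally, by Escobales's rigidity theorem \cite{Escobales1975}, any submetry between the round $\cS^3$ and $\cS^2(\tfrac12)$ with totally geodesic closed-geodesic fibers must be smooth and coincide, up to ambient isometry, with the Hopf fibration $\operatorname{U}(1) \to \operatorname{SU}(2) \to \mathbb{CP}^1$; in particular, it is a Riemannian submersion. Applying this to our $\Pi$, together with the tautological commutativity $\Pi \circ \Phi = \Psi \circ \pi$, exhibits $(\Phi, \Psi)$ as a bundle isometry between $\pi$ and the Hopf projection $\Pi$. The principal obstacle is the fiber analysis, specifically upgrading the mere isometric embedding $(\x, \delta|_{\x}) \hookrightarrow \cS^3$ to the statement that $\Phi(\x)$ is a closed great circle of length $2\pi$; this forces careful use of the completeness of $(\x, \theta)$, the length-space structure inherited from Proposition~\ref{prop-lengthy}, and the constraint $\theta \in [0, \pi)$ to rule out incomplete or degenerate orbits.
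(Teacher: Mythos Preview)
Your strategy---fibers are great circles, then submetry, then Escobales---is a genuinely different route from the paper's, but it has two real gaps.

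The more serious one is the claim $\inf_{x'\in\x,\,y'\in\y}\theta(x',y')=0$. You argue that on the circle $\x$ the map $x'\mapsto\theta(x',y')$ ``attains every value in $[0,\pi)$,'' but the pseudo-metric axioms only give that this map is $1$-Lipschitz with respect to $\theta|_{\x}=\delta|_{\x}$; nothing excludes it from being a positive constant. What actually constrains the distribution of $\theta(x,\cdot)$ on a foreign fiber is analytic input from the heat kernel---specifically Lemma~\ref{lemma-Theta-delta}, which forces $\int_{\y}\cos(n\theta(x,y))\,d\mu_{\y}(y)=0$ for all $n\ge1$---and you do not invoke it. Even with that lemma in hand, passing to the pointwise statement $\inf\theta=0$ still requires an argument about the support of $\mu_{\y}$. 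A smaller but related issue: the ``length-space argument of Proposition~\ref{prop-lengthy} restricted to $\x$'' does not work as stated, since that proposition relies on the heat kernel $q_t$ on all of $M$ and does not automatically restrict to a closed subset.

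The second gap is the appeal to \cite{Escobales1975}. Escobales classifies \emph{smooth Riemannian submersions} from round spheres with totally geodesic fibers; his theorem does not assert that a metric submetry between $\cS^3$ and $\cS^2(\tfrac12)$ is smooth. To upgrade a submetry to a Riemannian submersion one needs a separate regularity result (of Berestovskii--Guijarro type), which is not what you cite. As written, the final step assumes the smoothness it is meant to prove.

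For comparison, the paper avoids both issues by arguing infinitesimally rather than via a global submetry identity. It first shows, using only the triangle inequality for $\theta$, that any $\cS^3$-geodesic joining two points of a common fiber remains in that fiber (this is close in spirit to your Step~1 and in fact supplies the missing ``length-space on $\x$'' ingredient). It then pushes $\theta$ forward to $\tth:=\theta\circ(\Phi^{-1}\times\Phi^{-1})$ on $\cS^3$ and shows $\nabla\tth_u(u)\parallel Z_u$, using that $\tth=d_{\cS^3}$ along fibers together with $\tth\le d_{\cS^3}$ globally. Consequently $\tth$ vanishes along every horizontal curve $\gamma$, whence $d_{\cS^3}(\gamma(s),\gamma(s'))=r=\tfrac12 d_{\cS^2}(\Pi\gamma(s),\Pi\gamma(s'))$ for nearby $s,s'$: this is exactly the Riemannian-submersion property, obtained with no external classification.
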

\begin{proof}
Step 1: 

First we induce a fibration structure ${\Gamma}$ on $(\cS^3,d_{\cS^3})$ by using the isometry $\Phi$. We define equivalent classes on $\cS^3$ setting $u\sim v$ if $\Phi^{-1}(u)\sim \Phi(v)^{-1}$ in $M$, namely $r(\Phi^{-1}(u), \Phi^{-1}(v))=0$. 

Since 
\begin{align*}
r(\Phi^{-1}(u), \Phi^{-1}(v))&={r}(\pi\circ\Phi^{-1}(u), \pi\circ\Phi^{-1}(v))\\
&=\frac12d_{\cS^2}(\Psi\circ\pi\circ\Phi^{-1}(u),\Psi\circ \pi\circ\Phi^{-1}(v))=\frac12d_{\cS^2}(\Pi(u), \Pi(v))
\end{align*}
the equivalent relation on $\cS^3$ therefore coincides with $u\sim v$ iff $d_{\cS^2}(\Pi(u), \Pi(v))=0$.

Step 2: We show that the fibers are $1$-dimensional totally geodesic fibers.

Recall the definition of \emph{totally geodesic}: a submanifold $N$ of a Riemannian manifold $M$ is totally geodesic if any geodesic on $N$ with the induced Riemannian metric is a geodesic on $M$. Clearly any geodesic on $(\cS^3,d_{\cS^3})$ can be viewed as a totally geodesic submanifold of dimension $1$. Let $\mathbf{u}$ be a fiber in $\cS^3$ obtained as in step  1. It is enough to show that for any $u,v\in \mathbf{u}$, the (arc-length parametrized) geodesic path $\gamma[0,d_{\cS^3}(u,v)]$ connecting $u$ and $v$ is in fact in $\mathbf{u}$. We prove it in the lemma below.

\begin{lemma}
For any $u,v$ in the same fiber $\mathbf{u}\subset \cS^3$, let $\gamma[0, d_{\cS^3}(u,v)]$ be the arc-length parametrized geodesic path that connects $u$ and $v$, i.e. $\gamma(0)=u$, $\gamma(d_{\cS^3}(u,v))=v$ and $|\dot{\gamma}(t)|=1$ for all $t$. Then we have that $\gamma\subset \mathbf{u}$. 
\end{lemma}
\begin{proof}
By the definition of a geodesic 
\[
d_{\cS^3}(u,v)=\sup\sum_{i=0}^{n-1}d_{\cS^{3}}(\gamma(s_i),\gamma(s_{i+1}))
\]
where the supremum is taken over all $n\in \mathbb{Z}_+$ and all partitions $0= s_0\leqslant s_1\leqslant \cdots \leqslant s_n=d_{\cS^3}(u,v)$.  By the definition of $\delta$ (see \eqref{eq-def-d-beta-1}) we have for each $i$ that
\begin{align*}
d_{\cS^3}(\gamma(s_i),\gamma(s_{i+1}))&=\delta(\Phi^{-1}(\gamma(s_i)), \Phi^{-1}(\gamma(s_{i+1})))\\
&=\arccos\bigg(\cos r\big(\Phi^{-1}(\gamma(s_i)),\Phi^{-1}\gamma(s_{i+1}))\big)\cos\theta\big(\Phi^{-1}(\gamma(s_i)),\Phi^{-1}(\gamma(s_{i+1}))\big)\bigg)\\
&\geqslant \theta\big(\Phi^{-1}(\gamma(s_i)),\Phi^{-1}(\gamma(s_{i+1}))\big)
\end{align*}
Meanwhile for $u\sim v\in \cS^3$,
\[
d_{\cS^3}(u,v)=\delta(\Phi^{-1}(u), \Phi^{-1}(v))=\theta(\Phi^{-1}(u), \Phi^{-1}(v))
\]
Hence we have that
\[
\theta(\Phi^{-1}(u), \Phi^{-1}(v))\geqslant \sum_{i=0}^{n-1} \theta\big(\Phi^{-1}(\gamma(s_i)),\Phi^{-1}(\gamma(s_{i+1}))\big)\geqslant \theta(\Phi^{-1}(u), \Phi^{-1}(v))
\]
The last inequality follows from the triangle inequality that is satisfied by the semi-metric $\theta$. Hence we obtain that
\[
r\big(\Phi^{-1}(\gamma(s_i)),\Phi^{-1}(\gamma(s_{i+1}))\big)=0
\]
for all $1\leqslant i\leqslant n$ in any partition $0= s_0\leqslant s_1\leqslant \cdots \leqslant s_n=d_{\cS^3}(u,v)$, which implies that $\gamma\in \mathbf{u}$. 
\end{proof}

Step 3: At last we show that $\Pi: (\cS^3,d_{\cS^3})\to (\cS^2, \frac12d_{\cS^2})$ is a Riemannian submersion. Recall that the definition of Riemannian submersion $\Pi:X\to Y$ is such that $d\Pi: (\ker(d\Pi))^{\perp}\to T(Y)$ is an isometry. We call a path $\gamma(t), t\in[0,1]$ \textbf{horizontal} if  $\dot{\gamma}(t)\perp Z_{\gamma(t)}$ for a.e. $t\in[0,1]$, where $Z$ denotes the fiber direction. 
It suffices to show that the length of a horizontal path is preserved by $\Pi$, i.e., for any $t\in[0,1]$,
\[
\ell_{\cS^3}( \gamma[0,t]))=\frac1{2}\ell_{\cS^2}(\Pi\circ\gamma[0,t]).
\]
Recall that by definition we know that
\[
\ell_{\cS^3}(\gamma[0,t])=\sup\sum_{i=0}^{n-1}d_{\cS^3}(\gamma(s_i),\gamma(s_{i+1}))
\]
where the supremum is taken over all $n\in \mathbb{Z}_+$ and all partitions $0= s_0\leqslant s_1\leqslant \cdots \leqslant s_n=1$. It suffices to show that
\[
d_{\cS^3}(\gamma(s_i), \gamma(s_{i+1}))=\frac1{2}d_{\cS^2}(\Pi\circ\gamma(s_i),\Pi\circ  \gamma(s_{i+1})).
\]
for all $s_i, s_{i+1}$ is the same sufficiently small neighborhood. From \eqref{eq-def-d-beta-1} and \eqref{eq-def-d-beta-2} we know that it suffices to show that
\begin{align*}
&\arccos (\cos r \left(\Phi^{-1}(\gamma(s)),\Phi^{-1}(\gamma(s')) \right)\cos\theta\left(\Phi^{-1}(\gamma(s)),\Phi^{-1}(\gamma(s'))\right)\\
&=r(\Psi^{-1}\circ\Pi(\gamma(s)),\Psi^{-1}\circ\Pi(\gamma(s')))
\end{align*}
for any $s,s'\in[0,1]$ in a small enough neighborhood.  Note that 
\begin{align*}
r(\Psi^{-1}\circ\Pi(\gamma(s)),\Psi^{-1}\circ\Pi(\gamma(s')))&=r(\pi\circ\Phi^{-1}(\gamma(s)),\pi\circ\Phi^{-1}(\gamma(s')))\\
&=r(\Phi^{-1}(\gamma(s)),\Phi^{-1}(\gamma(s')))
\end{align*}
therefore it suffices to show that 
\begin{align}\label{eq-th-ga}
\theta(\Phi^{-1}(\gamma(s)),\Phi^{-1}(\gamma(s')))=0
\end{align}
for any horizontal path $\gamma$ and any $s,s'\in[0,1]$ in a small enough neighborhood.

We denote by $\tth(\cdot,\cdot):=\theta(\Phi^{-1}(\cdot), \Phi^{-1}(\cdot))$ the push-ward of $\theta$ on $\cS^3$. It can be easily verified that $\tth$ is a semi-metric on $\cS^3$. In particular, when restricted on fibers, $\tth$ coincides with $d_{\cS^3}$. To see this, let $\mathbf{u}\subset \cS^3$ be the fiber that contains $u$, and pick a point $v\in \mathbf{u}$ that is a neighborhood of $u$. Let $\sigma[0,1]$ be a geodesic path that connects $u$ and $v$, i.e. $\sigma(0)=u$, $\sigma(1)=v$. From \eqref{eq-def-d-beta-1} we know that for all $0\leqslant s\leqslant t\le1$,
\begin{align}\label{eq-ds-tth}
d_{\cS^3}(\sigma(s), \sigma(t))=\theta(\Phi^{-1}(\sigma(s)), \Phi^{-1}(\sigma(t))= \tth(\sigma(s),\sigma(t)).
\end{align}
To prove \eqref{eq-th-ga}, we introduce the notation $\tth_{u}(\cdot):= \tth(u,\cdot)$. Then since
\[
\tth(\gamma(s),\gamma(s'))=\int_s^{s'} \langle\nabla \tth_{\gamma(t)}(\gamma(t)),\dot{\gamma}(t)\rangle_{\beta_1}dt
\]
 it suffices to show that for all $s\leqslant t \leqslant s'$,
\[
\nabla\tth_{\gamma(t)}(\gamma(t))\perp \dot{\gamma}(t).
\]
Taking into the account that $\gamma$ is a horizontal path, which implies that $ \dot{\gamma}(t)\perp Z_{\gamma(t)}$. Therefore it suffices to show that
\begin{equation}\label{eq-desired-parelle}
\nabla\tth_{u}(u) \ ||\ Z_{u},\quad \mbox{for all }u\in \cS^{3}.
\end{equation}
To show this, let $\mathbf{u}\subset \cS^3$ be the fiber that contains $u$, and pick a point $v\in \mathbf{u}$ that is a neighborhood of $u$. Let $\sigma[0,1]$ be a geodesic path that connects $u$ and $v$, i.e. $\sigma(0)=u$, $\sigma(1)=v$. From \eqref{eq-ds-tth} we know that
\[
\int_0^t|\dot{\sigma}(s)|_{\beta_1}ds= \int_0^t \langle\nabla \tth_{\sigma(s)}(\sigma(s)),\dot{\sigma}(s)\rangle_{\beta_1}ds.
\]
Taking derivative at $t=0$ on both sides of the above expression we then obtain that
\begin{align*}
|\dot{\sigma}(0)|_{\beta_1}=\langle\nabla\tth_{\sigma(0)}(\sigma(0)), \dot{\sigma}(0)\rangle_{\beta_1},
\end{align*}
which implies that $\dot{\sigma}(0)\ ||\ \nabla\tth_{\sigma(0)}(\sigma(0))$, which is  \eqref{eq-desired-parelle}. 

Lastly we can easily observe that $M\to \B$ is a submersion with totally geodesic fibers by using similar argument as in Step 2. We then obtain the desired conclusion. 
\end{proof}

\begin{acknowledgement}
The authors thank David Tewodrose and Gilles Carron for the discussion concerning the spectrum of the operator $L$ including the argument used to prove Proposition~\ref{p.HeatKernel}. This can be applied to the spherical case in \cite{CarronTewodrose2022} as well. 
\end{acknowledgement}

\bibliographystyle{amsplain}

\begin{thebibliography}{10}

\bibitem{BaudoinBonnefont2009}
Fabrice Baudoin and Michel Bonnefont, \emph{The subelliptic heat kernel on
  {${\rm SU}(2)$}: representations, asymptotics and gradient bounds}, Math. Z.
  \textbf{263} (2009), no.~3, 647--672. \MR{2545862 (2011d:58060)}

\bibitem{BelkinNiyogi2001}
Mikhail Belkin and Partha Niyogi, \emph{Laplacian eigenmaps and spectral
  techniques for embedding and clustering}, Advances in neural information
  processing systems \textbf{14} (2001), 585 --591.

\bibitem{BronsteinABronsteinMimmelMahmoudiSapiro}
Alexander~M Bronstein, Michael~M Bronstein, Ron Kimmel, Mona Mahmoudi, and
  Guillermo Sapiro, \emph{A gromov-hausdorff framework with diffusion geometry
  for topologically-robust non-rigid shape matching}, International Journal of
  Computer Vision \textbf{89} (2010), no.~2, 266--286.

\bibitem{BBI01}
Dmitri Burago, Yuri Burago, Sergei Ivanov, et~al., \emph{A course in metric
  geometry}, vol.~33, American Mathematical Society Providence, 2001.

\bibitem{CarronTewodrose2022}
Gilles Carron and David Tewodrose, \emph{A rigidity result for metric measure
  spaces with {E}uclidean heat kernel}, J. \'{E}c. polytech. Math. \textbf{9}
  (2022), 101--154. \MR{4344324}

\bibitem{ChenFukushimaBook2012}
Zhen-Qing Chen and Masatoshi Fukushima, \emph{Symmetric {M}arkov processes,
  time change, and boundary theory}, London Mathematical Society Monographs
  Series, vol.~35, Princeton University Press, Princeton, NJ, 2012. \MR{2849840
  (2012m:60176)}

\bibitem{CoifmanLafon2006}
Ronald~R Coifman and St{\'e}phane Lafon, \emph{Diffusion maps}, Applied and
  computational harmonic analysis \textbf{21} (2006), no.~1, 5--30.

\bibitem{colding1997}
Tobias~H Colding, \emph{Ricci curvature and volume convergence}, Annals of
  mathematics \textbf{145} (1997), no.~3, 477--501.

\bibitem{De_GoesGoldensteinVelho2008}
Fernando De~Goes, Siome Goldenstein, and Luiz Velho, \emph{A hierarchical
  segmentation of articulated bodies}, Computer graphics forum, vol.~27, Wiley
  Online Library, 2008, pp.~1349--1356.

\bibitem{Escobales1975}
Richard~H. Escobales, Jr., \emph{Riemannian submersions with totally geodesic
  fibers}, J. Differential Geometry \textbf{10} (1975), 253--276. \MR{370423}

\bibitem{FukushimaOshimaTakedaBook2011}
Masatoshi Fukushima, Yoichi Oshima, and Masayoshi Takeda, \emph{Dirichlet forms
  and symmetric {M}arkov processes}, extended ed., De Gruyter Studies in
  Mathematics, vol.~19, Walter de Gruyter \& Co., Berlin, 2011. \MR{2778606}

\bibitem{Grigor'yan2010a}
Alexander Grigor'yan, \emph{Heat kernels on metric measure spaces with regular
  volume growth}, Handbook of geometric analysis, {N}o. 2, Adv. Lect. Math.
  (ALM), vol.~13, Int. Press, Somerville, MA, 2010, pp.~1--60. \MR{2743439}

\bibitem{grigor2008dichotomy}
Alexander Grigor?yan and Takashi Kumagai, \emph{On the dichotomy in the heat
  kernel two sided estimates}, Analysis on Graphs and its Applications (P.
  Exner et al.(eds.)), Proc. of Symposia in Pure Math, vol.~77, 2008,
  pp.~199--210.

\bibitem{Hormander1967a}
Lars H{\"o}rmander, \emph{Hypoelliptic second order differential equations},
  Acta Math. \textbf{119} (1967), 147--171. \MR{0222474 (36 \#5526)}

\bibitem{Kac1966}
Mark Kac, \emph{Can one hear the shape of a drum?}, Amer. Math. Monthly
  \textbf{73} (1966), no.~4, part II, 1--23. \MR{0201237}

\bibitem{Kazukawa2022a}
Daisuke Kazukawa, \emph{Convergence of energy functionals and stability of
  lower bounds of {R}icci curvature via metric measure foliation}, Comm. Anal.
  Geom. \textbf{30} (2022), no.~6, 1301--1354. \MR{4583438}

\bibitem{NowakSjogren2013}
Adam Nowak and Peter Sj\"{o}gren, \emph{Sharp estimates of the {J}acobi heat
  kernel}, Studia Math. \textbf{218} (2013), no.~3, 219--244. \MR{3125127}

\bibitem{Semmes1996a}
Stephen Semmes, \emph{On the nonexistence of bi-{L}ipschitz parameterizations
  and geometric problems about {$A_\infty$}-weights}, Rev. Mat. Iberoamericana
  \textbf{12} (1996), no.~2, 337--410. \MR{1402671}

\bibitem{SunOvsjanikovGuibas2009}
Jian Sun, Maks Ovsjanikov, and Leonidas Guibas, \emph{A concise and provably
  informative multi-scale signature based on heat diffusion}, Computer graphics
  forum, vol.~28, Wiley Online Library, 2009, pp.~1383--1392.

\bibitem{SzegoBook1939}
Gabor Szeg\"{o}, \emph{Orthogonal {P}olynomials}, American Mathematical Society
  Colloquium Publications, Vol. 23, American Mathematical Society, New York,
  1939. \MR{77}

\end{thebibliography}

\providecommand{\bysame}{\leavevmode\hbox to3em{\hrulefill}\thinspace}
\providecommand{\MR}{\relax\ifhmode\unskip\space\fi MR }
\providecommand{\MRhref}[2]{%
  \href{http://www.ams.org/mathscinet-getitem?mr=#1}{#2}
}
\providecommand{\href}[2]{#2}

\end{document}